\documentclass{siamart250211}

\textheight = 21.5cm
\textwidth = 15cm
\setlength{\oddsidemargin}{0.7cm}
\setlength{\evensidemargin}{0.7cm}

\usepackage{microtype}
\usepackage{amssymb}
\usepackage{bm}

\usepackage{epstopdf}
\ifpdf
  \DeclareGraphicsExtensions{.eps,.pdf,.png,.jpg}
\else
  \DeclareGraphicsExtensions{.eps}
\fi
\usepackage{xcolor}
\usepackage{tikz}
\usepackage{float}
\usepackage{graphicx}
\usepackage[caption = false]{subfig}

\usepackage{algorithmic}
\newsiamremark{remark}{Remark}
\newsiamthm{coro}{Corollary}
\newsiamthm{prop}{Proposition}

\newcommand{\newsiamdefn}[2]{
  \theoremstyle{plain}
  \theoremheaderfont{\normalfont\sc}
  \theorembodyfont{\normalfont}
  \theoremseparator{.}
  \theoremsymbol{}
  \newtheorem{#1}{#2}
}
\newsiamdefn{defn}{Definition}
\newsiamdefn{amp}{Assumption}

\usepackage{amsfonts}
\usepackage{mathtools}
\usepackage{nicefrac}
\usepackage{mathrsfs}
\usepackage{nicematrix}
\usepackage{amsopn}

\DeclareMathOperator*{\argmin}{arg\,min}

\def\nn{\nonumber}
\def\R{\mathbb{R}}

\def\d{\,\mathrm{d}}

\def\eps{\epsilon}

\allowdisplaybreaks[4]


\headers{Gradient Flow for GPE}{T. Chu, X. Dai, J. Wu, and A. Zhou}

\title{A gradient flow model for the Gross--Pitaevskii problem: Mathematical and numerical analysis\thanks{Submitted to arXiv.
\funding{This work was supported by the National Natural Science Foundation of China under grant 92270206 and 12571446, the Strategic Priority Research Program of the Chinese Academy of Sciences under grant XDB0640000, and the National Key R \& D Program of China under grants 2019YFA0709600 and 2019YFA0709601.}
}
}

\author{
Tianyang Chu\thanks{SKLMS, Academy of Mathematics and Systems Science, Chinese Academy of Sciences, Beijing 100190, China (\email{tchu@lsec.cc.ac.cn}).}
\and Xiaoying Dai\thanks{SKLMS, Academy of Mathematics and Systems Science, Chinese Academy of Sciences, Beijing 100190, China; and School of Mathematical Sciences, University of Chinese Academy of Sciences, Beijing 100049, China (\email{daixy@lsec.cc.ac.cn}, \email{wujing21@mails.ucas.ac.cn}, \email{azhou@lsec.cc.ac.cn}).}
\and Jing Wu\footnotemark[3]
\and Aihui Zhou\footnotemark[3]
}


\begin{document}

\maketitle

\begin{abstract}
This paper concerns the mathematical and numerical analysis of the $L^2$ normalized gradient flow model for the Gross--Pitaevskii eigenvalue problem, which has been widely used to design the numerical schemes for the computation of the ground state of the Bose--Einstein condensate. We first provide the mathematical analysis for the model, including the well-posedness and the asymptotic behavior of the solution. Then we propose a normalized implicit-explicit fully discrete numerical scheme for the gradient flow model, and give some numerical analysis for the scheme, including the well-posedness and optimal convergence of the approximation. Some numerical experiments are provided to validate the theory.
\end{abstract}

\begin{keywords}
Bose--Einstein condensate, Gross--Pitaevskii problem, gradient flow, asymptotic behavior, convergence
\end{keywords}

\begin{MSCcodes}
65N12, 65N25, 65N30
\end{MSCcodes}

\section{Introduction}

Bose--Einstein condensate (BEC) is an exotic state of matter that occurs in a dilute gas of bosons cooled to temperature extremely close to absolute zero. This phenomenon was first theoretically predicted by Einstein in the 1920s \cite{Einstein1924}, building upon the foundational work of Bose \cite{Bose1924}. Experimental realization of BEC in ultracold bosonic gases was achieved in 1995, as documented in pioneering studies by Anderson et al. \cite{anderson1995observation} and Davis et al. \cite{davis1995bose}. The theoretical framework for understanding BEC is largely based on the Gross--Pitaevskii (GP) theory, independently developed by Gross \cite{gross1961structure} and Pitaevskii \cite{pitaevskii1961vortex} in the 1960s. The GP theory has been widely validated in predicting key properties of BECs and remains a cornerstone of the mathematical model for describing the BEC. In the GP theory, the quantum state of the BEC is given by the solution of the following nonlinear Schr\"odinger equation (also called the time-dependent GP equation): 
\begin{align}\label{equ: NLS in intro}
    i\psi_t = - \Delta \psi + V \psi + \beta |\psi|^2 \psi, \qquad  \text{in} \,\,\, \R^3, \quad t > 0,
\end{align}
with a given initial value $\psi(x,0) = \psi_0(x)$ satisfying $\|\psi_0\|_{L^2(\R^3)} = 1$. The wave solutions to \eqref{equ: NLS in intro} are of the form $\psi(x,t) = \varphi(x)e^{-i\mu t}$. In this sense, $\varphi(x)$ satisfies the following GP eigenvalue problem (also called the time-independent GP equation): find $(\varphi,\mu) \in H^1(\R^3) \times \R$ with $\|\varphi\|_{L^2(\R^3)} = 1$ such that in the weak sense
\begin{equation}\label{equ: GPEVP in intro}
	- \Delta \varphi + V\varphi + \beta |\varphi|^2\varphi = \mu \varphi.
\end{equation}
The ground state of BEC which is a stationary point of \eqref{equ: NLS in intro}, is described as the normalized eigenfunction $\phi$ corresponding to the smallest eigenvalue of \eqref{equ: GPEVP in intro}. Meanwhile, note that \eqref{equ: GPEVP in intro} represents the Euler-Lagrange equation of the GP energy functional which is given by 
\begin{equation}
	E(v):= \int_{\R^3} \left( \frac12 |\nabla v|^2 + \frac12 V |v|^2 + \frac{\beta}{4} |v|^4 \right) \ \text{d}x.
\end{equation}
Therefore, the ground state of a BEC can also be described as the minimizer of the following problem:
\begin{align}\label{equ: minimize problem}
    \varphi \in \argmin_{v \in S} E(v), \qquad S:=\{ v \in H^1(\R^3): \, \|v\|_{L^2(\R^3)} = 1 \}.
\end{align}
For more details about the background of the BEC and the mathematical theory of the GP theory, we refer readers to the review paper \cite{bao2013mathematical}.

Nowadays, the computation of the ground state of BEC is one of the fundamental objects in numerical studies of BEC and there are mainly two approaches to obtain the ground state: solving the nonlinear eigenvalue problem \eqref{equ: GPEVP in intro} and solving the minimization problem \eqref{equ: minimize problem}. The first approach, i.e., directly solving the nonlinear eigenvalue problem \eqref{equ: GPEVP in intro}, usually contains two steps. The first step is the spatial discretization such as the finite difference method, the finite element method, the spectral method, etc. There are some existing works on the error estimates for the spatial discretization, including the priori error analysis \cite{cances2010numerical,chen20111846,henning2025discrete,zhou2003analysis} and the a posteriori analysis \cite{chen20111846,dusson2017posteriori}. The second step is to solve the nonlinear algebraic problem of the form 
\begin{align}\label{equ: nonlinear algebraic problem}
    A(\mathbf{v})\mathbf{v} = \lambda \mathbf{v} \qquad \mathbf{v} \in \R^{N}.
\end{align}
The most common iterative technique for solving \eqref{equ: nonlinear algebraic problem} is the self-consistent field iteration (cf. \cite{ cances2000SCF, cances2021convergence, cances2000on,dion2007ground, upadhyaya2021density}), where a linearized eigenvalue problem is solved in each iteration. A systematic and comprehensive introduction of this approach can be found in a recent review paper \cite[Sections 3 and 4]{henning2025gross}. For the second approach, i.e., solving the problem \eqref{equ: minimize problem}, there are many existing works such as directly minimizing the energy functional in the finite element space \cite{bao2003ground}, the Riemannian optimization method \cite{altmann2022energy, altmann2024riemanniannewton, danaila2017computation}, the preconditioned nonlinear conjugate gradient method \cite{antoine2017efficient}, the regularized Newton method \cite{wu2017regularized}, the $J$-method \cite{altmann2021j, jarlebring2014inverse}, the gradient-flow based methods,  etc. 

In this paper, we study the so-called gradient flow based method, which is widely used to solve \eqref{equ: GPEVP in intro}. The main idea of the gradient flow approach is to introduce an artificial notion of time $t$ and a corresponding time-dependent state $\phi(t)$, then construct a continuous (projected) gradient flow model (an evolution equation with respect to $\phi(t)$), for which $\lim_{t\to\infty}\phi(t)$ is the solution of \eqref{equ: GPEVP in intro}. After that, we can get several methods by discretizing the gradient flow model in time. Roughly speaking, there are mainly two types of the gradient flow model: the $L^2$ projected gradient flow and the $H^1$ projected gradient flow. The $L^2$ projected gradient flow was first introduced in \cite{bao2004computing}, called continuous normalized gradient flow (CNGF), which was used as a mathematical justification for 
the imaginary time evolution method \cite{aftalion2001vortices,cerimele2000numerical,chiofalo2000ground}. Due to the simplicity and efficiency of the gradient flow with discrete normalization (GFDN) and CNGF models \cite{bao2004computing}, based on which lots of numerical methods have been applied to compute the ground state \cite{bao2013mathematical,bao2006efficient,bao2004computing,bao2007mass,liu2021normalized,wang2014projection,zhuang2019efficient}. Convergence of GFDN iteration (cf. \cite[Definition 4.1]{henning2023dependency}) in 1D for focusing nonlinearities ($\beta < 0$) was considered in \cite{faou2018convergence}, and was considered in \cite{henning2023dependency} for higher dimensions with defocusing nonlinearities ($\beta > 0$). Several $H^1$ gradient flows for the GP eigenvalue problem have been proposed in \cite{danaila2010new,henning2020sobolev,  kazemi2010minimizing}. With these $H^1$ Sobolev gradient flow models, combining a forward Euler discretization, several semi-discrete methods have been proposed in \cite{danaila2010new,henning2020sobolev,  kazemi2010minimizing} with numerical analysis in \cite{chen2024convergence,henning2023dependency,henning2020sobolev,zhang2022exponential}. Furthermore, combining with some spatial discretization, fully discrete methods have been proposed in \cite{chen2024fullydiscretizedsobolevgradient,heid2021gradient}. 

The $L^2$-normalized gradient flow, as a continuous model provided to design the numerical schemes, its mathematical properties such as well-posedness and asymptotic behaviors are of importance to guarantee the reasonability of the corresponding numerical schemes. Compared to the $H^1$-normalized gradient flow, there are few works on the mathematical analysis for the $L^2$ gradient flow. With the absence of the potential function, i.e., $V \equiv 0$, for the case of 1D with $\beta<0$, Faou and J\'{e}z\'{e}quel \cite{faou2018convergence} proved the exponential convergence in $L^2$ space under the assumption that the initial value is sufficiently close to the ground state. Antonelli et al. \cite{antonelli2024existence} proved the well-posedness of the $L^2$ gradient flow with general nonlinearity, i.e., $|\varphi|^{2\sigma}\varphi$, in higher dimensions. Different from the $H^1$ case, the mathematical analysis of the $L^2$ gradient flow, especially the well-posedness of the model, requires the theory of PDEs, while the $H^1$ case only depends on the ``ODEs" knowledge.

One of the objectives of this paper is to give a mathematical analysis for the $L^2$ normalized gradient flow of the GP eigenvalue problem. Following the recent work \cite{antonelli2024existence}, we obtain the well-posedness of the $L^2$ normalized gradient flow with $L^{\infty}$ potential. In addition to the well-posedness, similar to the $H^1$ case \cite{henning2020sobolev, kazemi2010minimizing}, under some assumptions on the regularity of the solution, we also get several exponential convergences of the solution to the $L^2$ normalized gradient flow as the time approaches infinity. Another objective of this paper is to consider the numerics of the $L^2$ normalized gradient flow, including the numerical scheme and its convergence analysis. We propose a normalized implicit-explicit fully discrete scheme for the gradient flow model. A similar temporal semi-discrete scheme has been introduced in \cite{liu2021normalized}, combining the spatial Fourier-pseudospectral discretization. Numerical experiments in \cite{liu2021normalized} show that this scheme is very efficient since it only needs to solve a linear elliptic equation with constant coefficients in each iteration. For the convergence analysis, different from \cite{faou2018convergence, henning2023dependency}, which considered the convergence and convergence rate of the iteration solution to the ground state with respect to the iteration numbers, our analysis concentrates on the approximation of the gradient flow model with respect to the mesh size (cf. Theorem \ref{thm: L2 convergence}), and our numerical analysis can be naturally extended to similar methods such as numerical schemes in \cite{liu2021normalized}. One of the difficulties in the convergence analysis is to estimate the error of the normalized step, which is overcome in a geometric view (cf. Lemma \ref{lem: key lemma in geometry}). We obtain also an optimal convergence in $L^{\infty}(0,T;L^2(\Omega))$ and a suboptimal convergence in $L^{\infty}(0,T;H^1(\Omega))$.

The organization of this paper is as follows. In Section \ref{sec: Preliminaries}, we introduce the basic knowledge of the GP eigenvalue problem and its gradient flow. In Section \ref{sec: Mathematical analysis}, we give some mathematical analysis for the $L^2$ gradient flow of the GP eigenvalue problem, including the well-posedness, the exponential convergence of the energy and the exponential convergence of the solution to the ground state in $L^2$ norm, as the time approaches infinity. In Section \ref{sec: Numerical analysis}, we propose a numerical scheme for the $L^2$ gradient flow of the GP eigenvalue problem and carry out the convergence analysis with respect to the mesh size. In Section \ref{sec: numerical experiments}, we provide two numerical experiments to validate our theoretic results obtained in Section \ref{sec: Numerical analysis}, and give a conclusion in Section \ref{sec: conclusion}.

Throughout this paper, the letter $C$ denotes a generic constant that could be different in different occurrences.

\section{The gradient flow model}\label{sec: Preliminaries}

\subsection{The Gross--Pitaevskii problem}\label{sec GP problem}

The ground state of BEC is modeled by the following time-independent GP equation, also called the GP eigenvalue problem: Find $(\varphi, \lambda) \in H^1(\R^3) \times \R$ with  $\|\varphi\|_{L^2(\R^3)} = 1$ such that in the weak sense
\begin{align}\label{equ: TIGPE}
	- \Delta \varphi + V\varphi + \beta |\varphi|^2\varphi = \lambda \varphi,
\end{align}
where $V$ is a real-valued time independent potential function, $\beta$ is a real number whose symbol represents the repulsive (positive)/attractive (negative) interaction. In this paper, we consider the repulsive interaction case, i.e., $\beta \ge 0$, and without loss of generality, we assume $V \ge 0$. The classical Ljusternik--Schnirelman theory \cite{zeidler1985nonlinear} shows that the problem \eqref{equ: TIGPE} has infinitely many eigenvalues $0 < \lambda_1 < \lambda_2 \le \lambda_3 \le \cdots < \infty$ and the ground state $\phi_{\text{GS}}$ is the normalized eigenfunction corresponding to the smallest eigenvalue $\lambda_{\text{GS}}:= \lambda_1$. Moreover, the ground state $\phi_{\text{GS}}$ is positive and unique up to a sign \cite{bao2013mathematical,cances2010numerical}. In addition, if $V$ satisfies $\lim_{|x| \to \infty} V(x) = + \infty$, then $\phi_{\text{GS}}$ decays exponentially fast as $|x| \to \infty$ \cite{bao2013mathematical}.

Note that \eqref{equ: TIGPE} is the Euler-Lagrange equation of the GP energy functional, which is defined as
\begin{align}
    E(v):= \int_{\R^3} \left ( \frac12 |\nabla v|^2 + \frac12 V |v|^2 + \frac{\beta}{4} |v|^4  \right ) \ \text{d}x \qquad v \in H^1(\R^3).
\end{align}
Then the ground state of a BEC can be rephrased in terms of the solution of the following minimization problem: Find $\varphi \in S$,
\begin{align}\label{equ: minimization problem}
    E(\varphi) = \min_{v \in S} E(v).
\end{align}
The mathematical analysis of the problem \eqref{equ: minimization problem}, such as the existence, uniqueness and other properties, can be found in \cite{bao2013mathematical,cances2010numerical}.

\subsection{The \texorpdfstring{{\boldmath $L^2$}}{L2} gradient flow model}

The exponential decay property of the ground state solution mentioned in Section \ref{sec GP problem} suggests that we can consider the GP problem \eqref{equ: TIGPE} on a bounded domain $\Omega \subset \R^3$, together with a homogeneous Dirichlet boundary condition. We adopt the gradient flow approach to calculate the ground state of GP eigenvalue problem, where the ground state solution of GP eigenvalue problem as well as the global minimizer of the problem \eqref{equ: minimization problem} can be viewed as the stationary state of the gradient flow problem. The gradient flow model of the GP problem was first considered in \cite{bao2004computing} as a mathematical justification  for the classical imaginary time method, which has been used in physics literature to calculate the ground state solution of BEC \cite{aftalion2001vortices,cerimele2000numerical,chiofalo2000ground}. We recall that the Fr\'echet derivative of the GP energy functional  is given by 
\begin{equation}
	\langle E'(v), w \rangle = \int_{\Omega} \Big(\nabla v \cdot \nabla w + Vvw + \beta |v|^2 vw\Big) \ \text{d}x \quad \forall\, v,w\in H^1_0(\Omega),
\end{equation}
where the notation $\langle \cdot, \cdot \rangle$ denotes the dual pair between $H^{-1}(\Omega)$ and $H_0^1(\Omega)$.

The imaginary time approach to solve \eqref{equ: minimization problem} can be read as: for a time sequence $0 = t_0 < t_1 < t_2 < \cdots < t_n < \cdots$, 
\begin{align}
    &\phi_t  = - \frac{\delta E(\phi)}{\delta \phi} = \Delta \phi - V\phi - \beta |\phi|^2 \phi, \qquad \text{in}\,\,\, \Omega, \quad t_n < t < t_{n+1}, \quad n \ge 0, \label{equ: ITM1} \\
   & \phi(t_{n+1})  = \frac{\phi(t_{n+1}^{-})}{\|\phi(t_{n+1}^{-})\|_{L^2(\Omega)}}. \label{equ: ITM2}
\end{align}
In fact, \eqref{equ: ITM1} can be viewed as applying the steepest descent method to the energy functional $E(\phi)$ without constraint and then projecting the solution onto the unit sphere. It is mentioned in \cite{bao2004computing} that \eqref{equ: ITM1}--\eqref{equ: ITM2} can be viewed as a first-order splitting method for the following continuous normalized gradient flow (CNGF): 
\begin{align}
    &\phi_t = \Delta \phi - V \phi - \beta |\phi|^2 \phi + \mu[\phi]\phi,  && \text{in}\,\,\, \Omega \times (0,\infty), \label{equ: CNGF1} \\
    &\phi = 0,  && \text{on}\,\,\,\partial\Omega\times (0,\infty),  \\
    &\phi = \phi_0,  && t = 0 \quad \text{with} \quad \|\phi_0\|_{L^2(\Omega)} = 1, \label{equ: CNGF2}
\end{align}
where 
\begin{align}\label{equ: CNGF3}
    \mu[\phi]:= \frac{1}{\|\phi\|_{L^2(\Omega)}^2} \int_{\Omega}\left( |\nabla \phi|^2 + V |\phi|^2 + \beta |\phi|^4\right) \ \text{d}x.
\end{align}
\eqref{equ: CNGF1}--\eqref{equ: CNGF3} is called the $L^2$ gradient flow model of the GP problem,
and its mathematical analysis will be presented in the next section.

\begin{remark}[Projected gradient flow approach]
    The gradient flow model \eqref{equ: CNGF1}--\eqref{equ: CNGF3} can also be derived from the projected gradient flow approach. In the context of projected Sobolev gradient flow, the critical points of the energy $E(\cdot)$ can be identified by constructing appropriate gradient flows of the form 
\begin{equation}\label{equ: 111111111}
	z'(t) = - P_{z,X}(\nabla_X E(z(t))),
\end{equation}
where $\nabla_X E$ is the Sobolev gradient of the energy functional and $P_{z,X}$ is the projection operator. With the choice of $X = L^2(\Omega)$, \eqref{equ: 111111111} coupled with some initial value $\phi(0) \in H_0^1(\Omega) \cap H^2(\Omega)$ is exactly \eqref{equ: CNGF1}--\eqref{equ: CNGF3} \cite[Section 2.2.1]{henning2020sobolev}.
\end{remark}

\section{Mathematical analysis}\label{sec: Mathematical analysis}

In this section, we consider the following nonlinear, parabolic PDEs:
\begin{equation}\label{equ: PDEofL2GF}
\begin{cases}
	\phi_t = \Delta \phi - V\phi - \beta |\phi|^2 \phi + \mu[\phi] \phi, \qquad & \text{in}\,\,\, \Omega \times (0,\infty),   \\
	 \phi = 0, \qquad & \text{on}\,\,\,\partial\Omega\times (0,\infty), \\
		\phi = \phi_0, \qquad & t = 0, \qquad \|\phi_0\|_{L^2(\Omega)} = 1,
		\end{cases}
\end{equation}
where $V \ge 0$ and $\beta \ge 0$ and $\mu[\phi]$ is defined in \eqref{equ: CNGF3}. For simplicity, we assume $\Omega \subset \R^3$ to be a bounded convex domain with smooth boundary. First, we recall the properties of \eqref{equ: PDEofL2GF} (see Theorem 2.5 of \cite{bao2004computing}).
\begin{prop}
    The solution of \eqref{equ: PDEofL2GF} satisfies normalization conservation and energy diminishment, i.e., 
    \begin{align*}
        & \|\phi(\cdot,t)\|_{L^2(\Omega)} = \|\phi_0\|_{L^2(\Omega)} = 1, \qquad t \ge 0, \\
        & \frac{\mathrm{d}}{\mathrm{d}t}E(\phi) = -\|\phi_t(\cdot,t)\|_{L^2(\Omega)}^2 \le 0, \qquad t \ge 0.
    \end{align*}
\end{prop}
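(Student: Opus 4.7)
The plan is to verify both assertions by testing the PDE \eqref{equ: PDEofL2GF} against natural functions (namely $\phi$ itself and $\phi_t$) and exploiting the precise definition of the Lagrange multiplier $\mu[\phi]$.

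For the normalization conservation, I would take the $L^2(\Omega)$ inner product of the first equation of \eqref{equ: PDEofL2GF} with $\phi$. The left-hand side yields $\tfrac{1}{2}\frac{\mathrm{d}}{\mathrm{d}t}\|\phi\|_{L^2(\Omega)}^2$. On the right-hand side, integration by parts on the Laplacian (using the homogeneous Dirichlet condition to kill the boundary term) gives $-\|\nabla\phi\|_{L^2(\Omega)}^2$, and the remaining terms contribute
\begin{equation*}
-\int_\Omega V|\phi|^2\,\mathrm{d}x - \beta\int_\Omega |\phi|^4\,\mathrm{d}x + \mu[\phi]\,\|\phi\|_{L^2(\Omega)}^2.
\end{equation*}
By the very definition \eqref{equ: CNGF3}, $\mu[\phi]\,\|\phi\|_{L^2(\Omega)}^2 = \int_\Omega (|\nabla\phi|^2 + V|\phi|^2 + \beta|\phi|^4)\,\mathrm{d}x$, so the right-hand side vanishes. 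Hence $\|\phi(\cdot,t)\|_{L^2(\Omega)}^2$ is constant in time and equals $\|\phi_0\|_{L^2(\Omega)}^2 = 1$.

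For the energy diminishment, I would differentiate $E(\phi(\cdot,t))$ in time using the Fr\'echet derivative formula recalled in Section \ref{sec: Preliminaries}:
\begin{equation*}
\frac{\mathrm{d}}{\mathrm{d}t} E(\phi) = \langle E'(\phi),\phi_t\rangle = \int_\Omega \bigl(\nabla\phi\cdot\nabla\phi_t + V\phi\phi_t + \beta|\phi|^2\phi\phi_t\bigr)\,\mathrm{d}x.
\end{equation*}
Integrating the gradient term by parts and invoking the PDE to identify $-\Delta\phi + V\phi + \beta|\phi|^2\phi = -\phi_t + \mu[\phi]\phi$, I obtain
\begin{equation*}
\frac{\mathrm{d}}{\mathrm{d}t} E(\phi) = \int_\Omega \bigl(-\phi_t + \mu[\phi]\phi\bigr)\phi_t\,\mathrm{d}x = -\|\phi_t\|_{L^2(\Omega)}^2 + \mu[\phi]\int_\Omega \phi\phi_t\,\mathrm{d}x.
\end{equation*}
The last integral equals $\tfrac{1}{2}\frac{\mathrm{d}}{\mathrm{d}t}\|\phi\|_{L^2(\Omega)}^2$, which vanishes by the normalization conservation just established. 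This leaves $\frac{\mathrm{d}}{\mathrm{d}t}E(\phi) = -\|\phi_t\|_{L^2(\Omega)}^2 \le 0$, as claimed.

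There is essentially no substantive obstacle: the two assertions are structural consequences of the fact that $\mu[\phi]$ is defined as the exact Lagrange multiplier for the unit-norm constraint, so the computation for conservation is closed by design, and the computation for energy decay closes once conservation is used. The only points requiring modest care are the boundary terms in the integration by parts (handled by the Dirichlet condition) and the implicit regularity needed to justify differentiating $\|\phi\|_{L^2(\Omega)}^2$ and $E(\phi)$ in time; for a strong solution of \eqref{equ: PDEofL2GF}, $\phi_t \in L^2(\Omega)$ and $\phi(\cdot,t)\in H^1_0(\Omega)$ render both manipulations legitimate, so I would simply remark that the calculation is performed for sufficiently smooth solutions and extended by a density/approximation argument if needed.
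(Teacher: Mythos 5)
Your proof is correct: both identities follow exactly as you compute, by testing the equation with $\phi$ (where the definition of $\mu[\phi]$ makes the right-hand side vanish) and with $\phi_t$ (where the first identity kills the $\mu[\phi](\phi,\phi_t)_{L^2(\Omega)}$ term). The paper does not prove this proposition itself but simply cites Theorem 2.5 of \cite{bao2004computing}, and your argument is precisely the standard one given there.
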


Let $\{e^{t\Delta}\}_{t > 0}$ denote the semigroup generated by the Laplacian operator on $L^2(\Omega)$. The function $u = e^{t\Delta}u_0$ is the solution of the following equation
\begin{align*}
    \begin{cases}
        u_t = \Delta u, \qquad &\text{in $\Omega \times (0,\infty)$}, \\
        u = 0, \qquad &\text{on $\partial\Omega \times (0,\infty)$}, \\
        u = u_0 \in H_0^1(\Omega), \qquad &t = 0.
    \end{cases}
\end{align*}
We recall the properties of $\{e^{t\Delta}\}_{t > 0}$ \cite{antonelli2024existence,BAILLEUL20163344,Jost2013} that will be used in the latter proof.
\begin{prop}\label{prop: 1}
    For any time independent $f \in L^2(\Omega)$, we have 
    \begin{align*}
        \sup_{t>0} \|e^{t\Delta}f\|_{L^2(\Omega)} \le \|f\|_{L^2(\Omega)}.
    \end{align*}
    And for any $u_0 \in H_0^1(\Omega)$, we have 
    \begin{align*}
        \|e^{t\Delta}u_0\|_{L^2(\Omega)}^2 = \|u_0\|_{L^2(\Omega)}^2 - 2\int_0^t \|\nabla e^{s\Delta}u_0\|_{L^2(\Omega)}^2 \ \mathrm{d}s.
    \end{align*}
\end{prop}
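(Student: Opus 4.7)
The plan is to prove both statements simultaneously via the spectral decomposition of the Dirichlet Laplacian on $\Omega$. Let $\{(\lambda_k, e_k)\}_{k \ge 1}$ denote the eigenvalues and $L^2$-orthonormal eigenfunctions of $-\Delta$ with homogeneous Dirichlet boundary condition; these exist since $\Omega$ is bounded and smooth, and $0 < \lambda_1 \le \lambda_2 \le \cdots \to \infty$. For $f \in L^2(\Omega)$ with expansion $f = \sum_k c_k e_k$, the semigroup acts diagonally, namely $e^{t\Delta}f = \sum_k e^{-\lambda_k t} c_k e_k$, which converges in $L^2(\Omega)$ for every $t \ge 0$.

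The first assertion is then immediate from Parseval's identity combined with the bound $e^{-2\lambda_k t} \le 1$:
\begin{equation*}
\|e^{t\Delta}f\|_{L^2(\Omega)}^2 = \sum_k e^{-2\lambda_k t}|c_k|^2 \le \sum_k |c_k|^2 = \|f\|_{L^2(\Omega)}^2.
\end{equation*}
For the second identity, I would use that $u_0 \in H_0^1(\Omega)$ is equivalent to $\sum_k \lambda_k |c_k|^2 < \infty$, in which case $e^{s\Delta}u_0 \in H_0^1(\Omega)$ for every $s \ge 0$ and $\|\nabla e^{s\Delta}u_0\|_{L^2(\Omega)}^2 = \sum_k \lambda_k e^{-2\lambda_k s}|c_k|^2$ by integration by parts together with Parseval. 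Interchanging sum and integral (justified by Tonelli's theorem, since the integrand is non-negative) yields
\begin{equation*}
2\int_0^t \|\nabla e^{s\Delta}u_0\|_{L^2(\Omega)}^2 \,\mathrm{d}s = \sum_k |c_k|^2 \int_0^t 2\lambda_k e^{-2\lambda_k s}\,\mathrm{d}s = \sum_k \bigl(1-e^{-2\lambda_k t}\bigr)|c_k|^2,
\end{equation*}
which rearranges precisely into $\|u_0\|_{L^2(\Omega)}^2 - \|e^{t\Delta}u_0\|_{L^2(\Omega)}^2$.

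Since the result is entirely classical semigroup theory, no serious obstacle is expected; the only point requiring care is the justification of term-by-term integration and differentiation, which follows from the regularity assumption $u_0 \in H_0^1(\Omega)$ as noted above. An alternative, more PDE-oriented approach would be to test the equation $u_t = \Delta u$ against $u$ directly to obtain $\tfrac{d}{dt}\|u\|_{L^2(\Omega)}^2 = -2\|\nabla u\|_{L^2(\Omega)}^2$ and then integrate in time; this is equally valid provided one first establishes the requisite parabolic regularity, and the first bound can then be extended from $H_0^1(\Omega)$ to $L^2(\Omega)$ via density and the $L^2$-boundedness of $e^{t\Delta}$.
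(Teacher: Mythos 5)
Your argument is correct and complete: the spectral decomposition of the Dirichlet Laplacian on the bounded smooth domain $\Omega$, Parseval's identity, the characterization of $H_0^1(\Omega)$ via $\sum_k \lambda_k |c_k|^2 < \infty$, and Tonelli's theorem for the term-by-term integration together give both assertions with no gaps. The comparison with the paper is somewhat one-sided here, because the paper does not prove Proposition \ref{prop: 1} at all --- it simply ``recalls'' both properties as known facts about the heat semigroup and cites the literature (\cite{antonelli2024existence,BAILLEUL20163344,Jost2013}). Your proof therefore supplies a self-contained justification that the paper omits; the diagonalization route is arguably the cleanest way to do so, since it makes the energy identity an exact computation rather than an a~priori estimate, whereas the alternative you sketch at the end (testing $u_t = \Delta u$ against $u$ and integrating in time) is closer in spirit to how such identities are usually invoked in the parabolic-PDE literature the paper points to, and generalizes more readily to operators without a convenient spectral basis.
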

\begin{prop}\label{prop: 2}
    For any $f \in L^1((0,\infty);L^2(\Omega))$, we have 
    \begin{align*}
        \left\| \int_0^t e^{(t-s) \Delta} f(s)\ \mathrm{d}s \right\|_{L^{\infty}((0,\infty); L^2(\Omega))} \le C \|f\|_{L^1((0,\infty);L^2(\Omega))}.
    \end{align*}
    And for any $f \in L^2((0,\infty);L^2(\Omega))$, we have 
    \begin{align*}
        \left\|\nabla \int_0^t e^{(t-s) \Delta} f(s)\ \mathrm{d}s \right\|_{L^{\infty}((0,\infty); L^2(\Omega))} \le C \|f\|_{L^2((0,\infty);L^2(\Omega))}.
    \end{align*}
\end{prop}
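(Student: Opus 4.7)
The plan is to handle the two estimates by different mechanisms: the first follows from a direct Minkowski-type bound combined with the contractivity already recorded in Proposition \ref{prop: 1}, while the second requires an energy argument exploiting the fact that the convolution in $s$ is a Duhamel-type formula for an inhomogeneous heat equation.

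For the first inequality, I would apply Minkowski's integral inequality to move the $L^2(\Omega)$ norm inside the $s$-integral, obtaining
\begin{equation*}
\left\| \int_0^t e^{(t-s)\Delta} f(s)\,\mathrm{d}s \right\|_{L^2(\Omega)} \le \int_0^t \|e^{(t-s)\Delta} f(s)\|_{L^2(\Omega)}\,\mathrm{d}s.
\end{equation*}
The contractivity statement from Proposition \ref{prop: 1} (applied at each fixed $s$ with $f(s)$ playing the role of the time-independent datum) then yields $\|e^{(t-s)\Delta} f(s)\|_{L^2(\Omega)} \le \|f(s)\|_{L^2(\Omega)}$, and integrating in $s$ and taking $\sup_{t>0}$ gives the claim with $C = 1$.

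For the second inequality, set $u(t) := \int_0^t e^{(t-s)\Delta} f(s)\,\mathrm{d}s$. By Duhamel's principle, $u$ solves the inhomogeneous heat equation $u_t - \Delta u = f$ on $\Omega \times (0,\infty)$ with homogeneous Dirichlet boundary data and $u(0) = 0$. I would then test this equation against $u_t$ in $L^2(\Omega)$, using $\langle -\Delta u, u_t \rangle = \tfrac12 \frac{\mathrm{d}}{\mathrm{d}t}\|\nabla u\|_{L^2(\Omega)}^2$ (justified by the zero boundary condition) and Young's inequality $\langle f, u_t\rangle \le \tfrac12 \|f\|_{L^2(\Omega)}^2 + \tfrac12 \|u_t\|_{L^2(\Omega)}^2$ to obtain
\begin{equation*}
\tfrac12 \|u_t\|_{L^2(\Omega)}^2 + \tfrac12 \tfrac{\mathrm{d}}{\mathrm{d}t}\|\nabla u\|_{L^2(\Omega)}^2 \le \tfrac12 \|f\|_{L^2(\Omega)}^2.
\end{equation*}
Integrating from $0$ to $t$, using $u(0) = 0$, and dropping the nonnegative $\|u_t\|^2$ term gives $\|\nabla u(t)\|_{L^2(\Omega)}^2 \le \|f\|_{L^2((0,\infty);L^2(\Omega))}^2$ uniformly in $t$, which is precisely the desired bound.

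The main obstacle I anticipate is purely a regularity justification: testing the equation against $u_t$ presupposes $u_t \in L^2_t L^2_x$ and $u \in L^2_t H^1_0$, which is not automatic from $f \in L^2((0,\infty); L^2(\Omega))$ alone at the level of pointwise time values. I would handle this by first proving the estimate on smooth inputs (for instance, $f \in C^\infty_c((0,\infty) \times \Omega)$ or by spectrally truncating $f$ using the Dirichlet Laplacian eigenbasis), where all manipulations are rigorous, and then passing to the limit by density, since both sides of the inequality are continuous in $f$ with respect to the $L^2((0,\infty); L^2(\Omega))$ topology. An equivalent spectral proof via $\|\nabla u(t)\|^2 = \sum_k \lambda_k |\int_0^t e^{-\lambda_k(t-s)}\hat f_k(s)\mathrm{d}s|^2$ together with Cauchy--Schwarz and $\int_0^t e^{-2\lambda_k(t-s)}\mathrm{d}s \le \tfrac{1}{2\lambda_k}$ would also work and sidesteps the regularity question entirely; I would mention it as an alternative.
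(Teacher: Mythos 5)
Your argument is correct. Note that the paper does not actually prove Proposition \ref{prop: 2}; it merely recalls these semigroup estimates with citations to the literature, so there is no in-paper proof to compare against. Your route is the standard one those references use: the first bound is just the triangle inequality for the Bochner integral (Minkowski) combined with the $L^2$ contractivity of $e^{t\Delta}$ already recorded in Proposition \ref{prop: 1}, and the second is the elementary maximal-regularity-type energy estimate obtained by testing the inhomogeneous Dirichlet heat equation $u_t-\Delta u=f$, $u(0)=0$, against $u_t$. You are also right to flag that the formal testing step needs justification for general $f\in L^2((0,\infty);L^2(\Omega))$; either of the two fixes you propose works, and the spectral version via
\begin{equation*}
\|\nabla u(t)\|_{L^2(\Omega)}^2=\sum_k \lambda_k\Bigl|\int_0^t e^{-\lambda_k(t-s)}\hat f_k(s)\,\mathrm{d}s\Bigr|^2\le \sum_k \lambda_k\cdot\frac{1}{2\lambda_k}\int_0^t|\hat f_k(s)|^2\,\mathrm{d}s=\tfrac12\|f\|_{L^2((0,t);L^2(\Omega))}^2
\end{equation*}
is fully rigorous with no density argument needed and even gives the explicit constant $C=1/\sqrt{2}$. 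No gaps.
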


We then carry out the mathematical analysis of \eqref{equ: PDEofL2GF}, including the well-posedness and the asymptotic behaviors.

\subsection{Well-posedness and global convergence}

Following \cite{antonelli2024existence}, we have
\begin{theorem}\label{thm: wellposedness of model problem}
	Let $V \in L^{\infty}(\Omega)$ with $V \ge 0$ and $\beta \in \R$ be nonnegative. If the initial value $\phi_0 \in H_0^1(\Omega)\cap H^2(\Omega)$ with $\|\phi_0\|_{L^2(\Omega)}=1$, then there exists a unique global solution $\phi$ satisfying \eqref{equ: PDEofL2GF} with regularity
	\begin{equation*}
		\phi \in C([0,\infty); H^2(\Omega) \cap H_0^1(\Omega)), \qquad \phi_t \in C([0,\infty); L^2(\Omega) ).
	\end{equation*}
	Moreover, if $\phi_0 \ge 0$, then $\phi(t)$ converges to $\phi_{\text{GS}}$ strongly in $H_0^1(\Omega)$ as $t \to \infty$.
\end{theorem}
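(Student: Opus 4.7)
The plan is to follow the semigroup/Duhamel strategy of \cite{antonelli2024existence} in three stages. For local existence, I would recast \eqref{equ: PDEofL2GF} in mild form,
\begin{equation*}
\phi(t) = e^{t\Delta}\phi_0 + \int_0^t e^{(t-s)\Delta}\!\left(-V\phi - \beta|\phi|^2\phi + \mu[\phi]\phi\right)\!(s)\,\d s,
\end{equation*}
and apply the Banach contraction principle on a ball in $C([0,\tau];H_0^1(\Omega))$ for sufficiently small $\tau$. Propositions \ref{prop: 1}--\ref{prop: 2} control the semigroup terms; the hypothesis $V\in L^{\infty}(\Omega)$ gives a linear bound on the potential term; the Sobolev embedding $H_0^1(\Omega)\hookrightarrow L^6(\Omega)$ in three dimensions makes $\phi\mapsto|\phi|^2\phi$ locally Lipschitz from $H_0^1$ into $L^2$; and the Rayleigh-type functional $\mu[\phi]$ is locally Lipschitz on $H_0^1$-bounded subsets of the unit $L^2$ sphere. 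These ingredients yield a unique local mild solution in $C([0,\tau]; H_0^1(\Omega))$.

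Next I would globalize and lift the regularity. Mass conservation $\|\phi(t)\|_{L^2(\Omega)}=1$ together with the dissipation identity $\tfrac{d}{dt}E(\phi) = -\|\phi_t\|_{L^2(\Omega)}^2 \le 0$ yields $\tfrac12\|\nabla\phi(t)\|_{L^2(\Omega)}^2 \le E(\phi_0)$ uniformly in $t$, so the local $H_0^1$ solution extends to $[0,\infty)$ by standard continuation. To reach $H^2$, I would differentiate the equation in time (or test by $-\Delta\phi_t$) and apply a Grönwall argument to obtain $\phi_t \in L^{\infty}_{\text{loc}}((0,\infty);L^2(\Omega))$. Then reading \eqref{equ: PDEofL2GF} pointwise in $t$ as the elliptic problem
\begin{equation*}
-\Delta \phi = -\phi_t - V\phi - \beta|\phi|^2\phi + \mu[\phi]\phi,
\end{equation*}
each term on the right lies in $L^2(\Omega)$ (using $V\in L^{\infty}$, $H_0^1\hookrightarrow L^6$ so $|\phi|^2\phi\in L^2$, and the uniform bound on $\mu[\phi]$), so elliptic regularity on the smooth convex $\Omega$ gives $\phi(t)\in H^2(\Omega)\cap H_0^1(\Omega)$, and a standard bootstrap provides the continuity in $t$ stated in the theorem.

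For the asymptotic claim, integrating the dissipation identity yields $\int_0^\infty \|\phi_t(\cdot,t)\|_{L^2(\Omega)}^2\,\d t \le E(\phi_0) - E(\phi_{\text{GS}}) < \infty$, so I can extract $t_n\to\infty$ with $\phi_t(\cdot,t_n)\to 0$ in $L^2(\Omega)$; the uniform $H^2$ bound then yields a subsequence along which $\phi(\cdot,t_n)\to \phi_{\infty}$ strongly in $H_0^1(\Omega)$, and passing to the limit in \eqref{equ: PDEofL2GF} shows $\phi_{\infty}$ satisfies \eqref{equ: TIGPE} with $\|\phi_{\infty}\|_{L^2(\Omega)}=1$. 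The hard part will be identifying $\phi_{\infty} = \phi_{\text{GS}}$ and promoting subsequential convergence to full convergence. For the identification, I would prove positivity preservation by testing the PDE with $\phi^{-}:=\min(\phi,0)$ and exploiting $V,\beta\ge 0$ together with the identity $\phi\,\phi^{-} = (\phi^{-})^2$ to derive $\tfrac{d}{dt}\|\phi^{-}\|_{L^2(\Omega)}^2 \le 2\mu[\phi]\,\|\phi^{-}\|_{L^2(\Omega)}^2$, so Grönwall forces $\phi^{-}\equiv 0$ whenever $\phi_0\ge 0$; the uniqueness of the positive normalized ground state then identifies $\phi_{\infty} = \phi_{\text{GS}}$. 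For the full convergence, I would use that $E(\phi(t))$ is monotone decreasing while every subsequential limit equals $\phi_{\text{GS}}$, so $E(\phi(t))\to E(\phi_{\text{GS}})$, and combined with the uniform $H^1$-compactness this upgrades to strong $H_0^1$ convergence of the whole trajectory.
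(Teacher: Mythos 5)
Your proposal is correct in its overall architecture and matches the paper's: Duhamel formulation plus Banach contraction for local existence, energy dissipation for globalization, and subsequence extraction plus positivity plus energy monotonicity for the asymptotics. Two of your sub-arguments, however, take genuinely different routes. First, you establish nonnegativity by testing with $\phi^{-}=\min(\phi,0)$ and running Gr\"onwall on $\|\phi^{-}\|_{L^2(\Omega)}^2$ (using that $\mu[\phi]$ is uniformly bounded); the paper instead invokes the parabolic Maximum Principle. Your route is more self-contained and avoids regularity caveats in applying the Maximum Principle to a semilinear equation, at the modest cost of the (correct) computation you sketch. Second, you upgrade weak to strong subsequential $H_0^1$ convergence via an $H^2$ bound and compact embedding, whereas the paper deduces convergence of $\|\nabla\phi(t_n)\|_{L^2(\Omega)}$ from the convergence of the Rayleigh quotients $\mu[\phi(t_n)]\to\mu[\phi_{\infty}]$ together with strong $L^4$ convergence. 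Your variant works, but note that the \emph{uniform} $H^2$ bound is not available at that stage of the argument (the paper only obtains it later, under Assumption~3.1); what you actually have, and should say, is an $H^2$ bound \emph{along the chosen subsequence} $t_n$, read off from the elliptic equation since $\phi_t(\cdot,t_n)\to 0$ in $L^2(\Omega)$ while $V\phi$, $\beta|\phi|^2\phi$ and $\mu[\phi]\phi$ stay bounded in $L^2(\Omega)$. One further imprecision to repair in the local existence step: the contraction cannot be run on a plain ball of $C([0,\tau];H_0^1(\Omega))$, because $\mu[u]$ contains $\|u\|_{L^2(\Omega)}^{-2}$ and is neither defined nor Lipschitz near $u=0$; the iteration set must also impose a lower bound $\inf_{t}\|u(t)\|_{L^2(\Omega)}\ge N/2$, and one must verify the Duhamel map preserves this bound (the paper does exactly this, following \cite{antonelli2024existence}). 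Your appeal to "$H_0^1$-bounded subsets of the unit $L^2$ sphere" shows you see the issue, but the iterates are not on the sphere, so the constraint has to be built into the fixed-point set explicitly. Finally, your identification of $\phi_{\infty}$ with $\phi_{\text{GS}}$ needs the precise fact that any nonnegative $L^2$-normalized eigenfunction is the ground state (as in \cite[Lemma 5.3]{henning2020sobolev}), not merely uniqueness of the positive ground state; with that reading, your closing argument for full-trajectory convergence (energy monotonicity, weak lower semicontinuity, sign selection by positivity, then norm convergence) coincides with the paper's Step~3.
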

\begin{proof}
The proof will be divided into two parts.

\underline{\textit{Part I (Global well-posedness)}} We first prove that there exist $T^*>0$ and a unique local solution $\phi\in C([0,T^*],H_0^1(\Omega))$. Let $M>0$, $N > 0$, $T > 0$ be some constants that are to be determined. Let 
\begin{align*}
    \Lambda = \Big\{ u\in C\big([0,T];H_0^1({\Omega})\big):\, u(0) = \phi_0, \, \Vert u \Vert_{L^\infty\big([0,T];H^1({\Omega})\big)}\leq M,\,\inf_{t\in[0,T]}\Vert u \Vert_{L^2(\Omega)}\geq\frac{N}{2}\Big\}
\end{align*}
be the set equipped with the distance
\begin{align*}
    \text{d}(u,v)=\Vert u-v \Vert_{L^\infty\big([0,T];H^1({\Omega})\big)} \qquad \forall\, u,v \in \Lambda.
\end{align*}
Since $V\in L^\infty(\Omega)$, by continuous embedding $H^1(\Omega)\hookrightarrow L^4(\Omega)$ and H\"older's inequality, we conclude that there exists a constant $C_1(M,N)>0$ such that 
\begin{equation}\label{mues}
    \Vert\mu[u]\Vert_{L^\infty[0,T]}\le \frac{C}{N^2}\left(\Vert u\Vert_{L^\infty([0,T];H^1(\Omega))}^2+\Vert u\Vert_{L^\infty([0,T];H^1(\Omega))}^{4}\right)\leq C_1,\hspace{1em}u\in \Lambda.
\end{equation}
Note that for all $u,v \in \Lambda$
\begin{align*}
    \left| \mu[u] - \mu[v] \right| & \le \frac{C}{N^2} \int_{\Omega} \Big(  |\nabla u + \nabla  v||\nabla u - \nabla  v| + V(x)|u+v||u-v| + \beta |u^4 - v^4| \Big) \ \text{d}x   \\
    & \quad + \left| \|u\|_{L^2(\Omega)}^{-2} -  \|v\|_{L^2(\Omega)}^{-2}\right||\mu[u]| \\
    & \le C_2\|u-v\|_{H^1(\Omega)},
\end{align*}
here $C_2 = C_2(M,N)$, which leads to 
\begin{equation}\label{61}
    \Vert \mu[u]-\mu[v]\Vert_{L^\infty[0,T]}\leq C_2 \text{d}(u,v),\hspace{1em}u,v\in \Lambda.
\end{equation}
    
For $u \in \Lambda$, define $\mathcal{F}(u)(t)$ as
\begin{equation}
    \mathcal{F}(u)(t)=e^{t\Delta}\phi_0+\int_0^te^{(t-s)\Delta}(\mu[u]u-\beta|u|^{2}u-Vu) \, \text{d}s,
\end{equation}
obviously, $\mathcal{F}(u)(0) = \phi_0$. Combining Properties \ref{prop: 1} and \ref{prop: 2}, embedding $H^1(\Omega)\hookrightarrow L^6(\Omega)$ and H\"older's inequality, we obtain
\begin{align}
    &\|\mathcal{F}(u)\|_{L^\infty([0,T];L^2(\Omega))} \label{f1} \\
    & \le C \left( \|\phi_0\|_{L^2(\Omega)} + \| \mu[u]u-\beta|u|^{2}u-Vu \|_{L^{1}([0,T];L^2(\Omega))} \right) \nn \\
    & \le C \left( \|\phi_0\|_{L^2(\Omega)} + T\| \mu[u]u-\beta|u|^{2}u-Vu \|_{L^{\infty}([0,T];L^2(\Omega))} \right) \nn \\
    & \le C \Big(\|\phi_0\|_{L^2(\Omega)} + T (\|\mu[u]\|_{L^{\infty}[0,T]} + \|V\|_{L^{\infty}(\Omega)})\|u\|_{L^{\infty}\big([0,T];L^2(\Omega)\big)} + CT\|u\|_{L^{\infty}([0,T];H^1(\Omega))}^3  \Big) \nn \\
    & \le C \|\phi_0\|_{L^2(\Omega)} + C_3(M,N)T, \nn
\end{align}
and 
\begin{align}
     \|\nabla \mathcal{F}(u)\|_{L^\infty([0,T];L^2(\Omega))}  &\le C \left( \|\phi_0\|_{H^1(\Omega)} +  \| \mu[u]u-\beta|u|^{2}u-Vu \|_{L^{2}([0,T];L^2(\Omega))} \right) \label{f2} \\
    & \le C \left( \|\phi_0\|_{H^1(\Omega)} + T^{\frac12} \| \mu[u]u-\beta|u|^{2}u-Vu \|_{L^{\infty}([0,T];L^2(\Omega))} \right) \nn \\
    & \le  C \|\phi_0\|_{H^1(\Omega)} + C_4(M,N)T^{\frac12}. \nn
\end{align}
    
By using \eqref{mues}, \eqref{f1} and \eqref{f2}, there exist  constants $C^* > 0$ and $C_5 = C_5(M,N)>0$  such that
 \begin{equation}\label{equ: f5}
    \Vert \mathcal{F}(u)\Vert_{ L^\infty([0,T],H^1(\Omega))}\leq C^*\Vert \phi_0\Vert_{H^1(\Omega)}+C_5\Big(T+T^{\frac{1}{2}}\Big).
\end{equation}
Meanwhile, we get 
\begin{align}
    \|\mathcal{F}(u)\|_{L^2(\Omega)}& \ge \|e^{t\Delta}\phi_0\|_{L^2(\Omega)} - \left\|\int_0^te^{(t-s)\Delta}(\mu[u]u-\beta|u|^{2}u-Vu) \, \text{d}s \right\|_{L^{\infty}([0,T];L^2(\Omega))} \\
    & \ge \left(\|\phi_0\|_{L^2(\Omega)}^2 - 2\int_0^t \|\nabla e^{s\Delta}\phi_0\|_{L^2(\Omega)}^2 \ \text{d}s  \right)^{\frac12} - CT \nn \\
    &\ge  \left(\|\phi_0\|_{L^2(\Omega)}^2 - C T \|\nabla \phi_0\|_{L^2(\Omega)}^2  \right)^{\frac12} - CT, \nn
\end{align}
which implies 
\begin{align}\label{equ: f6}
    \inf_{t \in [0,T]} \|\mathcal{F}(u)\|_{L^2(\Omega)} \ge \|\phi_0\|_{L^2(\Omega)} - C_6(M,N)(T + T^{\frac12}).
\end{align}
For $u,v \in \Lambda$ 
\begin{align}
    & \|\mathcal{F}(u) - \mathcal{F}(v)\|_{L^\infty([0,T];L^2(\Omega))} \label{equ: f3}\\
    & \le C \| \mu[u]u - \mu[v]v-\beta(|u|^{2}u - |v|^{2}v)-V(u -v)\|_{L^{1}([0,T];L^2(\Omega))} \nn \\
    & \le C_7(M,N) T \text{d}(u,v), \nn
\end{align}
and 
\begin{align}
    & \|\nabla\mathcal{F}(u) - \nabla\mathcal{F}(v)\|_{L^\infty([0,T];L^2(\Omega))} \label{equ: f4}  \\
    & \le C \| \mu[u]u - \mu[v]v-\beta(|u|^{2}u - |v|^{2}v)-V(u -v)\|_{L^{2}([0,T];L^2(\Omega))} \nn \\
    & \le C_8(M,N) T^{\frac12} \text{d}(u,v). \nn
\end{align}
Combining \eqref{equ: f3} and \eqref{equ: f4}, we obtain
\begin{align}\label{equ: f7}
    \|\mathcal{F}(u) - \mathcal{F}(v)\|_{L^\infty([0,T];H^1(\Omega))} \le C_9(M,N)(T + T^{\frac12}) \text{d}(u,v).
\end{align}

First, we choose $M = 2 C^* \|\phi_0\|_{H^1(\Omega)}$ and $N = \|\phi_0\|_{L^2(\Omega)}$ in \eqref{equ: f5}, \eqref{equ: f6} and \eqref{equ: f7}, which leads to 
\begin{align}
    &\Vert \mathcal{F}(u)\Vert_{ L^\infty([0,T],H^1(\Omega))}\leq \frac{M}{2}+C_1'\Big(T+T^{\frac{1}{2}}\Big) \qquad \forall\, u \in \Lambda, \\
    &\inf_{t \in [0,T]} \|\mathcal{F}(u)\|_{L^2(\Omega)} \ge N - C_2'(T + T^{\frac12}) \qquad \forall\, u \in \Lambda, \\
    & \|\mathcal{F}(u) - \mathcal{F}(v)\|_{L^\infty([0,T];H^1(\Omega))} \le C_3'(T + T^{\frac12}) \text{d}(u,v) \qquad \forall\, u,v \in \Lambda.
\end{align}
Then, we choose $T^*$ sufficiently small such that, for $0 \le T \le T^*$
\begin{align*}
    C_1'\Big(T+T^{\frac{1}{2}}\Big)  \le \frac{M}{2}, \quad
    C_2'(T + T^{\frac12}) \le \frac{N}{2}, \quad 
    \text{and} \quad C_3'(T + T^{\frac12})  \le \frac12
\end{align*}
which leads to for all $u \in \Lambda$
\begin{align}
    \|\mathcal{F}(u)\Vert_{ L^\infty([0,T],H^1(\Omega))}\leq M, \qquad \inf_{t \in [0,T]} \|\mathcal{F}(u)\|_{L^2(\Omega)} \ge \frac{N}{2}, \label{equ: f8}
\end{align}
and for all $u,v \in \Lambda$
\begin{align}
    \|\mathcal{F}(u) - \mathcal{F}(v)\|_{L^\infty([0,T];H^1(\Omega))} \le \frac12 \text{d}(u,v). \label{equ: f9}
\end{align}
Inequalities \eqref{equ: f8} and \eqref{equ: f9} show that $\mathcal{F}(\cdot)$ is a contraction map on $\Lambda$ (the continuity of $\mathcal{F}(\cdot)$ with respect to $t$ can be easily seen from the definition of $\mathcal{F}(\cdot)$.) with $T = T^*$. By Banach Fixed-Point Theorem, there exists a unique $\phi \in \Lambda$ such that $\mathcal{F}(\phi) = \phi$. Then the local well-posedness of \eqref{equ: PDEofL2GF} directly follows from \cite[Proposition 2.2]{antonelli2024existence}.  
    
Let $\phi$ be the local solution given above. Since $\beta$ and $V$ are non-negative, we obtain from the energy diminishing property that 
\begin{equation}
    \Vert\nabla \phi(t)\Vert_{L^2(\Omega)}^2\leq2E(\phi(t))\leq2E(\phi_0), \hspace{1em}\forall\, t\in [0,T],
\end{equation}
which shows that the $H^1$-norm of $\phi(t)$ is bounded with respect to $t$. Thus we can extend the local solution to a global solution without losing uniqueness. The regularity of $\phi(t)$ directly follows from $\phi_0 \in H_0^1(\Omega) \cap H^2(\Omega)$ and \cite[Propositions 2.2 and 3.4]{antonelli2024existence}.

\underline{\textit{Part II (Convergence to the ground state})} Following \cite[Proposition 4.1, Corollary 4.2, and Theorem 4.3]{antonelli2024existence}, we are able to prove the convergence to the ground state as $t \to \infty$, under the condition of $\phi_0\geq 0$. The proof will be divided into three steps.

\textit{Step 1 (Sequential weak convergence)} The energy decay property and Proposition \ref{prop: 1} show that
\begin{align*}
    \phi_t\in L^2([0,\infty);L^2(\Omega)), \quad 
    \sup_{t>0}|\mu[\phi]|\leq C, \quad 
    \text{and} \quad \|\phi\|_{L^\infty([0,\infty);H^1(\Omega))} \le C.
\end{align*}
As a result, there exists a sequence $\{ t_n\}_{n\in N}$ satisfying $t_n\to \infty $ as $n\to \infty$, $\phi_{\infty} \in H_0^1(\Omega)$, and $\mu_{\infty} \in \R$ such that
\begin{align*}
    \begin{cases}
        \phi(t_n)\rightharpoonup \phi_{\infty} \qquad\text{in} \ H_0^1(\Omega), \\
        \phi_t(t_n) \to 0  \qquad\text{in} \ L^2(\Omega), \\
        \mu[\phi(t_n)]  \to \mu_{\infty} \qquad \text{in} \ \R.
    \end{cases}
\end{align*}
Then under the weak topology of $H^{-1}$, 
\begin{align*}
    \phi_t(t_n)& - \Delta\phi(t_n) + V\phi(t_n) + \beta|\phi(t_n)|^2\phi(t_n) - \mu[\phi(t_n)]\phi(t_n) \\
    & \rightharpoonup - \Delta\phi_{\infty} + V\phi_{\infty} + \beta|\phi_{\infty}|^2\phi_{\infty} - \mu_{\infty}\phi_{\infty}
\end{align*}
as $n \to \infty$. This implies in $H^{-1}$ sense, 
\begin{align}\label{equ: eq1 in Thm}
    - \Delta\phi_{\infty} + V\phi_{\infty} + \beta|\phi_{\infty}|^2\phi_{\infty} - \mu_{\infty}\phi_{\infty} = 0.
\end{align}
Weak convergence $\phi(t_n) \rightharpoonup \phi_{\infty}$ in $H_0^1$ implies $\|\phi_{\infty}\|_{L^2(\Omega)} = 1$. \eqref{equ: eq1 in Thm} shows that $\phi_{\infty}$ is a solution of the GP eigenvalue problem.

\textit{Step 2 (Sequential strong convergence)} Form \eqref{equ: eq1 in Thm}, we conclude that $\mu[\phi_{\infty}] = \mu_{\infty}$ and this leads to $\mu[\phi(t_n)] \to \mu[\phi_{\infty}]$ as $n \to \infty$, which is 
\begin{align*}
    \int_{\Omega} \Big( |\nabla\phi(t_n)|^2 + V|\phi(t_n)|^2 + \beta |\phi(t_n)|^4 \Big) \ \text{d}x \to \int_{\Omega} \Big( |\nabla\phi_{\infty}|^2 + V|\phi_{\infty}|^2 + \beta |\phi_{\infty}|^4 \Big) \ \text{d}x
\end{align*}
when $n \to \infty$. Note that weak convergence in $H_0^1(\Omega)$ implies strong convergence in $L^4(\Omega)$, we have 
\begin{align*}
    \int_{\Omega} |\nabla\phi(t_n)|^2  \ \text{d}x \to \int_{\Omega} |\nabla\phi_{\infty}|^2  \ \text{d}x,
\end{align*}
which together with the weak convergence, we get $\phi(t_n) \to \phi_{\infty}$ strongly in $H_0^1(\Omega)$ as $n \to \infty$.

\textit{Step 3 (Convergence to the ground state)} If the initial value satisfies $\phi_0 \ge 0$, since $V(x) \ge 0$ and $\beta \ge 0$, by the Maximum Principle, we have $\phi(t_n) \ge 0$. This implies $\phi_{\infty} \ge 0$. By \cite[Lemma 5.3]{henning2020sobolev}, we obtain $\phi_{\infty} = \phi_{\text{GS}}$. The strong convergence of $\{\phi(t_n)\}_{n \in \mathbb{N}}$ and the energy decay property show that 
\begin{equation*}
    E[\phi(t)]\searrow  E[\phi_{\text{GS}}],\qquad \text{as}\ \ t\to\infty.
\end{equation*}
To prove $\phi(t) \to \phi_{\text{GS}}$ in $H_0^1(\Omega)$ as $t \to \infty$, we only need to prove that for every sequence $\{ \tilde{t}_k \}_{k \in \mathbb{N}}$ with $\tilde{t}_k \to \infty$ as $k \to \infty$, there exists a subsequence $\{ \tilde{t}_{n_k} \}_{k \in \mathbb{N}}$ such that 
\begin{align}
    \phi(\tilde{t}_{n_k}) \to \phi_{\text{GS}} \qquad \text{when} \quad k \to \infty.
\end{align}

Boundedness of $\{ \phi(\tilde{t}_k) \}_{k \in \mathbb{N}}$ implies there exist a subsequence $\{ \phi(\tilde{t}_{n_k}) \}_{k \in \mathbb{N}}$ and $\tilde{\phi}_{\infty} \in H_0^1(\Omega)$ such that $\phi(\tilde{t}_{n_k}) \rightharpoonup \tilde{\phi}_{\infty}$ weakly in $H_0^1(\Omega)$ as $k \to \infty$. and hence $\|\tilde{\phi}_{\infty}\|_{L^2(\Omega)} = 1$. By the lower semi-continuity of $E[\cdot]$, we have 
\begin{align*}
    E[\tilde{\phi}_{\infty}] \le \liminf_{k \to \infty} E[\phi(\tilde{t}_{n_k})] = E[\phi_{\text{GS}}].
\end{align*}
From the above, we immediately get $\tilde{\phi}_{\infty} = \phi_{\text{GS}}$ or $\tilde{\phi}_{\infty} = -\phi_{\text{GS}}$. Again by the Maximum Principle, we have $\phi(\tilde{t}_{n_k}) \ge 0$, and hence $\tilde{\phi}_{\infty} \ge 0$, which is $\tilde{\phi}_{\infty} = \phi_{\text{GS}}$. Similar to \textit{Step 2}, we obtain $\phi(\tilde{t}_{n_k}) \to \phi_{\text{GS}}$ strongly in $H_0^1(\Omega)$ as $k \to \infty$, which completes the proof.
\end{proof}

\begin{remark}\label{rem: high regularity}
	Under the same assumption in Theorem \ref{thm: wellposedness of model problem}, following the regularity theory of elliptic and parabolic PDEs \cite{evans2010partial,gilbarg2001elliptic}, we have that if $V \in H^1(\Omega) \cap L^{\infty}(\Omega)$ and the initial value further satisfies $\Delta \phi_0 \in H_0^1(\Omega)$, then 
	\begin{equation*}
			\phi \in C([0,\infty); H^2(\Omega) \cap H_0^1(\Omega) ), \qquad \phi_t\in C([0,\infty);  H_0^1(\Omega) ).
		\end{equation*}
		Moreover, if $V(x) \in H^2(\Omega)$ and $\Delta \phi_0 \in H^2(\Omega) \cap H_0^1(\Omega)$, then
		\begin{equation*}
		\phi \in C^1([0,\infty); H^2(\Omega) \cap H_0^1(\Omega)), \qquad \phi_{tt} \in C([0,\infty); L^2(\Omega) ).
	\end{equation*}
\end{remark}

\subsection{Convergence rate}

In this subsection, we will discuss the convergence rate of the solution of \eqref{equ: PDEofL2GF} to the ground state as $t \to \infty$. For simplicity, we make the following assumption.
\begin{amp}\label{amp: amp1}
	The solution of \eqref{equ: PDEofL2GF} satisfies $\phi \in C^1([0,\infty); H_0^1(\Omega)) \cap C([0,\infty); H^2(\Omega))$, and $\phi(t)$ converges to $\phi_{\text{GS}}$ strongly in $H_0^1(\Omega)$ as $t \to \infty$.
\end{amp}
\begin{remark}
    By Remark \ref{rem: high regularity}, Assumption \ref{amp: amp1} is reasonable for some potential function $V$ and initial value $\phi_0$ satisfying some regularity requirements.
\end{remark}

Our discussion starts from the following lemma.
\begin{lemma}\label{lem: key lemma}
Denote $g(t) = \frac12\|\phi_t(\cdot,t) \|_{L^2(\Omega)}^2$, then for any $\epsilon \in (0, \lambda_2 - \lambda_{\mathrm{GS}})$, there exists $T_{\epsilon} > 0$ such that 
	\begin{equation}\label{equ: key inequality}
		g'(t) \le - 2(\lambda_2 - \lambda_{\text{GS}} - \epsilon) g(t) \qquad t \ge T_{\epsilon},
	\end{equation}
	where $\lambda_2$ is the second eigenvalue of the following linear eigenvalue problem: 
	\begin{equation*}
		- \Delta u + V u + \beta |\phi_{\text{GS}}|^2 u = \lambda u.
	\end{equation*}
\end{lemma}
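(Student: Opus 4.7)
The plan is to reduce the claim to a differential inequality for $g(t)$ in which the linearized operator $L := -\Delta + V + \beta\phi_{\text{GS}}^2$ around the ground state appears, and then to exploit its spectral gap $\lambda_2-\lambda_{\text{GS}}$. First I would differentiate the evolution equation in \eqref{equ: PDEofL2GF} in $t$ (justified by Assumption~\ref{amp: amp1} together with the higher regularity from Remark~\ref{rem: high regularity}) to obtain $\phi_{tt} = \Delta\phi_t - V\phi_t - 3\beta\phi^2\phi_t + (\tfrac{\d}{\d t}\mu[\phi])\,\phi + \mu[\phi]\phi_t$, then test with $\phi_t$ and integrate by parts, producing
\begin{equation*}
g'(t) = -\|\nabla\phi_t\|_{L^2(\Omega)}^2 - \int_\Omega V\phi_t^2\,\d x - 3\beta\int_\Omega\phi^2\phi_t^2\,\d x + \Big(\tfrac{\d}{\d t}\mu[\phi]\Big)\!\int_\Omega\phi\phi_t\,\d x + \mu[\phi]\|\phi_t\|_{L^2(\Omega)}^2.
\end{equation*}
The crucial cancellation is that mass conservation $\|\phi\|_{L^2(\Omega)}\equiv 1$ forces $\int_\Omega\phi\phi_t\,\d x = \tfrac12\tfrac{\d}{\d t}\|\phi\|_{L^2(\Omega)}^2 = 0$, so the potentially dangerous $\tfrac{\d}{\d t}\mu[\phi]$ term drops out entirely.

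Next I would rewrite the surviving terms as
\begin{equation*}
g'(t) = -\langle L\phi_t,\phi_t\rangle - \beta\int_\Omega(3\phi^2-\phi_{\text{GS}}^2)\phi_t^2\,\d x + \mu[\phi]\,\|\phi_t\|_{L^2(\Omega)}^2,
\end{equation*}
so that the rest of the proof reduces to showing that the last two pieces are close to $\lambda_{\text{GS}}\|\phi_t\|_{L^2(\Omega)}^2$ for $t$ large. Using Assumption~\ref{amp: amp1}, the $L^2$-normalization, and Sobolev embedding $H^1\hookrightarrow L^6$, one readily gets $\mu[\phi(t)]\to\lambda_{\text{GS}}$, and the cubic correction is bounded via Hölder by $o(1)\,\|\phi_t\|_{H^1(\Omega)}^2$ as $t\to\infty$; this can be absorbed into a vanishing fraction of $\langle L\phi_t,\phi_t\rangle$ using the coercivity $\langle Lu,u\rangle \ge c\,\|u\|_{H^1(\Omega)}^2$.

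The heart of the argument is the spectral-gap step. Mass conservation gives $\phi_t \perp_{L^2}\phi$ rather than $\phi_t\perp_{L^2}\phi_{\text{GS}}$, so I would decompose $\phi_t(t) = \alpha(t)\phi_{\text{GS}} + w(t)$ with $w(t)\perp_{L^2}\phi_{\text{GS}}$ and control the wrong-orthogonality coefficient by
\begin{equation*}
|\alpha(t)| = \Big|\int_\Omega\phi_t(\phi_{\text{GS}}-\phi)\,\d x\Big| \le \|\phi_t\|_{L^2(\Omega)}\|\phi_{\text{GS}}-\phi\|_{L^2(\Omega)} = o(1)\|\phi_t\|_{L^2(\Omega)}
\end{equation*}
as $t\to\infty$. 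The min--max principle for $L$ then gives $\langle L\phi_t,\phi_t\rangle \ge \lambda_{\text{GS}}\alpha^2 + \lambda_2\|w\|_{L^2(\Omega)}^2 \ge \lambda_2\|\phi_t\|_{L^2(\Omega)}^2 - (\lambda_2-\lambda_{\text{GS}})\alpha(t)^2$. Feeding this back yields $g'(t) \le -(\lambda_2-\lambda_{\text{GS}}-\epsilon)\|\phi_t\|_{L^2(\Omega)}^2 = -2(\lambda_2-\lambda_{\text{GS}}-\epsilon)g(t)$ for $t \ge T_\epsilon$. The main obstacle I anticipate is not the algebra but the cubic correction $\beta\int_\Omega(3\phi^2-\phi_{\text{GS}}^2)\phi_t^2\,\d x$: since Assumption~\ref{amp: amp1} only gives strong $H_0^1$-convergence of $\phi$, the absorption has to be arranged so that the vanishing prefactor lands on $\|\phi_t\|_{H^1(\Omega)}^2$, where coercivity rescues it, rather than on $\|\phi_t\|_{L^2(\Omega)}^2$, where it would spoil the sharp rate.
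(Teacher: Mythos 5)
Your proposal is correct and shares the paper's skeleton --- differentiate the flow, kill the $\frac{\d}{\d t}\mu[\phi]$ term via $(\phi,\phi_t)_{L^2(\Omega)}=0$, use $\mu[\phi(t)]\to\lambda_{\text{GS}}$, and invoke the spectral gap of $L=-\Delta+V+\beta\phi_{\text{GS}}^2$ --- but it executes the two delicate steps differently. For the cubic term, the paper simply discards the nonpositive piece $-2\beta\int_\Omega\phi^2\phi_t^2\d x$ and works with the time-dependent form $a_{\phi}(\phi_t,\phi_t)=\int_\Omega\big(|\nabla\phi_t|^2+V\phi_t^2+\beta\phi^2\phi_t^2\big)\d x$, whereas you compare directly to the fixed operator $L$; note that your correction satisfies $3\phi^2-\phi_{\text{GS}}^2\to 2\phi_{\text{GS}}^2\neq 0$, so it is \emph{not} $o(1)\|\phi_t\|_{H^1(\Omega)}^2$ in absolute value --- you must first peel off the sign-definite piece $-2\beta\int_\Omega\phi_{\text{GS}}^2\phi_t^2\d x\le 0$, after which the remainder $-3\beta\int_\Omega(\phi^2-\phi_{\text{GS}}^2)\phi_t^2\d x$ is genuinely $o(1)\|\phi_t\|_{H^1(\Omega)}^2$ by H\"older and the $H^1\hookrightarrow L^4$ embedding; since only an upper bound on $g'$ is needed this is harmless, but it should be stated as a one-sided bound. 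For the spectral-gap step, the paper proves $\liminf_{t\to\infty}a_{\phi}(\phi_t,\phi_t)/\|\phi_t\|_{L^2(\Omega)}^2\ge\lambda_2$ by extracting a minimizing sequence, normalizing, passing to a weak $H^1$ limit $\hat z$ with $(\hat z,\phi_{\text{GS}})_{L^2(\Omega)}=0$, and combining weak lower semicontinuity with Courant--Fischer; your route replaces this compactness argument by the explicit decomposition $\phi_t=\alpha\phi_{\text{GS}}+w$ together with the quantitative bound $|\alpha|\le\|\phi_t\|_{L^2(\Omega)}\|\phi-\phi_{\text{GS}}\|_{L^2(\Omega)}$, which is more elementary, avoids weak convergence entirely, and makes explicit how the effective gap degrades with $\|\phi(t)-\phi_{\text{GS}}\|_{L^2(\Omega)}$. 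Both routes rest on the same (unstated) fact that $\phi_{\text{GS}}$, being a positive eigenfunction of $L$ with eigenvalue $\lambda_{\text{GS}}$, is its principal eigenfunction, so that the Rayleigh quotient of $L$ restricted to $\{\phi_{\text{GS}}\}^{\perp}$ is bounded below by $\lambda_2$ and the cross term $\langle L\phi_{\text{GS}},w\rangle$ vanishes for $w\perp_{L^2}\phi_{\text{GS}}$.
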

\begin{proof}
	Note that 
\begin{equation*}
	\phi_{tt} = \Delta \phi_t - V \phi_t - 3 \beta \phi^2 \phi_t + \left( \frac{\text{d}}{\text{d}t} \mu[\phi] \right) \phi + \mu[\phi]\phi_t,
\end{equation*}
where $\partial_{tt}\phi$ is understood in $H^{-1}$ sense. We have that 
\begin{align}
	g'(t) & = \langle\phi_t, \phi_{tt}\rangle_{H^{-1}(\Omega) \times H_0^1(\Omega)} \nn \\
	& = -(\nabla \phi_t,  \nabla \phi_t)_{L^2(\Omega)} - (V\phi_t,\phi_t)_{L^2(\Omega)}  - \beta (\phi^2 \phi_t, \phi_t)_{L^2(\Omega)} \underbrace{- 2\beta (\phi^2 \phi_t, \phi_t)_{L^2(\Omega)}}_{\le 0} \nn \\
	& \quad + \left( \frac{\text{d}}{\text{d}t} \mu[\phi] \right) \underbrace{(\phi, \phi_t)_{L^2(\Omega)}}_{ = 0} + \mu[\phi] (\phi_t,\phi_t)_{L^2(\Omega)} \nn \\
	& \le - \int_{\Omega} \Big( |\nabla \phi_t|^2 + V|\phi_t|^2 + \beta|\phi|^2 |\phi_t|^2 \Big) \ \text{d}x + \mu[\phi]\|\phi_t\|_{L^2(\Omega)}^2.
\end{align}

Denote 
\begin{equation*}
a_{\phi}(u,v):= \int_{\Omega} \Big( \nabla u \cdot \nabla v + V uv + \beta|\phi|^2 uv \Big) \ \text{d}x,
\end{equation*}
\begin{equation*}
	C_{\text{inf}}:= \liminf_{t \to \infty} \frac{a_{\phi}(\phi_t,\phi_t)}{\|\phi_t\|_{L^2(\Omega)}^2}.
\end{equation*}
Assume that $(t_n)_{n \in \mathbb{N}}$ with $t_n \to \infty$ satisfies
\begin{equation*}
	\lim_{n \to \infty} \frac{a_{\phi(t_n)}(\phi_t(t_n),\phi_t(t_n))}{\|\phi_t(t_n)\|_{L^2(\Omega)}^2} = C_{\text{inf}}.
\end{equation*}
Let
\begin{equation*}
	z_n:= \frac{\phi_t(t_n)}{\|\phi_t(t_n)\|_{L^2(\Omega)}},
\end{equation*}
we see that $\{z_n\}_{n \in \mathbb{N}}$ is a bounded sequence in $H_0^1(\Omega)$ and
\begin{align*}
	&\left| \int_{\Omega} |\phi(t_n)|^2 z_n^2 \ \text{d}x - \int_{\Omega} |\phi_{\text{GS}}|^2 z_n^2 \ \text{d}x    \right| \\
	 & \le \| \phi(t_n) + \phi_{\text{GS}}\|_{L^4(\Omega)}\| \phi(t_n) - \phi_{\text{GS}}\|_{L^4(\Omega)}\|z_n^2\|_{L^2(\Omega)}  \to 0.
\end{align*}
Hence, 
\begin{equation*}
	\lim_{n \to \infty} \underbrace{ \int_{\Omega} \Big( |\nabla  z_n|^2 + V| z_n|^2 + \beta|\phi_{\text{GS}}|^2 |z_n|^2 \Big) \ \text{d}x}_{=: a_{\phi_{\text{GS}}}(z_n,z_n)} = C_{\text{inf}}.
\end{equation*}

Since $\{z_n\}_{n \in \mathbb{N}}$ is bounded in $H^1(\Omega)$, there exists a weak limit 
\begin{equation*}
	z_n \rightharpoonup \hat{z} \qquad \text{weakly in $H^1(\Omega)$}.
\end{equation*}
This indicates $\|\hat{z}\|_{L^2(\Omega)} = 1$ and
\begin{align*}
	&\left| (\hat{z}, \phi_{\text{GS}})_{L^2(\Omega)} - (z_n, \phi(t_n))_{L^2(\Omega)}  \right| \\
	 &\le \|\hat{z} - z_n\|_{L^2(\Omega)}\|\phi_{\text{GS}}\|_{L^2(\Omega)} + \|z_n\|_{L^2(\Omega)}\|\phi_{\text{GS}} - \phi(t_n)\|_{L^2(\Omega)}  \to 0,
\end{align*} 
which leads to
\begin{equation*}
	(\hat{z}, \phi_{\text{GS}})_{L^2(\Omega)} = 0.
\end{equation*}
Lower semicontinuity of weakly converging sequence implies 
\begin{equation*}
	C_{\text{inf}} = \lim_{n \to \infty}a_{\phi_{\text{GS}}}(z_n,z_n) \ge a_{\phi_{\text{GS}}}(\hat{z}, \hat{z}) \ge \inf_{v \in \text{span}\{\phi_{\text{GS}}\}^\bot} \frac{a_{\phi_{\text{GS}}}(v, v)}{\|v\|_{L^2(\Omega)}^2}.
\end{equation*}

With the Courant-Fischer theorem we get
\begin{equation*}
	C_{\text{inf}} \ge \lambda_2,
\end{equation*}
where $\lambda_2$ is the second eigenvalue of linear eigenvalue problem: Find $(u,\lambda) \in H_0^1(\Omega) \times \R$ such that
\begin{equation*}
	- \Delta u + V u + \beta |\phi_{\text{GS}}|^2 u = \lambda u.
\end{equation*}
	
With the fact $\mu[\phi(t)] \to \lambda_{\text{GS}}$ as $t \to \infty$, there exists $T_{\epsilon} > 0$ such that for $t \ge T_{\epsilon}$
\begin{equation}\label{equ: 1}
	\mu[\phi(t)] \le \lambda_{\text{GS}} + \frac{\epsilon}{2}
\end{equation}
and 
\begin{equation}\label{equ: 2}
	\frac{a_{\phi}(\phi_t,\phi_t)}{\|\phi_t\|_{L^2(\Omega)}^2} \ge \lambda_2 - \frac{\epsilon}{2}.
\end{equation}

Combining \eqref{equ: 1} and \eqref{equ: 2}, we have that for $t \ge T_{\epsilon}$, 
\begin{align*}
	g'(t) & \le - \int_{\Omega} \Big( |\nabla \phi_t|^2 + V|\phi_t|^2 + \beta|\phi|^2 |\phi_t|^2 \Big) \ \text{d}x + \mu[\phi]\|\phi_t\|_{L^2(\Omega)}^2  = \|\phi_t\|_{L^2(\Omega)}^2 \left( \mu[\phi] - \frac{a_{\phi}(\phi_t,\phi_t)}{\|\phi_t\|_{L^2(\Omega)}^2} \right) \\
	& \le - (\lambda_2 - \lambda_{\text{GS}} - \epsilon) \|\phi_t\|_{L^2(\Omega)}^2 = -2(\lambda_2 - \lambda_{\text{GS}} - \epsilon)g(t).
\end{align*}
This completes the proof.
\end{proof}

By Gr\"onwall's inequality, we arrive at
\begin{coro}\label{coro: 1}
Let
 $\epsilon\in(0,\lambda_2 - \lambda_{GS})$. If $t \ge T_{\epsilon}$, then 
	\begin{equation}\label{equ: key inequality 1}
		g(t) \le C_{\epsilon} \exp\big(-2(\lambda_2 - \lambda_{\text{GS}} - \epsilon) t \big).
	\end{equation}	
\end{coro}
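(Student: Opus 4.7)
The plan is to invoke Lemma \ref{lem: key lemma} directly and apply a standard integrating-factor argument (the linear scalar Grönwall inequality) on the interval $[T_\epsilon, \infty)$. Once Lemma \ref{lem: key lemma} is in hand, the corollary is a one-line consequence: the differential inequality $g'(t) \le -2(\lambda_2 - \lambda_{\text{GS}} - \epsilon)\, g(t)$ is precisely the hypothesis for exponential decay with rate $2(\lambda_2 - \lambda_{\text{GS}} - \epsilon)$, and since $\epsilon \in (0, \lambda_2 - \lambda_{\text{GS}})$ this rate is strictly positive.

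Concretely, I would first fix $\epsilon \in (0, \lambda_2 - \lambda_{\text{GS}})$, set $\alpha_\epsilon := 2(\lambda_2 - \lambda_{\text{GS}} - \epsilon) > 0$, and introduce the auxiliary function $h(t) := g(t)\, e^{\alpha_\epsilon t}$. Since $g$ is nonnegative and (from the regularity of $\phi_t$ established in Theorem \ref{thm: wellposedness of model problem} together with Assumption \ref{amp: amp1}) differentiable in $t$, we may compute $h'(t) = e^{\alpha_\epsilon t}\bigl(g'(t) + \alpha_\epsilon g(t)\bigr) \le 0$ for $t \ge T_\epsilon$ by Lemma \ref{lem: key lemma}. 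Thus $h$ is nonincreasing on $[T_\epsilon, \infty)$, which yields $g(t) \le g(T_\epsilon)\, e^{-\alpha_\epsilon (t - T_\epsilon)}$ for $t \ge T_\epsilon$. Absorbing the constant $g(T_\epsilon)\, e^{\alpha_\epsilon T_\epsilon}$ into a single constant $C_\epsilon$ gives the stated bound.

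There is no real obstacle here: the only mild subtlety is ensuring that the pointwise inequality \eqref{equ: key inequality} is enough to run Grönwall, which requires $g$ to be absolutely continuous on $[T_\epsilon, \infty)$. This follows from the regularity $\phi_t \in C([0,\infty); L^2(\Omega))$ together with the computation of $g'(t) = \langle \phi_t, \phi_{tt}\rangle$ used inside the proof of Lemma \ref{lem: key lemma}, so no additional work is needed.
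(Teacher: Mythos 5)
Your proposal is correct and is exactly the argument the paper intends: the corollary is stated as an immediate consequence of Lemma \ref{lem: key lemma} ``by Gr\"onwall's inequality,'' and your integrating-factor computation $h(t)=g(t)e^{\alpha_\epsilon t}$ with $h'\le 0$ on $[T_\epsilon,\infty)$ is the standard way to carry that out. No gaps; the regularity remark about $g$ being differentiable is consistent with Assumption \ref{amp: amp1} as used in the lemma's proof.
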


With the inequality \eqref{equ: key inequality 1}, we obtain the following exponential convergence.

\begin{theorem}\label{thm: exponential covergence}
	Let $\epsilon\in(0,\lambda_2 - \lambda_{GS}).$ If $t\ge T_\epsilon$, then
	\begin{itemize}
		\item exponential convergence of energy 
		\begin{equation}\label{equ: exponential convergence of energy}
		E(\phi(t)) - E(\phi_{\text{GS}}) \le C_{\epsilon} \exp\big(-2(\lambda_2 - \lambda_{\text{GS}} - \epsilon) t \big),
	\end{equation}
	\item exponential convergence to ground state 
	\begin{equation}\label{equ: exponential convergence to ground state}
		\|\phi(t) - \phi_{\text{GS}}\|_{L^2(\Omega)} \le C_{\epsilon} \exp\big(-(\lambda_2 - \lambda_{\text{GS}} - \epsilon) t \big),
	\end{equation}
	\item exponential convergence of eigenvalue
	\begin{equation}\label{equ: exponential convergence of eigenvalue}
		\left|\mu[\phi(t)] - \lambda_{\text{GS}}\right| \le C_{\epsilon} \exp\big(-(\lambda_2 - \lambda_{\text{GS}} - \epsilon) t \big).
	\end{equation}
	\end{itemize}
\end{theorem}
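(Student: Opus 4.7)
All three statements will be read off from the $L^2$-exponential decay of $g(t)=\tfrac12\|\phi_t\|_{L^2(\Omega)}^2$ provided by Corollary \ref{coro: 1}, together with the fact (Assumption \ref{amp: amp1}) that $\phi(t)\to\phi_{\text{GS}}$ in $H_0^1(\Omega)$. The plan is to handle the three bullets in the order listed, as each one will feed into the next.

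\emph{Step 1 (Energy).} The energy diminishing identity gives $\frac{\mathrm{d}}{\mathrm{d}t}E(\phi(t))=-\|\phi_t\|_{L^2(\Omega)}^2=-2g(t)$. Since $\phi(t)\to\phi_{\text{GS}}$ strongly in $H_0^1(\Omega)$, continuity of $E$ on $H_0^1(\Omega)$ gives $E(\phi(t))\to E(\phi_{\text{GS}})$, so I can integrate from $t$ to $\infty$ and write
\begin{equation*}
E(\phi(t))-E(\phi_{\text{GS}})=\int_t^{\infty}2g(s)\,\mathrm{d}s.
\end{equation*}
Inserting the bound from Corollary \ref{coro: 1} for $s\ge T_\epsilon$ and computing the elementary integral produces \eqref{equ: exponential convergence of energy} with the rate $2(\lambda_2-\lambda_{\text{GS}}-\epsilon)$.

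\emph{Step 2 ($L^2$-convergence to the ground state).} By the same convergence, $\phi(t)\to\phi_{\text{GS}}$ in $L^2(\Omega)$, so for $t\ge T_\epsilon$ I can represent $\phi(t)-\phi_{\text{GS}}=-\int_t^{\infty}\phi_t(s)\,\mathrm{d}s$ as a Bochner integral in $L^2(\Omega)$ (whose convergence is guaranteed by Corollary \ref{coro: 1} since $\|\phi_t(s)\|_{L^2(\Omega)}=\sqrt{2g(s)}$ is integrable). Then
\begin{equation*}
\|\phi(t)-\phi_{\text{GS}}\|_{L^2(\Omega)}\le\int_t^{\infty}\sqrt{2g(s)}\,\mathrm{d}s\le\sqrt{2C_\epsilon}\int_t^{\infty}e^{-(\lambda_2-\lambda_{\text{GS}}-\epsilon)s}\,\mathrm{d}s,
\end{equation*}
and the right-hand side yields \eqref{equ: exponential convergence to ground state}. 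The apparent halving of the rate compared to Step 1 is exactly because we are taking a square root of $g$ before integrating.

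\emph{Step 3 (Eigenvalue).} Use the algebraic identity $\mu[\phi]=2E(\phi)+\tfrac{\beta}{2}\|\phi\|_{L^4(\Omega)}^4$, which I get from comparing the definitions, together with $\lambda_{\text{GS}}=\mu[\phi_{\text{GS}}]$. This gives
\begin{equation*}
\mu[\phi(t)]-\lambda_{\text{GS}}=2\bigl(E(\phi(t))-E(\phi_{\text{GS}})\bigr)+\tfrac{\beta}{2}\bigl(\|\phi(t)\|_{L^4(\Omega)}^4-\|\phi_{\text{GS}}\|_{L^4(\Omega)}^4\bigr).
\end{equation*}
The first term decays at rate $2(\lambda_2-\lambda_{\text{GS}}-\epsilon)$ by Step 1, which is more than enough. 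For the second term, I factor $|\phi|^4-|\phi_{\text{GS}}|^4=(\phi-\phi_{\text{GS}})(\phi+\phi_{\text{GS}})(|\phi|^2+|\phi_{\text{GS}}|^2)$ and apply H\"older's inequality with exponents $(2,2)$ followed by the embedding $H^1(\Omega)\hookrightarrow L^6(\Omega)$ on the cubic factor. Since $\|\phi(t)\|_{H^1(\Omega)}$ and $\|\phi_{\text{GS}}\|_{H^1(\Omega)}$ are bounded (the former by energy diminishing, the latter a priori), this yields
\begin{equation*}
\bigl|\|\phi(t)\|_{L^4(\Omega)}^4-\|\phi_{\text{GS}}\|_{L^4(\Omega)}^4\bigr|\le C\,\|\phi(t)-\phi_{\text{GS}}\|_{L^2(\Omega)},
\end{equation*}
and combining with Step 2 gives \eqref{equ: exponential convergence of eigenvalue}.

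The main technical point is Step 3: a naive Gagliardo--Nirenberg bound $\|\phi-\phi_{\text{GS}}\|_{L^4(\Omega)}\le C\|\phi-\phi_{\text{GS}}\|_{L^2(\Omega)}^{1/4}\|\phi-\phi_{\text{GS}}\|_{H^1(\Omega)}^{3/4}$ would quarter the exponential rate and fail to match the statement, so it is important to use the quartic difference structure and not the quartic norm difference of the difference. Steps 1 and 2 are straightforward integrations of Corollary \ref{coro: 1}; the only care needed there is the justification that the ``integration from infinity'' representations are valid, for which the strong convergence in Assumption \ref{amp: amp1} is exactly what is required.
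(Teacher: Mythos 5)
Your proposal is correct and follows essentially the same route as the paper: integrating $2g$ to get the energy bound, integrating $\sqrt{2g}=\|\phi_t\|_{L^2(\Omega)}$ to get the $L^2$ bound, and using the identity $\mu[v]=2E(v)+\tfrac{\beta}{2}\|v\|_{L^4(\Omega)}^4$ together with the factorization of $|\phi|^4-|\phi_{\text{GS}}|^4$ and H\"older/Sobolev to get the eigenvalue bound. The paper's H\"older split is $\|\phi-\phi_{\text{GS}}\|_{L^2}\|\phi+\phi_{\text{GS}}\|_{L^6}\|\phi^2+\phi_{\text{GS}}^2\|_{L^3}$, which is exactly the exponent bookkeeping you describe.
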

\begin{proof}
	Note that 
	\begin{equation*}
		E(\phi(t)) - E(\phi_{\text{GS}}) = \int_{t}^{\infty} 2g(t) \ \text{d}t,
	\end{equation*}
	then \eqref{equ: exponential convergence of energy} follows from \eqref{equ: key inequality 1}. \eqref{equ: exponential convergence to ground state} can be similarly derived by \eqref{equ: key inequality 1} and 
	\begin{equation*}
		\|\phi(t) - \phi_{\text{GS}}\|_{L^2(\Omega)} \le  \int_{t}^{\infty} \|\phi_t\|_{L^2(\Omega)} \ \text{d}t \le \int_{t}^{\infty} \sqrt{2 g(t)} \ \text{d}t. 
	\end{equation*} 
	
	With \eqref{equ: exponential convergence of energy} and \eqref{equ: exponential convergence to ground state}, we have 
	\begin{align*}
		|\mu[\phi(t)] - \lambda_{\text{GS}}| & \le 2(E(\phi(t)) - E(\phi_{\text{GS}})) + \left|\frac{\beta}{2} \int_{\Omega} \Big( |\phi(t)|^4 - |\phi_{\text{GS}}|^4 \Big) \ \text{d}x \right| \\
		& \le C_{\epsilon}\exp\big(-2(\lambda_2 - \lambda_{\text{GS}} - \epsilon) t \big) 
 \\
 & \quad + C \|\phi - \phi_{\text{GS}}\|_{L^2(\Omega)}\|\phi + \phi_{\text{GS}}\|_{L^6(\Omega)} \|\phi^2 + \phi_{\text{GS}}^2\|_{L^3(\Omega)} \\
		& \le C_{\epsilon}\exp\big(-(\lambda_2 - \lambda_{\text{GS}} - \epsilon) t \big).
	\end{align*}
This completes the proof of \eqref{equ: exponential convergence of eigenvalue}.
\end{proof}

In addition, we further obtain the strong convergence in $H^2(\Omega)$.
\begin{theorem}
	$\phi(t)$ strongly converges to $\phi_{\text{GS}}$ in $H^2(\Omega)$ as $t \to \infty$, and hence $\|\phi(t)\|_{H^2(\Omega)}$ is uniformly bounded.
\end{theorem}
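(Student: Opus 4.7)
The plan is to read off an expression for $\Delta(\phi(t)-\phi_{\text{GS}})$ from the evolution equation \eqref{equ: PDEofL2GF} and the ground-state equation \eqref{equ: TIGPE}, and then apply $H^2$ elliptic regularity (which is available since $\Omega$ is bounded, convex and smooth and the difference has zero trace). Subtracting the two PDEs one obtains
\begin{align*}
-\Delta(\phi-\phi_{\text{GS}}) &= -\phi_t - V(\phi-\phi_{\text{GS}}) - \beta\bigl(|\phi|^2\phi - |\phi_{\text{GS}}|^2\phi_{\text{GS}}\bigr) \\
&\quad + \mu[\phi]\,(\phi-\phi_{\text{GS}}) + \bigl(\mu[\phi]-\lambda_{\text{GS}}\bigr)\phi_{\text{GS}},
\end{align*}
so that, by standard elliptic regularity on a convex smooth domain,
\begin{equation*}
\|\phi(t)-\phi_{\text{GS}}\|_{H^2(\Omega)} \le C\,\|\Delta(\phi(t)-\phi_{\text{GS}})\|_{L^2(\Omega)}.
\end{equation*}

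I would then estimate each term on the right-hand side separately in $L^2(\Omega)$ and show it tends to $0$ as $t \to \infty$. The time-derivative term is handled by Corollary \ref{coro: 1}, which gives $\|\phi_t\|_{L^2(\Omega)}\to 0$ exponentially. The potential term is controlled by $\|V\|_{L^{\infty}(\Omega)}\|\phi-\phi_{\text{GS}}\|_{L^2(\Omega)}$ and Theorem \ref{thm: exponential covergence}. For the cubic term I would factor $\phi^3 - \phi_{\text{GS}}^3 = (\phi-\phi_{\text{GS}})(\phi^2+\phi\phi_{\text{GS}}+\phi_{\text{GS}}^2)$ and use the Sobolev embedding $H^1(\Omega)\hookrightarrow L^6(\Omega)$ together with the uniform $H^1$-bound on $\phi(t)$ (established in Part~I of Theorem \ref{thm: wellposedness of model problem}) and the strong $H^1$-convergence to conclude $\|\phi^3 - \phi_{\text{GS}}^3\|_{L^2(\Omega)}\to 0$. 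Finally $\mu[\phi]$ is uniformly bounded and converges to $\lambda_{\text{GS}}$ by \eqref{equ: exponential convergence of eigenvalue}, so both $\mu[\phi](\phi-\phi_{\text{GS}})$ and $(\mu[\phi]-\lambda_{\text{GS}})\phi_{\text{GS}}$ vanish in $L^2(\Omega)$ as $t \to \infty$.

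Combining these estimates yields $\|\Delta(\phi(t)-\phi_{\text{GS}})\|_{L^2(\Omega)}\to 0$, hence $\phi(t)\to\phi_{\text{GS}}$ in $H^2(\Omega)$. For the uniform bound, by Assumption \ref{amp: amp1} (together with Remark \ref{rem: high regularity}) the map $t\mapsto\phi(t)$ is continuous into $H^2(\Omega)$, so it is bounded on any compact interval $[0,T]$; combined with the $H^2$-convergence just proven on $[T,\infty)$ for $T$ large, this yields $\sup_{t\ge 0}\|\phi(t)\|_{H^2(\Omega)}<\infty$.

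The only delicate point is the nonlinear term, where one must avoid appealing to $H^2$-regularity of $\phi$ itself (which would be circular for the uniform-bound statement); the factorization $\phi^3-\phi_{\text{GS}}^3$ together with $H^1 \hookrightarrow L^6$ in three dimensions is exactly the right tool, since it reduces everything to the already-known $H^1$-convergence. Everything else is bookkeeping on exponentially decaying quantities from Theorem \ref{thm: exponential covergence}.
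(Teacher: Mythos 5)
Your proposal is correct and follows essentially the same route as the paper: both express $\Delta\phi(t)$ from the evolution equation, use $\phi_t\to0$ in $L^2(\Omega)$ (Corollary \ref{coro: 1}) together with the known convergences of $V\phi$, $\beta\phi^3$ and $\mu[\phi]\phi$ to conclude $\Delta\phi(t)\to\Delta\phi_{\text{GS}}$ in $L^2(\Omega)$, and then invoke elliptic regularity. The paper's proof is simply terser; your version supplies the bookkeeping (the cubic factorization with $H^1\hookrightarrow L^6$ and the elliptic-regularity step) that the paper leaves implicit.
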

\begin{proof}
	We only need to prove the first claim. By Corollary \ref{coro: 1}, we have $\phi_t \to 0$ in $L^2(\Omega)$, which implies 
	\begin{align*}
		\Delta \phi(t) & = \phi_t + V \phi + \beta \phi^3 - \mu[\phi] \phi \\
		& \to V \phi_{\text{GS}} + \beta \phi_{\text{GS}}^3 - \lambda_{\text{GS}} \phi_{\text{GS}} = \Delta \phi_{\text{GS}} \qquad \text{in $L^2(\Omega)$}.
	\end{align*}
	Namely, $\phi(t) \to \phi_{\text{GS}}$ in $H^2(\Omega) \cap H_0^1(\Omega)$.
\end{proof}

\section{Numerical analysis}\label{sec: Numerical analysis}

In this section, we introduce the numerical discretization scheme of the $L^2$ normalized gradient flow corresponding to the GP eigenvalue problem and its convergence analysis. Combining a backward-forward Euler discretiaztion in time and a linear finite element discretiaztion in space, we propose a fully discrete scheme for the model problem, and give local in time convergence results with respect to the temporal mesh $\tau$ and spatial mesh $h$. 

\subsection{Numerical scheme}

We consider the numerical approximation of \eqref{equ: PDEofL2GF} in finite time $T$. Taking an inner product with $v \in H_0^1(\Omega)$ in \eqref{equ: PDEofL2GF}, we obtain the variational from of \eqref{equ: PDEofL2GF} as follows:
\begin{align}\label{equ: variational form}
    (\phi_t,v)_{L^2(\Omega)} + (\nabla\phi,v )_{L^2(\Omega)}   = - (V\phi,v)_{L^2(\Omega)} - \beta (\phi^3,v)_{L^2(\Omega)} + \mu[\phi](\phi,v)_{L^2(\Omega)} \qquad \forall\,  t > 0.
\end{align}

We first consider the temporal semi-discretization. Let $0 = t_0 < t_1 < \cdots < t_N = T$ denote a uniform partition of the time interval $[0,T]$ with stepsize $\tau = T/N$. Let $\phi^n$ be the approximation of $\phi(t_n)$. By applying the backward-forward Euler discretiaztion to \eqref{equ: variational form} at $t = t_{n+1}$, we obtain
\begin{align}
    & \left( \frac{\tilde{\phi}^{n+1} - \phi^n}{\tau}, v \right)_{L^2(\Omega)} + (\nabla \tilde{\phi}^{n+1}, \nabla v)_{L^2(\Omega)} \nn \\
    & \quad = - (V\phi^n,v)_{L^2(\Omega)} - \beta((\phi^n)^3,v)_{L^2(\Omega)} + \mu[\phi^n](\phi^n,v)_{L^2(\Omega)} \quad \forall v \in H_0^1(\Omega). \label{equ: temporal semi-discretization}
\end{align}
Here $\tilde{\phi}^{n+1}$ is an temporary approximation of $\phi(t_{n+1})$. In general, $\|\tilde{\phi}^{n+1}\|_{L^2(\Omega)} \neq 1$. To get a mass conservation approximation, we normalize $\tilde{\phi}^{n+1}$ and set
\begin{align*}
    \phi^{n+1}   = \frac{ \tilde{\phi}^{n+1}  }{\left\| \tilde{\phi}^{n+1}  \right\|_{L^2(\Omega)}}.
\end{align*}

We then use the finite element method to do the spatial discretization of \eqref{equ: temporal semi-discretization}. Let $\mathcal{T}_h$ be a shape regular and quasi-uniform triangulation of $\Omega$ with mesh size $h$, and denote $S_h \subset H_0^1(\Omega)$ the piecewise linear finite element space corresponding to $\mathcal{T}_h$. The Ritz projection operator $R_h:H_0^1(\Omega) \to S_h$ and the discrete Laplacian operator $\Delta_h: S_h \to S_h$ are defined as follows:
\begin{align*}
	& (\nabla R_h u, \nabla v_h)_{L^2(\Omega)} = (\nabla  u, \nabla v_h)_{L^2(\Omega)} \qquad \forall\, u \in H_0^1(\Omega), \, \forall\, v_h \in S_h, \\
	& (\Delta_h u_h, v_h)_{L^2(\Omega)} = - (\nabla  u_h, \nabla v_h)_{L^2(\Omega)} \qquad \forall\, u_h, \, v_h \in S_h.
\end{align*}

Combing the finite element discretization in space, we apply the following full-discrete scheme to \eqref{equ: PDEofL2GF}. 

For $0 \le n \le N-1$ and given $\phi_h^n \in S_h$, we first obtain $\tilde{\phi}_h^{n+1}$ by solving the following equation: Find $\tilde{\phi}_h^{n+1} \in S_h$, such that,
\begin{align}
	&\left(\frac{ \tilde{\phi}_h^{n+1} -  \phi_h^{n}}{\tau},v_h\right)_{L^2(\Omega)} + (\nabla \tilde{\phi}_h^{n+1}, \nabla v_h )_{L^2(\Omega)} \nn \\
    &= \left( - V\phi_h^{n}, v_h \right)_{L^2(\Omega)} - \beta \left( |\phi_h^{n}|^2\phi_h^{n}, v_h \right)_{L^2(\Omega)} - \mu[\phi_h^{n}]\left(\phi_h^{n},v_h \right)_{L^2(\Omega)} \qquad \forall\, v_h \in S_h. \label{equ: numerical scheme 1} 
    \end{align}
Then, we get $\phi_h^{n+1}$ by normalizing $\tilde{\phi}_h^{n+1}$, i.e.,
\begin{align}
	 \phi_h^{n+1}   = \frac{ \tilde{\phi}_h^{n+1}  }{\left\| \tilde{\phi}_h^{n+1}  \right\|_{L^2(\Omega)}}. \label{equ: numerical scheme 2}
\end{align}
The initial value $\phi_h^0$ is given by 
\begin{equation}\label{equ: numerical scheme 3}
	 \phi_h^0  =  \frac{R_h  \phi_0 }{\| R_h  \phi_0 \|_{L^2(\Omega)} }.
\end{equation}

The subsequent theorem shows the well-posedness of the numerical scheme \eqref{equ: numerical scheme 1}--\eqref{equ: numerical scheme 3}.
\begin{theorem}
	The numerical scheme \eqref{equ: numerical scheme 1}--\eqref{equ: numerical scheme 3} is well-posed for any $\tau >0$ and $h > 0$.
\end{theorem}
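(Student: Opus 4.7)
My plan is to proceed by induction on $n\ge 0$, checking at each time level that both substeps of the scheme are well defined. The main work is in the inductive step, where I will separately handle the linear solve \eqref{equ: numerical scheme 1} and the normalization \eqref{equ: numerical scheme 2}. The base case just requires $R_h\phi_0\neq 0$; since $\|\phi_0\|_{L^2(\Omega)}=1$ and $R_h$ is a stable projection with $R_h\phi_0\to\phi_0$ in $L^2(\Omega)$ as $h\to 0$, the denominator in \eqref{equ: numerical scheme 3} is nonzero for any $h>0$, so $\phi_h^0\in S_h$ is well defined with unit $L^2$-norm.

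For the linear step, I would recast \eqref{equ: numerical scheme 1} as the variational problem of finding $\tilde{\phi}_h^{n+1}\in S_h$ such that $a_\tau(\tilde{\phi}_h^{n+1},v_h)=F_n(v_h)$ for all $v_h\in S_h$, where
\[
a_\tau(u,v):=\frac{1}{\tau}(u,v)_{L^2(\Omega)}+(\nabla u,\nabla v)_{L^2(\Omega)},
\]
and $F_n$ collects the explicit data coming from $\phi_h^n$, $V$, and $\mu[\phi_h^n]$. The form $a_\tau$ is symmetric and continuous on $H_0^1(\Omega)\times H_0^1(\Omega)$, and it is coercive on $H_0^1(\Omega)$ (hence on the closed subspace $S_h$) because $a_\tau(u,u)\ge\|\nabla u\|_{L^2(\Omega)}^2$, which controls the full $H^1$ norm via Poincar\'e's inequality. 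The functional $F_n$ is bounded on $S_h$, using $V\in L^\infty(\Omega)$ together with the embedding $H^1(\Omega)\hookrightarrow L^6(\Omega)$ to absorb the cubic term $\beta|\phi_h^n|^2\phi_h^n$. Lax--Milgram (equivalently, invertibility of the symmetric positive definite algebraic matrix $\tau^{-1}M+K$ built from the mass and stiffness matrices) then delivers a unique $\tilde{\phi}_h^{n+1}\in S_h$ for every $\tau>0$ and $h>0$.

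The main obstacle is the normalization \eqref{equ: numerical scheme 2}, which requires $\tilde{\phi}_h^{n+1}\neq 0$. I would argue by contradiction: if $\tilde{\phi}_h^{n+1}=0$, testing \eqref{equ: numerical scheme 1} with the admissible function $v_h=\phi_h^n\in S_h$, combined with the inductive hypothesis $\|\phi_h^n\|_{L^2(\Omega)}=1$ and the definition of $\mu[\phi_h^n]$ in \eqref{equ: CNGF3}, forces the single algebraic identity
\[
\frac{1}{\tau}=\mu[\phi_h^n]+(V\phi_h^n,\phi_h^n)_{L^2(\Omega)}+\beta\|\phi_h^n\|_{L^4(\Omega)}^4,
\]
whose right-hand side is a strictly positive number depending only on $\phi_h^n$ and $V$. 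For any $\tau>0$ distinct from this resonant value, the contradiction is immediate. For the lone borderline case, I would either test against an alternative $v_h$ in $S_h$ that breaks the resonance (e.g.\ a nodal basis function concentrated where $\phi_h^n$ is largest), or perturb $\tau$ and exploit the continuous dependence of $\tilde{\phi}_h^{n+1}$ on $\tau$ together with the openness of the set $\{\tilde{\phi}_h^{n+1}\neq 0\}$ to rule out vanishing. This closes the induction and yields well-posedness for every $\tau>0$ and $h>0$.
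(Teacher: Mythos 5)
Your treatment of the linear substep via Lax--Milgram is correct and is essentially the paper's argument. The gap is in the normalization step. The paper's contradiction (its equation \eqref{equ: 4.13}, where the chemical-potential term enters with the sign $+\mu[\phi_h^n]\phi_h^n$, consistent with the continuous flow \eqref{equ: CNGF1} and the semi-discretization \eqref{equ: temporal semi-discretization}) comes from testing with $v_h=\phi_h^n$ \emph{and inserting the definition \eqref{equ: CNGF3} of $\mu[\phi_h^n]$}: the potential and quartic terms cancel exactly, leaving
\begin{equation*}
-\frac{1}{\tau}=\int_{\Omega}|\nabla\phi_h^n|^2\,\mathrm{d}x,
\end{equation*}
which is impossible for \emph{every} $\tau>0$ --- there is no resonant value at all. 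Your identity
$\frac{1}{\tau}=\mu[\phi_h^n]+(V\phi_h^n,\phi_h^n)_{L^2(\Omega)}+\beta\|\phi_h^n\|_{L^4(\Omega)}^4$
carries the wrong sign on $\mu[\phi_h^n]$ (you appear to have taken the literal $-\mu$ in \eqref{equ: numerical scheme 1}, which is a typo relative to \eqref{equ: temporal semi-discretization} and to the paper's own proof), and you never expanded $\mu[\phi_h^n]$; as a result you manufacture a strictly positive right-hand side and hence a genuinely admissible ``resonant'' $\tau$ that your argument then cannot exclude.

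Neither of your two escape routes closes that hole. Testing against a different $v_h$ yields further necessary conditions but no contradiction in general: if $\tilde{\phi}_h^{n+1}=0$ really solved \eqref{equ: numerical scheme 1}, the identity would hold for all $v_h\in S_h$ simultaneously, and nothing you wrote shows that a nodal basis function violates it. The perturbation argument is logically backwards: openness of the set $\{\tau:\tilde{\phi}_h^{n+1}(\tau)\neq 0\}$ says that nonvanishing persists near a $\tau$ where it already holds; it gives no information about the specific resonant $\tau$ itself. The fix is simply to redo the algebra with the sign of the continuous model and the definition of $\mu[\phi_h^n]$; the contradiction then becomes unconditional and the theorem holds for all $\tau>0$ and $h>0$ as stated.
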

\begin{proof}
The numerical scheme \eqref{equ: numerical scheme 1}--\eqref{equ: numerical scheme 3} contains two steps, the first step is to get $\tilde{\phi}_h^{n+1}$ by solving the following equation
\begin{align*}
    \tau(\nabla \tilde{\phi}_h^{n+1}, \nabla v_h)_{L^2(\Omega)} + ( \tilde{\phi}_h^{n+1}, v_h)_{L^2(\Omega)} = (f^n,v_h)_{L^2(\Omega)} \qquad \forall\, v_h \in S_h,
\end{align*}
here $f^n \in L^2(\Omega)$ only depends on $\phi_h^n$. By noticing that
\begin{align*}
    \tau(\nabla u_h, \nabla u_h)_{L^2(\Omega)} + ( u_h, u_h)_{L^2(\Omega)} \ge \min\{ \tau,1 \}\|u_h\|_{H^1(\Omega)}^2 \qquad \forall u_h \in S_h,
\end{align*}
then the well-posedness of $\tilde{\phi}_h^{n+1}$ can be obtained by the Lax-Milgram lemma \cite{brenner2008mathematical}.
The second step is to get $\phi^{n+1}_h$ by normalizing $\tilde{\phi}_h^{n+1}$, to show the well-posedness of the second step, we only need to prove that for any $0 \le n \le N-1$  
	\begin{equation*}
		\left\| \tilde{\phi}_h^{n+1}  \right\|_{L^2(\Omega)} \neq 0.
	\end{equation*}
	If not, suppose 
	\begin{equation*}
		\left\| \tilde{\phi}_h^{n+1}  \right\|_{L^2(\Omega)} = 0
	\end{equation*}
	for some $0 \le n \le N-1$, then equation
	\begin{equation}\label{equ: 4.13}
		\left( -\frac{     \phi_h^{n} }{\tau}, v_h \right)_{L^2(\Omega)} = \left( - V \phi_h^{n}  - \beta|\phi_h^{n}|^2  \phi_h^{n}  + \mu[\phi_h^{n}]  \phi_h^{n} ,v_h \right)_{L^2(\Omega)} 
	\end{equation}
    holds for any $v_h \in S_h$. Choose $ v_h =  \phi_h^{n} $ in \eqref{equ: 4.13}, we can get 
	\begin{equation*}
	 \underbrace{- \frac{1}{\tau}}_{<0} = \underbrace{\int_{\Omega} |\nabla \phi_h^n|^2 \ \text{d}x}_{\ge 0},
	\end{equation*}
	which is a contradiction.
\end{proof}

\subsection{Convergence}

In this subsection, we will show the convergence of the numerical scheme \eqref{equ: numerical scheme 1}--\eqref{equ: numerical scheme 3}. It is well known that error estimates with respect to the mesh size depend on the regularity of the solution. In our analysis, we require the following assumption.
\begin{amp}
	The potential function $V \in L^2(\Omega)$, and the solution of \eqref{equ: PDEofL2GF} satisfies 
	\begin{equation}
		\phi \in C^1([0,\infty); H^2(\Omega) \cap H_0^1(\Omega)), \qquad \phi_{tt}  \in C([0,\infty); L^2(\Omega) ).
	\end{equation}
\end{amp}
\begin{remark}
    Although the mathematical analysis of the $L^2$ normalized gradient flow is considered under $V \in L^{\infty}(\Omega)$, our numerical analysis will show that the convergence also works under a weaker potential function $V \in L^2(\Omega)$ which includes the Coulomb potential.
\end{remark}

\subsubsection{Technical lemmas}

In this part, we will list several technical lemmas which will be used in the convergence analysis, whose proofs are provided in Appendix \ref{sec: appendix}. In order to simplify the notation, for $0 \neq v \in L^2(\Omega)$, denote 
\begin{equation*}
	\widehat{v} = \frac{v}{\|v\|_{L^2(\Omega)}}.
\end{equation*}
	
\begin{lemma}\label{lem: technical lemma 1}
	For $\|u\|_{H^1(\Omega)} \le M$, $\|v\|_{H^1(\Omega)} \le M$, $\|u\|_{L^2(\Omega)} = \|v\|_{L^2(\Omega)} =1$, there holds
	\begin{equation*}
		|\mu[u] - \mu[v]| \le C_M \|u - v\|_{H^1(\Omega)}.
	\end{equation*}
\end{lemma}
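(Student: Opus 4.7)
The plan is to write $\mu[u]-\mu[v]$ as an explicit sum of integrals and bound each term using Sobolev embeddings. Because $\|u\|_{L^2(\Omega)} = \|v\|_{L^2(\Omega)} = 1$, the normalization factor $1/\|\cdot\|_{L^2}^2$ in the definition \eqref{equ: CNGF3} is simply $1$, so
\[
\mu[u]-\mu[v] \;=\; \int_{\Omega}\bigl(|\nabla u|^2-|\nabla v|^2\bigr)\,\mathrm{d}x \;+\; \int_{\Omega} V\bigl(|u|^2-|v|^2\bigr)\,\mathrm{d}x \;+\; \beta\int_{\Omega}\bigl(|u|^4-|v|^4\bigr)\,\mathrm{d}x,
\]
and it suffices to control each of the three pieces by $C_M\|u-v\|_{H^1(\Omega)}$.

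First I would handle the kinetic term by factoring $|\nabla u|^2-|\nabla v|^2 = (\nabla u+\nabla v)\cdot(\nabla u-\nabla v)$ and using Cauchy--Schwarz together with $\|\nabla u\|_{L^2}, \|\nabla v\|_{L^2}\le M$, which yields a bound of $2M\|u-v\|_{H^1(\Omega)}$. Next, for the potential term I factor $|u|^2-|v|^2=(u+v)(u-v)$ and estimate
\[
\Bigl|\int_{\Omega} V(u+v)(u-v)\,\mathrm{d}x\Bigr| \;\le\; \|V\|_{L^2(\Omega)}\,\|u+v\|_{L^4(\Omega)}\,\|u-v\|_{L^4(\Omega)},
\]
after which the continuous embedding $H^1(\Omega)\hookrightarrow L^4(\Omega)$ (valid in three dimensions) together with the bound $M$ on the $H^1$ norms converts this into $C_M\|u-v\|_{H^1(\Omega)}$. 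Finally, for the quartic term I factor $|u|^4-|v|^4=(u-v)(u+v)(u^2+v^2)$ and apply H\"older with exponents $(4,4,2)$, so that
\[
\Bigl|\int_{\Omega}(|u|^4-|v|^4)\,\mathrm{d}x\Bigr| \;\le\; \|u-v\|_{L^4(\Omega)}\,\|u+v\|_{L^4(\Omega)}\,\bigl(\|u\|_{L^4(\Omega)}^2+\|v\|_{L^4(\Omega)}^2\bigr),
\]
and again $H^1\hookrightarrow L^4$ plus the $M$-bound finishes the estimate.

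Combining the three bounds yields the stated inequality with $C_M$ depending on $M$, $\|V\|_{L^2(\Omega)}$, $\beta$, and the embedding constant. I do not anticipate any genuine obstacle: the only subtlety is that the numerical-analysis assumption only gives $V\in L^2(\Omega)$ rather than $L^\infty(\Omega)$, which is precisely why the potential term must be treated by Cauchy--Schwarz paired with an $L^4\times L^4$ estimate on $(u+v)(u-v)$ rather than by pulling $V$ out in $L^\infty$; once this is recognized the computation is routine.
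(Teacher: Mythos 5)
Your proposal is correct and follows essentially the same route as the paper: the identical three-term decomposition, the same factorizations, the $L^2\times L^4\times L^4$ treatment of the potential term exploiting only $V\in L^2(\Omega)$, and Sobolev embedding for the quartic term (the paper uses H\"older exponents $(3,6,2)$ where you use $(4,4,2)$, an immaterial difference). No gaps.
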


\begin{lemma}\label{lem: technical lemma 2}
	If $u \in H_0^1(\Omega) \cap H^2(\Omega)$ with $\|u\|_{L^2(\Omega)} = 1$, then 
	\begin{equation*}
		\left| \mu[u] - \mu[\widehat{R_hu}]  \right| \le Ch^2,
	\end{equation*}
    where the constant $C$ only depends on $\|u\|_{H^2(\Omega)}$.
\end{lemma}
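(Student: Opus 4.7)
The plan is to decompose $\mu[u]-\mu[\widehat{R_hu}]$ into three natural pieces corresponding to the kinetic, potential, and quartic contributions to $\mu$, and for each piece further separate the effect of the normalization constant $\alpha_h:=\|R_hu\|_{L^2(\Omega)}$ from the effect of replacing $u$ by $R_hu$. The tools are standard finite element machinery: the Ritz projection estimates $\|u-R_hu\|_{L^2}\le Ch^2\|u\|_{H^2}$ and $\|u-R_hu\|_{H^1}\le Ch\|u\|_{H^2}$, Galerkin orthogonality $(\nabla(u-R_hu),\nabla v_h)=0$ for $v_h\in S_h$, and the Sobolev embedding $H^2(\Omega)\hookrightarrow L^\infty(\Omega)$ that ensures $u$ (and, via stability of $R_h$ together with an inverse inequality, $R_hu$) is uniformly bounded in $L^p$ for every $p$, with bounds depending only on $\|u\|_{H^2}$.

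First I would control the normalization. Writing $\alpha_h^2-1=\|R_hu\|_{L^2}^2-\|u\|_{L^2}^2=\|R_hu-u\|_{L^2}^2+2(R_hu-u,u)_{L^2}$ and applying Cauchy--Schwarz together with the $L^2$ Ritz estimate gives $|\alpha_h^2-1|\le Ch^2$, hence also $|1-\alpha_h^{-2}|,\,|1-\alpha_h^{-4}|\le Ch^2$ for $h$ small. Then I would expand each of the three terms as
\begin{equation*}
A_i-\alpha_h^{-p_i}B_i=(A_i-B_i)+(1-\alpha_h^{-p_i})B_i,
\end{equation*}
where $A_i$ is the $i$-th term of $\mu[u]$ and $B_i$ the analogous unnormalized quantity built from $R_hu$. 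Since each $B_i$ is bounded in terms of $\|u\|_{H^2}$ (using uniform $H^1$-stability of $R_h$ and Sobolev embeddings), the second summand is immediately $O(h^2)$. For the first summand: the kinetic piece collapses via Galerkin orthogonality to $\|\nabla(u-R_hu)\|_{L^2}^2\le Ch^2\|u\|_{H^2}^2$; the potential piece is factored as $\int V(u-R_hu)(u+R_hu)\,\mathrm{d}x$ and estimated by H\"older with $V\in L^2$, $u-R_hu$ in $L^2$, and $u+R_hu$ in $L^\infty$; the quartic piece is factored as $\int(u-R_hu)(u+R_hu)(u^2+(R_hu)^2)\,\mathrm{d}x$ and treated similarly, using the $L^2$ Ritz estimate on $u-R_hu$ and $L^\infty$ bounds on the remaining factors.

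The main obstacle I anticipate is squeezing the full $O(h^2)$ rate out of the potential term under the weak hypothesis $V\in L^2(\Omega)$: a naive H\"older split with $u-R_hu$ only in $L^2$ forces the other factor into $L^\infty$, which for $R_hu$ must be argued by the discrete maximum norm stability of the Ritz projection (or by combining the inverse inequality with the optimal $H^1$ estimate). If that $L^\infty$ control over $R_hu$ turns out to be too lossy, the fallback is an Aubin--Nitsche style duality argument applied to the functional $w\mapsto\int V w(u+R_hu)\,\mathrm{d}x$, which trades the Ritz orthogonality in $H^1$ for a second factor of $h$ and so recovers the $h^2$ rate with the constant depending only on $\|u\|_{H^2}$, $\|V\|_{L^2}$, $\beta$, and $\Omega$.
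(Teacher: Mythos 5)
Your proposal is correct and follows essentially the same route as the paper: the same splitting into a Ritz-projection error plus a normalization error with $|1-\|R_hu\|_{L^2}^{-p}|\le Ch^2$, Galerkin orthogonality to get the $O(h^2)$ rate for the gradient term (the paper writes it as $\int_\Omega\Delta u\,(u-R_hu)\,\mathrm{d}x$ rather than $\|\nabla(u-R_hu)\|_{L^2}^2$, but both hinge on the same orthogonality), and H\"older with the $L^\infty$ bound $\|R_hu\|_{L^\infty(\Omega)}\le C\|u\|_{H^2(\Omega)}$ (which the paper establishes exactly as you suggest, via the interpolant and an inverse inequality) for the potential and quartic terms. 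The Aubin--Nitsche fallback you mention is not needed.
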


\begin{lemma}[discrete Gronwall's inequality \cite{heywood1990finite}]
Let $\tau$, $B$ and $a_k$, $b_k$, $c_k$, $\gamma_k$ be nonnegative numbers such that
\begin{equation*}
	a_n + \tau \sum_{k = 0}^n \gamma_k b_k \le \tau \sum_{k = 0}^n \gamma_k a_k + \tau \sum_{k = 0}^n c_k + B.
	\end{equation*}
	Suppose that $\tau\gamma_k < 1$ for all $k$, and set $\sigma_k = ( 1- \tau\gamma_k )^{-1}$. Then 
	\begin{equation*}
	a_n + \tau \sum_{k = 0}^n \gamma_k b_k \le \exp\left( \tau \sum_{k =0}^n \gamma_k \sigma_k \right)\left( \tau\sum_{k = 0}^n c_k + B \right).
 \end{equation*}
\end{lemma}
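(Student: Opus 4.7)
The plan is to prove the inequality by induction on $n$, after a preliminary manipulation that makes the recursion explicit in $a_n$. First, I would split off the $k=n$ term from the sum $\tau\sum_{k=0}^n\gamma_k a_k$ on the right, move $\tau\gamma_n a_n$ to the left, and multiply through by $\sigma_n=(1-\tau\gamma_n)^{-1}$, which is legal precisely because $\tau\gamma_n<1$. Setting $E_n := a_n + \tau\sum_{k=0}^n\gamma_k b_k$ and $C_n := \tau\sum_{k=0}^n c_k + B$, and using $\sigma_n\ge 1$ to discard the extra factor in front of the $b_k$ sum as well as the pointwise bound $a_k\le E_k$, this reduces the hypothesis to the explicit recursion
\begin{equation*}
E_n \le \sigma_n \tau \sum_{k=0}^{n-1}\gamma_k E_k + \sigma_n C_n.
\end{equation*}

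The rest is a standard discrete Gronwall induction: I would claim $E_n \le C_n \exp(s_n)$ with $s_n := \tau\sum_{k=0}^n \gamma_k \sigma_k$, and prove it by induction on $n$. The key algebraic input is the single convex inequality $\sigma_k \le \exp(\tau\gamma_k\sigma_k)$, which follows from $1+x \le e^x$ applied to $x=\tau\gamma_k\sigma_k$ together with the identity $\sigma_k = 1+\tau\gamma_k\sigma_k$. For the inductive step, substituting the hypothesis into the recursion and using the monotonicity $C_k\le C_n$ (since the $c_j$ are nonnegative) yields $E_n \le \sigma_n C_n\bigl[\,1 + \tau\sum_{k=0}^{n-1}\gamma_k \exp(s_k)\,\bigr]$. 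Since $\sigma_n\le\exp(\tau\gamma_n\sigma_n)=\exp(s_n-s_{n-1})$, the desired conclusion reduces to the auxiliary estimate $1 + \tau\sum_{k=0}^{n-1}\gamma_k \exp(s_k) \le \exp(s_{n-1})$.

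This auxiliary estimate admits a parallel induction on $n$: peeling off the top term and dividing by $\exp(s_{n-1})$ collapses the inductive step to exactly the same inequality $\sigma_{n-1}\le \exp(\tau\gamma_{n-1}\sigma_{n-1})$ used earlier. Chaining the two inductions then gives $E_n \le C_n \exp(s_n)$, which is the claim. I do not anticipate any substantive obstacle: the whole argument rests on one convex-inequality trick together with careful bookkeeping. The only subtle point is that the exponent in the final bound must involve $\gamma_k\sigma_k$ rather than just $\gamma_k$ — it is precisely the factor $\sigma_n$ produced by the multiplication in the reduction step that the exponential needs to absorb at each level, and using $\gamma_k$ alone would make the induction fall short by exactly this factor.
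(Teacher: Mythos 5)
Your argument is correct. Note, however, that the paper does not prove this lemma at all: it is quoted verbatim as a known result from Heywood and Rannacher \cite{heywood1990finite}, so there is no in-paper proof to compare against. Your write-up is essentially the standard proof of that cited result: isolate the $k=n$ term, multiply by $\sigma_n=(1-\tau\gamma_n)^{-1}$ (using $\sigma_n\ge 1$ to keep the $b_k$ sum and $a_k\le E_k$ to close the recursion), and then run the induction on the single inequality $\sigma_k=1+\tau\gamma_k\sigma_k\le\exp(\tau\gamma_k\sigma_k)$. All the steps check out, including the auxiliary bound $1+\tau\sum_{k=0}^{n-1}\gamma_k\exp(s_k)\le\exp(s_{n-1})$, whose inductive step indeed reduces to the same convexity inequality after dividing by $\exp(s_{n-1})$; your closing remark correctly identifies why the exponent must carry $\gamma_k\sigma_k$ rather than $\gamma_k$. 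The only (standard) reading you implicitly adopt, and should perhaps state, is that the hypothesis is assumed for every index up to $n$, since the induction uses it at each earlier level.
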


\subsubsection{Consistency}

Note that the exact solution satisfies: for any $v_h \in S_h$
\begin{align*}
	&(\phi_t(t_{n+1}), v_h)_{L^2(\Omega)} + (\nabla \phi(t_{n+1}), \nabla v_h)_{L^2(\Omega)}\\
   & \qquad\qquad\qquad = -(V\phi(t_{n+1}), v_h)_{L^2(\Omega)} - (\beta\phi(t_{n+1})^3,v_h)_{L^2(\Omega)} + \mu[\phi(t_{n+1})](\phi(t_{n+1}),v_h)_{L^2(\Omega)},
\end{align*}
which implies 
\begin{align*}
	&\left( \frac{\widehat{R_h\phi(t_{n+1})} - \widehat{R_h\phi(t_{n})}}{\tau}, v_h    \right)_{L^2(\Omega)} + (\nabla \widehat{R_h\phi(t_{n+1})}, \nabla v_h)_{L^2(\Omega)} \\
	 =& -(V\widehat{R_h\phi(t_{n})}, v_h)_{L^2(\Omega)} - (\beta (\widehat{R_h\phi(t_{n})})^3, v_h)_{L^2(\Omega)} \\
	 & + \mu[\widehat{R_h\phi(t_{n})}] (\widehat{R_h\phi(t_{n})},v_h)_{L^2(\Omega)} + \mathcal{E}(v_h).
\end{align*}
Here, $\mathcal{E}(v_h)$ denotes the truncation error, given by
\begin{align}
	\mathcal{E}(v_h) & = \left( \frac{\widehat{R_h\phi(t_{n+1})} - \widehat{R_h\phi(t_{n})}}{\tau}, v_h    \right)_{L^2(\Omega)} - (\phi_t(t_{n+1}), v_h)_{L^2(\Omega)} \nn \\
	& \quad +  \underbrace{\frac{1}{\|R_h\phi(t_{n+1})\|_{L^2(\Omega)}}(\nabla R_h \phi(t_{n+1}) - \nabla \phi(t_{n+1}), \nabla v_h)_{L^2(\Omega)}}_{=0} \nn \\
	& \quad + \left( \frac{1}{\|R_h\phi(t_{n+1})\|_{L^2(\Omega)}} - 1 \right)(\nabla \phi(t_{n+1}), \nabla v_h)_{L^2(\Omega)} \nn \\
	&\quad + (V\widehat{R_h\phi(t_{n})}, v_h)_{L^2(\Omega)}  - (V\phi(t_{n+1}), v_h)_{L^2(\Omega)} \nn \\
	&\quad + (\beta (\widehat{R_h\phi(t_{n})})^3, v_h)_{L^2(\Omega)} - (\beta \phi(t_{n+1})^3, v_h)_{L^2(\Omega)} \nn \\
	 &\quad + \mu[\phi(t_{n+1})] (\phi(t_{n+1}),v_h)_{L^2(\Omega)} - \mu[\widehat{R_h\phi(t_{n})}] (\widehat{R_h\phi(t_{n})},v_h)_{L^2(\Omega)} \nn \\
	 & =: \mathcal{E}_1(v_h) + \mathcal{E}_2(v_h) + \mathcal{E}_3(v_h) +\mathcal{E}_4(v_h) + \mathcal{E}_5(v_h).
\end{align}

The rest of this part is to prove the following estimate of the truncation error $\mathcal{E}(\cdot)$.
\begin{lemma}\label{lem: truncation error}
	The truncation error of the numerical scheme \eqref{equ: numerical scheme 1}--\eqref{equ: numerical scheme 3} is given by
	\begin{equation}
		|\mathcal{E}(v_h)| \le C  (\tau + h^2) \|\nabla v_h\|_{L^2(\Omega)} \qquad \forall v_h \in S_h.
	\end{equation}
\end{lemma}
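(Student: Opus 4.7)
The plan is to estimate each of $\mathcal{E}_1,\ldots,\mathcal{E}_5$ individually and then sum. The two workhorses are the standard Ritz-projection bounds $\|R_h u-u\|_{L^2(\Omega)}\le Ch^2\|u\|_{H^2(\Omega)}$ and $\|R_h u-u\|_{H^1(\Omega)}\le Ch\|u\|_{H^2(\Omega)}$, together with the immediate consequence $\bigl|\|R_h\phi(t)\|_{L^2(\Omega)}-1\bigr|\le Ch^2$ valid uniformly in $t\in[0,T]$, and hence $\|\widehat{R_h\phi(t)}-\phi(t)\|_{L^2(\Omega)}\le Ch^2$ as well. The assumed regularity of $\phi$ delivers uniform bounds on $\|\phi(t)\|_{H^2}$, $\|\phi_t(t)\|_{H^2}$ and $\|\phi_{tt}(t)\|_{L^2}$, which I will absorb into the generic constant $C$.

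I expect the main obstacle to be $\mathcal{E}_1$, where the nonlinear normalization obscures the time difference quotient. My remedy is to insert and subtract $\bigl(R_h\phi(t_{n+1})-R_h\phi(t_n)\bigr)/\tau$. The ``unnormalized'' piece decomposes further as $R_h\bigl[(\phi(t_{n+1})-\phi(t_n))/\tau-\phi_t(t_{n+1})\bigr] + \bigl(R_h\phi_t(t_{n+1})-\phi_t(t_{n+1})\bigr)$, yielding $O(\tau)$ in $L^2$ by Taylor expansion (using $\phi_{tt}\in L^2$) together with $L^2$-stability of $R_h$, and $O(h^2)$ by the Ritz estimate applied to $\phi_t$. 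The ``normalization'' piece has the form $(a_{n+1}^{-1}-1)(R_h\phi(t_{n+1})-R_h\phi(t_n))/\tau + (a_{n+1}^{-1}-a_n^{-1})R_h\phi(t_n)/\tau$, where $a_n=\|R_h\phi(t_n)\|_{L^2}$. The first summand is $O(h^2)$ using $|a_{n+1}^{-1}-1|\le Ch^2$ and uniform $L^2$-boundedness of the time difference of $R_h\phi$. For the second summand, differentiating $\|R_h\phi(t)\|_{L^2}^2$ and using $\frac{d}{dt}\|\phi(t)\|_{L^2}^2 = 0$ yields the identity
\begin{equation*}
\frac{d}{dt}\|R_h\phi(t)\|_{L^2}^2 = 2(R_h\phi - \phi, R_h\phi_t)_{L^2(\Omega)} + 2(\phi, R_h\phi_t - \phi_t)_{L^2(\Omega)},
\end{equation*}
whose right-hand side is $O(h^2)$ by two applications of Ritz error, so $|a_{n+1}-a_n|\le C\tau h^2$ and the second summand is also $O(h^2)$. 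Assembling these and invoking Poincar\'e gives $|\mathcal{E}_1(v_h)|\le C(\tau+h^2)\|\nabla v_h\|_{L^2}$.

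The term $\mathcal{E}_2$ is immediate: factor out the scalar $\bigl(\|R_h\phi(t_{n+1})\|_{L^2}^{-1}-1\bigr)$, which is $O(h^2)$, and use Cauchy--Schwarz together with the uniform $H^1$ bound on $\phi$. For $\mathcal{E}_3$ and $\mathcal{E}_4$ I decompose
\begin{equation*}
\widehat{R_h\phi(t_n)}-\phi(t_{n+1}) = \bigl(\widehat{R_h\phi(t_n)}-\phi(t_n)\bigr) + \bigl(\phi(t_n)-\phi(t_{n+1})\bigr),
\end{equation*}
whose $L^2$ norms are $O(h^2)$ and $O(\tau)$ respectively, while the sum is also controlled in $H^1$ by $O(h+\tau)$. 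For $\mathcal{E}_3$, pairing against $V\in L^2(\Omega)$ and $v_h\in L^6(\Omega)$ through a H\"older chain $(Vw,v_h)\le\|V\|_{L^2}\|w\|_{L^3}\|v_h\|_{L^6}$, combined with the Sobolev embedding $H^1_0(\Omega)\hookrightarrow L^6(\Omega)$ and the boundedness of $\phi_t$ in $L^\infty(0,T;L^\infty(\Omega))$ (via $H^2\hookrightarrow L^\infty$ in three dimensions), produces the required factor $\|\nabla v_h\|_{L^2}$. For $\mathcal{E}_4$ the cubic is factored as $a^3-b^3=(a-b)(a^2+ab+b^2)$; the second factor is uniformly bounded in $L^3(\Omega)$ by the $H^1$ bounds on $\phi$ and $\widehat{R_h\phi}$, and $v_h\in L^6$ again closes the estimate.

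Finally, $\mathcal{E}_5$ is rewritten as
\begin{equation*}
\mu[\phi(t_{n+1})](\phi(t_{n+1})-\widehat{R_h\phi(t_n)},v_h)_{L^2(\Omega)} + \bigl(\mu[\phi(t_{n+1})] - \mu[\widehat{R_h\phi(t_n)}]\bigr)(\widehat{R_h\phi(t_n)},v_h)_{L^2(\Omega)}.
\end{equation*}
The first summand inherits the $O(\tau+h^2)$ bound already derived for $\widehat{R_h\phi(t_n)}-\phi(t_{n+1})$, using $\mu[\phi]$ uniformly bounded. For the second summand, a further split through $\mu[\phi(t_n)]$ combines Lemma \ref{lem: technical lemma 1} (Lipschitz dependence, contributing $C\|\phi(t_{n+1})-\phi(t_n)\|_{H^1}\le C\tau$) with Lemma \ref{lem: technical lemma 2} (projection estimate, contributing $Ch^2$); Cauchy--Schwarz plus Poincar\'e then converts $\|v_h\|_{L^2}$ into $\|\nabla v_h\|_{L^2}$. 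Summing the five contributions gives the stated bound $|\mathcal{E}(v_h)|\le C(\tau+h^2)\|\nabla v_h\|_{L^2}$.
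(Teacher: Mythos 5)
Your proposal is correct and follows essentially the same route as the paper: the same $\mathcal{E}_1$--$\mathcal{E}_5$ splitting, the same reliance on the $O(h^2)$ $L^2$-Ritz error (applied to both $\phi$ and $\phi_t$), Taylor expansion with $\phi_{tt}\in L^2$ for the $O(\tau)$ part, and Lemmas \ref{lem: technical lemma 1} and \ref{lem: technical lemma 2} for $\mathcal{E}_5$. Your treatment of the normalization error in $\mathcal{E}_1$ via the discrete difference $|a_{n+1}-a_n|\le C\tau h^2$ (exploiting $(\phi,\phi_t)_{L^2(\Omega)}=0$) is a discrete analogue of the paper's bound on the continuous time derivative of the normalization factor, and if anything is stated a bit more carefully.
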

\begin{proof}
We will estimate for $\mathcal{E}_j(v_h)$, $j = 1,2,3,4,5$, respectively.
First, we have 
\begin{align}
	|\mathcal{E}_1(v_h)| & \le \left\| \frac{\widehat{R_h\phi(t_{n+1})} - \widehat{R_h\phi(t_{n})}}{\tau} - \frac{R_h\phi(t_{n+1}) - R_h\phi(t_n)}{\tau} \right\|_{L^2(\Omega)}\|v_h\|_{L^2(\Omega)} \nn  \\
	& \quad + \left\| \frac{R_h\phi(t_{n+1}) - R_h\phi(t_n)}{\tau} - \frac{\phi(t_{n+1}) - \phi(t_n)}{\tau}  \right\|_{L^2(\Omega)}\|v_h\|_{L^2(\Omega)} \nn \\
	& \quad + \left\| \frac{\phi(t_{n+1}) - \phi(t_n)}{\tau} - \phi_t(t_{n+1})  \right\|_{L^2(\Omega)}\|v_h\|_{L^2(\Omega)} \nn \\
	& \le I_1\|v_h\|_{L^2(\Omega)} + Ch^2\|v_h\|_{L^2(\Omega)}   + Ch^2  \left\|\frac{\phi(t_{n+1}) - \phi(t_n)}{\tau}  \right\|_{H^2(\Omega)}\|v_h\|_{L^2(\Omega)} + C \tau \|v_h\|_{L^2(\Omega)} \nn \\
	& \le I_1 \|v_h\|_{L^2(\Omega)} + Ch^2 \|v_h\|_{L^2(\Omega)} + C \tau \|v_h\|_{L^2(\Omega)},
\end{align}
where 
\begin{align*}
	 I_1 = \left\| \frac{(\phi(t_{n+1})/\|R_h\phi(t_{n+1})\|_{L^2(\Omega)} - \phi(t_{n+1})) - (\phi(t_n)/\|R_h\phi(t_n)\|_{L^2(\Omega)} - \phi(t_n))}{\tau}  \right\|_{L^2(\Omega)}.
\end{align*}
It is seen that
\begin{align*}
	I_1 \le \max_{t_n \le t \le t_{n+1}}\Big\| \partial_t\big(\phi(t)/\|R_h\phi(t)\|_{L^2(\Omega)} - \phi(t)\big) \Big\|_{L^2(\Omega)}.
\end{align*}
Since
\begin{align*}
	&\partial_t(\phi(t)/\|R_h\phi(t)\|_{L^2(\Omega)} - \phi(t)) \\
	  = &\partial_t \phi(t) (1/\|R_h\phi(t)\|_{L^2(\Omega)} - 1) + \phi \left( \frac{1}{\|R_h\phi(t)\|_{L^2(\Omega)}} - \frac{1}{\|\phi(t)\|_{L^2(\Omega)}}  \right)' \\
	 =& \phi_t(t) (1/\|R_h\phi(t)\|_{L^2(\Omega)} - 1) + \phi\left( \frac{(R_h\phi(t), R_h\phi_t(t))_{L^2(\Omega)}}{\|R_h\phi(t)\|_{L^2(\Omega)}^3} - \frac{(\phi(t), \phi_t(t))_{L^2(\Omega)}}{\|\phi(t)\|_{L^2(\Omega)}^3} \right),
\end{align*}
we are able to estimate as follows
\begin{align*}
    \| \phi_t(t) (1/\|R_h\phi(t)\|_{L^2(\Omega)} - 1)\|_{L^2(\Omega)} \le C \|\phi(t) - R_h\phi(t)\|_{L^2(\Omega)} \le C h^2,
\end{align*}
and
\begin{align*}
	& \left| \frac{(R_h\phi(t), R_h\phi_t(t))_{L^2(\Omega)}}{\|R_h\phi(t)\|_{L^2(\Omega)}^3} - \frac{(\phi(t), \phi_t(t))_{L^2(\Omega)}}{\|\phi(t)\|_{L^2(\Omega)}^3} \right| \\
	   \le & \left| \frac{(R_h\phi(t), R_h\phi_t(t))_{L^2(\Omega)}}{\|R_h\phi(t)\|_{L^2(\Omega)}^3} - \frac{(\phi(t), \phi_t(t))_{L^2(\Omega)}}{\|R_h\phi(t)\|_{L^2(\Omega)}^3} \right| \\
	    & + \left| \frac{(\phi(t), \phi_t(t))_{L^2(\Omega)}}{\|R_h\phi(t)\|_{L^2(\Omega)}^3} - \frac{(\phi(t), \phi_t(t))_{L^2(\Omega)}}{\|\phi(t)\|_{L^2(\Omega)}^3} \right| \\
	   \le & C \left| (R_h\phi(t), R_h\phi_t(t))_{L^2(\Omega)} -  (\phi(t), \phi_t(t))_{L^2(\Omega)} \right| + C h^2 \\
	   \le & C \left| (R_h\phi(t), R_h\phi_t(t) - \phi_t(t))_{L^2(\Omega)}  \right| + C \left| (R_h\phi(t) - \phi(t), \phi_t)_{L^2(\Omega)}  \right| +Ch^2 \\
	   \le & C h^2\|\phi_t\|_{H^2(\Omega)} + Ch^2 \|\phi\|_{H^2(\Omega)} + Ch^2,
\end{align*}
we get $I_1 \le C h^2$, which implies 
\begin{equation}
	|\mathcal{E}_1(v_h)| \le C (\tau + h^2)\|v_h\|_{L^2(\Omega)}.
\end{equation}

Second, it is easy to show that 
\begin{align}
 |\mathcal{E}_2(v_h)| &=\left| \left( \frac{1}{\|R_h\phi(t_{n+1})\|_{L^2(\Omega)}} - 1 \right)(\nabla \phi(t_{n+1}), \nabla v_h)_{L^2(\Omega)} \right| \nn \\
   & \le \left| \frac{1}{\|R_h\phi(t_{n+1})\|_{L^2(\Omega)}} - 1 \right|\|\nabla \phi(t_{n+1})\|_{L^2(\Omega)} \|\nabla v_h\|_{L^2(\Omega)} \le C h^2\|v_h\|_{H^1(\Omega)}.
\end{align} 

Third, by Sololev embedding inequality, there holds
\begin{align}
	|\mathcal{E}_3(v_h)| & = \left|(V\widehat{R_h\phi(t_{n})}, v_h)_{L^2(\Omega)}  - (V\phi(t_{n+1}), v_h)_{L^2(\Omega)} \right| \nn \\
    & \le \left|(V\widehat{R_h\phi(t_{n})} - V\widehat{R_h\phi(t_{n+1})}, v_h )_{L^2(\Omega)}\right|  + \left|(V\widehat{R_h\phi(t_{n+1})} - V\phi(t_{n+1}), v_h )_{L^2(\Omega)}\right| \nn \\
	& \le \|V\|_{L^2(\Omega)}\left\|\widehat{R_h\phi(t_{n+1})} - \widehat{R_h\phi(t_{n})}\right\|_{L^3(\Omega)}\|v_h\|_{L^6(\Omega)} \nn  \\
	& \quad + \|V\|_{L^2(\Omega)}\|\widehat{R_h\phi(t_{n+1})} - \phi(t_{n+1})\|_{L^3(\Omega)}\|v_h\|_{L^6(\Omega)} \nn \\
	& \le C \tau \|v_h\|_{H^1(\Omega)} + C h^2\|v_h\|_{H^1(\Omega)}.
\end{align}

Fourth, 
\begin{align}
	|\mathcal{E}_4(v_h)| & = \left|(\beta (\widehat{R_h\phi(t_{n})})^3, v_h)_{L^2(\Omega)} - (\beta \phi(t_{n+1})^3, v_h)_{L^2(\Omega)} \right|\nn \\ & \le |(\beta (\widehat{R_h\phi(t_{n})})^3-\beta (\widehat{R_h\phi(t_{n+1})})^3, v_h)_{L^2(\Omega)}| \nn \\
	& \quad + |(\beta (\widehat{R_h\phi(t_{n+1})} )^3-\beta \phi(t_{n+1})^3, v_h)_{L^2(\Omega)}| \nn \\
	& \le C \tau \|v_h\|_{L^2(\Omega)} + C h^2\|v_h\|_{H^1(\Omega)}.
\end{align}

Finally, by Lemmas \ref{lem: technical lemma 1} and \ref{lem: technical lemma 2}, we can estimate as follows
\begin{align}
	|\mathcal{E}_5(v_h)|  & = \left| \mu[\phi(t_{n+1})] (\phi(t_{n+1}),v_h)_{L^2(\Omega)} - \mu[\widehat{R_h\phi(t_{n})}] (\widehat{R_h\phi(t_{n})},v_h)_{L^2(\Omega)} \right| \nn \\ & \le |\mu[\phi(t_{n+1})] - \mu[\phi(t_{n})]||(\phi(t_{n+1}),v_h)_{L^2(\Omega)}| \nn \\
	& \quad + | \mu[\phi(t_{n})]||(\phi(t_{n+1}) - \phi(t_n),v_h)_{L^2(\Omega)}| \nn \\
	& \quad + |\mu[\phi(t_{n})] - \mu[\widehat{R_h\phi(t_{n})}]||(\phi(t_{n}),v_h)_{L^2(\Omega)}| \nn \\
	& \quad + |\mu[\widehat{R_h\phi(t_{n})}]||(\phi(t_{n}) - \widehat{R_h\phi(t_{n})},v_h)_{L^2(\Omega)}| \nn \\
	& \le C \|\phi(t_{n+1}) - \phi(t_{n})\|_{H^1(\Omega)}\|v_h\|_{L^2(\Omega)} + C\tau\|v_h\|_{L^2(\Omega)} \nn \\
	& \quad  + C h^2 \|v_h\|_{L^2(\Omega)} + C h^2\|v_h\|_{L^2(\Omega)} \nn \\
	& \le C \tau \|v_h\|_{L^2(\Omega)} + Ch^2\|v_h\|_{L^2(\Omega)}.
\end{align}

Combining estimates of $\mathcal{E}_1$--$\mathcal{E}_5$, we arrive at the given conclusion.
\end{proof}

\subsubsection{Error equation}

We define the following two types of error functions:
\begin{equation*}
	e_h^n = \widehat{R_h\phi(t_{n})} - \phi_h^{n}
\end{equation*} 
for $0 \le n \le N$, and
\begin{equation*}
	\tilde{e}^n_h = \widehat{R_h\phi(t_{n})} - \tilde{\phi}_h^n
\end{equation*}
for $1 \le n \le N $, with $\tilde{e}_h^0 = 0$. Then the error equation can be written as: For $0 \le n \le N-1$ 
\begin{align}
	& \left( \frac{\tilde{e}_h^{n+1} - e_h^n}{\tau}, v_h \right)_{L^2(\Omega)} + (\nabla \tilde{e}_h^{n+1}, \nabla v_h)_{L^2(\Omega)} \nn \\
	= & - (V e_h^n, v_h)_{L^2(\Omega)} - (\beta (\widehat{R_h\phi(t_{n})})^3 - \beta (\phi_h^n)^3, v_h)_{L^2(\Omega)} \nn \\
	& + \mu[\widehat{R_h\phi(t_{n})}] (\widehat{R_h\phi(t_{n})},v_h)_{L^2(\Omega)} - \mu[\phi_h^n] (\phi_h^n,v_h)_{L^2(\Omega)}  + \mathcal{E}(v_h) \label{equ: error equation}
\end{align}
holds for any $v_h \in S_h$.

To carry out the error estimate, we do several preparations.
\begin{lemma}\label{lem: key lemma in geometry}
	For $ 0\le n \le N$, there hold
	\begin{align}
		\|e_h^n\|_{L^2(\Omega)} &\le C \|\tilde{e}_h^n\|_{L^2(\Omega)}, \label{equ: key lemma in geometry 1}\\
		\|e_h^n\|_{L^2(\Omega)} &\le \|\tilde{e}_h^n\|_{L^2(\Omega)} + C\|\tilde{e}_h^n\|_{L^2(\Omega)}^3. \label{equ: key lemma in geometry 2}
	\end{align}
\end{lemma}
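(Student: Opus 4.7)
The plan is to reduce both bounds to a single algebraic identity on the Hilbert space $L^2(\Omega)$. Write $a := \widehat{R_h \phi(t_n)}$, $\tilde v := \tilde{\phi}_h^n$, and $c := \phi_h^n = \tilde v / \|\tilde v\|_{L^2(\Omega)}$, so that $\tilde e_h^n = a - \tilde v$ and $e_h^n = a - c$ with $\|a\|_{L^2(\Omega)} = \|c\|_{L^2(\Omega)} = 1$. Expanding $\|a - c\|_{L^2(\Omega)}^2 = 2 - 2(a,\tilde v)_{L^2(\Omega)}/\|\tilde v\|_{L^2(\Omega)}$ and using $2(a,\tilde v)_{L^2(\Omega)} = 1 + \|\tilde v\|_{L^2(\Omega)}^2 - \|\tilde e_h^n\|_{L^2(\Omega)}^2$ yields the key identity
\begin{equation*}
\|e_h^n\|_{L^2(\Omega)}^2 \cdot \|\tilde v\|_{L^2(\Omega)} = \|\tilde e_h^n\|_{L^2(\Omega)}^2 - \bigl(\|\tilde v\|_{L^2(\Omega)} - 1\bigr)^2 .
\end{equation*}
Since $\|a\|_{L^2(\Omega)}=1$, the reverse triangle inequality gives $|\|\tilde v\|_{L^2(\Omega)} - 1| \le \|\tilde e_h^n\|_{L^2(\Omega)}$.

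Setting $\eta := \|\tilde e_h^n\|_{L^2(\Omega)}$ and $\delta := \|\tilde v\|_{L^2(\Omega)} - 1$ so that $|\delta| \le \eta$, the identity simplifies to $\|e_h^n\|_{L^2(\Omega)}^2 = (\eta^2 - \delta^2)/(1 + \delta)$. For the first inequality \eqref{equ: key lemma in geometry 1}, I would first ensure that $\eta$ is small enough (say $\eta \le 1/2$) to guarantee $\|\tilde v\|_{L^2(\Omega)} \ge 1/2$; such a smallness hypothesis will be available inductively in the main convergence argument, possibly at the cost of a mild condition on $\tau$ and $h$. Dropping the nonnegative $\delta^2$ term then yields the crude bound $\|e_h^n\|_{L^2(\Omega)}^2 \le \eta^2 / \|\tilde v\|_{L^2(\Omega)} \le 2\eta^2$.

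For the sharper bound \eqref{equ: key lemma in geometry 2}, I split on the sign of $\delta$. If $\delta \ge 0$ then $(\eta^2 - \delta^2)/(1+\delta) \le \eta^2$ and the conclusion is immediate. If $\delta = -s$ with $s \in (0, \eta]$, a short rearrangement gives
\begin{equation*}
\|e_h^n\|_{L^2(\Omega)}^2 - \eta^2 = \frac{s(\eta^2 - s)}{1 - s}.
\end{equation*}
A one-variable optimization of $s \mapsto s(\eta^2 - s)_+$ on $[0,\eta]$ shows the maximum equals $\eta^4/4$ at $s = \eta^2/2$, and combined with $1-s \ge 1/2$ this gives $\|e_h^n\|_{L^2(\Omega)}^2 \le \eta^2 + C\eta^4$; applying $\sqrt{1+x} \le 1 + x/2$ then delivers \eqref{equ: key lemma in geometry 2}.

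The principal subtlety lies not in the algebra of the identity but in justifying the lower bound $\|\tilde\phi_h^n\|_{L^2(\Omega)} \ge 1/2$ (or some fixed positive constant), since the quotient $(\eta^2 - \delta^2)/(1+\delta)$ blows up if $\|\tilde v\|_{L^2(\Omega)}$ is allowed to vanish. The key observation that makes the sharper bound \eqref{equ: key lemma in geometry 2} nontrivial is that the quadratic correction $\eta^2$ which one would naively expect actually cancels, leaving only a quartic correction; this is precisely what will be needed later to absorb the normalization error into the main error estimate without degrading the optimal $L^2$ rate.
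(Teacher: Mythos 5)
Your argument is correct, and it reaches the same conclusion by a genuinely different route. The paper's proof is trigonometric: it introduces the angle $\alpha$ between $R_h\phi(t_n)$ and $\phi_h^n$, uses that $\|\tilde{e}_h^n\|_{L^2(\Omega)}\ge\sin\alpha$ (the distance from the unit vector to a point on the ray dominates the distance to the ray) together with $\|e_h^n\|_{L^2(\Omega)}=2\sin(\alpha/2)$, and splits on $\alpha\le\pi/6$ versus $\alpha>\pi/6$. You instead derive the exact identity $\|e_h^n\|_{L^2(\Omega)}^2\,\|\tilde{\phi}_h^n\|_{L^2(\Omega)}=\|\tilde{e}_h^n\|_{L^2(\Omega)}^2-(\|\tilde{\phi}_h^n\|_{L^2(\Omega)}-1)^2$ and optimize in the scalar $\delta=\|\tilde{\phi}_h^n\|_{L^2(\Omega)}-1$; this makes the cancellation of the quadratic correction completely transparent and even yields the slightly sharper constant $C=1/4$ in \eqref{equ: key lemma in geometry 2}, at the price of an explicit lower bound on $\|\tilde{\phi}_h^n\|_{L^2(\Omega)}$. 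One remark on that last point: you should not defer the hypothesis $\eta=\|\tilde{e}_h^n\|_{L^2(\Omega)}\le 1/2$ to the later induction, since the lemma is invoked inside that induction and is stated unconditionally; the clean fix is the same trivial case split the paper uses for $\alpha>\pi/6$, namely that when $\eta\ge 1/2$ both bounds follow immediately from $\|e_h^n\|_{L^2(\Omega)}\le\|\widehat{R_h\phi(t_n)}\|_{L^2(\Omega)}+\|\phi_h^n\|_{L^2(\Omega)}=2\le 4\eta\le \eta+16\eta^3$. With that one-line addition your proof is complete and self-contained.
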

\begin{proof}
    Left in Appendix \ref{sec: appendix}.
\end{proof}
\begin{lemma}\label{lem: induction}
There exist $\tau_0$ and $h_0$ sufficiently small such that for $\tau \le \tau_0$ and $h \le h_0$, with an additional ``inverse" CFL condition $\tau \ge \kappa h^4$ where $\kappa$ is a constant sufficiently large,  such that
\begin{align}
	& \|\tilde{e}_h^n\|_{L^2(\Omega)} \le \sqrt{\tau}, \label{equ: 1 in induction} \\
	& \|\tilde{e}_h^n\|_{H^1(\Omega)} \le 1, \label{equ: 2 in induction}\\
	& \|\tilde{e}_h^n\|_{L^{\infty}(\Omega)} \le 1, \label{equ: 3 in induction}
\end{align}
for $0 \le n \le N$.
\end{lemma}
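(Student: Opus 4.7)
The plan is to argue by induction on $n$. The base case $n = 0$ is immediate since $\tilde{e}_h^0 = 0$. For the inductive step, assume that \eqref{equ: 1 in induction}--\eqref{equ: 3 in induction} hold for every $0 \le k \le n$.

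First I would upgrade the induction hypothesis to a uniform $L^\infty$ control on $\phi_h^k$ for $k \le n$. Since $\tilde{\phi}_h^k = \widehat{R_h\phi(t_k)} - \tilde{e}_h^k$, the bound $\|\tilde{e}_h^k\|_{L^2(\Omega)} \le \sqrt{\tau}$ gives $\|\tilde{\phi}_h^k\|_{L^2(\Omega)} \ge 1/2$ for $\tau$ small, while the Sobolev embedding $H^2(\Omega) \hookrightarrow L^\infty(\Omega)$ combined with standard finite element approximation yields $\|\widehat{R_h\phi(t_k)}\|_{L^\infty(\Omega)} \le C$; together with $\|\tilde{e}_h^k\|_{L^\infty(\Omega)} \le 1$ this produces $\|\phi_h^k\|_{L^\infty(\Omega)} \le C$ uniformly in $k$, $\tau$, $h$. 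The key consequence is that the cubic difference in the error equation can be factored as $(\widehat{R_h\phi(t_n)})^3 - (\phi_h^n)^3 = Q\, e_h^n$ with $\|Q\|_{L^\infty(\Omega)} \le C$.

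Next I would test \eqref{equ: error equation} with $v_h = \tilde{e}_h^{n+1}$ and apply the polarization identity to the discrete time derivative. The potential contribution is controlled via $\|V\|_{L^2(\Omega)}\|e_h^n\|_{L^3(\Omega)}\|\tilde{e}_h^{n+1}\|_{L^6(\Omega)}$, with the $L^6$ factor absorbed into $\|\nabla \tilde{e}_h^{n+1}\|_{L^2(\Omega)}^2$ via $H^1 \hookrightarrow L^6$ and Young's inequality; the cubic term becomes $C\|e_h^n\|_{L^2(\Omega)}\|\tilde{e}_h^{n+1}\|_{L^2(\Omega)}$ using the $L^\infty$ bound above; the $\mu[\cdot]$-differences are handled by Lemma \ref{lem: technical lemma 1}; and the truncation by Lemma \ref{lem: truncation error}. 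After using \eqref{equ: key lemma in geometry 1} to replace every $\|e_h^k\|_{L^2(\Omega)}$ by $\|\tilde{e}_h^k\|_{L^2(\Omega)}$, summation from $k = 0$ to $n$ followed by the discrete Gr\"onwall inequality should produce the a priori estimate
\[
\|\tilde{e}_h^{n+1}\|_{L^2(\Omega)}^2 + \tau \sum_{k=0}^{n}\|\nabla \tilde{e}_h^{k+1}\|_{L^2(\Omega)}^2 \le C(T)(\tau + h^2)^2.
\]

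To close the induction, the inverse CFL $\tau \ge \kappa h^4$ with $\kappa$ sufficiently large gives $h^2 \le \kappa^{-1/2}\sqrt{\tau}$, so $C(\tau + h^2) \le \sqrt{\tau}$ once $\tau_0$ is small and $\kappa$ is large, verifying \eqref{equ: 1 in induction}. The bounds \eqref{equ: 2 in induction} and \eqref{equ: 3 in induction} then follow by applying the three-dimensional inverse estimates $\|v_h\|_{H^1(\Omega)} \le Ch^{-1}\|v_h\|_{L^2(\Omega)}$ and $\|v_h\|_{L^\infty(\Omega)} \le Ch^{-3/2}\|v_h\|_{L^2(\Omega)}$ to $\tilde{e}_h^{n+1}$, with $\tau_0$, $h_0$, $\kappa$ tuned so that $h^{-1}\sqrt{\tau}$ and $h^{-3/2}\sqrt{\tau}$ remain below $1$ in the admissible regime. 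The main obstacle is the triple bootstrap itself: the cubic nonlinearity forces the $L^\infty$ hypothesis on $\phi_h^n$ to enter the energy estimate that must in turn supply the new $L^\infty$ bound on $\tilde{e}_h^{n+1}$, and that last step passes through the $h^{-3/2}$-amplifying 3D inverse inequality, so the inverse CFL is the precise mechanism that makes $\sqrt\tau$ dominate $h^2$ in the a priori bound and keeps all three induction targets self-consistent.
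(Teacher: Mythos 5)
Your induction structure, the uniform $L^\infty$ control on $\phi_h^k$ extracted from the induction hypothesis, the energy estimate obtained by testing the error equation with $\tilde{e}_h^{n+1}$, the discrete Gr\"onwall step, and the verification of \eqref{equ: 1 in induction} from $h^2 \le \kappa^{-1/2}\sqrt{\tau}$ all match the paper's argument. The gap is in how you close \eqref{equ: 2 in induction} and \eqref{equ: 3 in induction}. You propose the inverse estimates $\|v_h\|_{H^1(\Omega)} \le Ch^{-1}\|v_h\|_{L^2(\Omega)}$ and $\|v_h\|_{L^\infty(\Omega)} \le Ch^{-3/2}\|v_h\|_{L^2(\Omega)}$ and claim the parameters can be tuned so that $h^{-1}\sqrt{\tau}$ and $h^{-3/2}\sqrt{\tau}$ stay below $1$. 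But the hypothesis $\tau \ge \kappa h^4$ is a \emph{lower} bound on $\tau$; it places no upper bound on $\tau$ in terms of $h$. The admissible regime contains, for instance, $\tau = \tau_0$ fixed and $h \to 0$, where $h^{-1}\sqrt{\tau} \to \infty$. Making your route work would require $\tau \lesssim h^2$ (for the $H^1$ bound) and $\tau \lesssim h^3$ (for the $L^\infty$ bound), i.e., a forward CFL restriction that the lemma deliberately avoids.

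The paper closes these two bounds differently. For \eqref{equ: 2 in induction} it uses the gradient term that you already carry on the left of your a priori estimate: $\tau\|\nabla\tilde{e}_h^{n+1}\|_{L^2(\Omega)}^2 \le C(\tau+h^2)^2$ gives $\|\nabla\tilde{e}_h^{n+1}\|_{L^2(\Omega)} \le C(\sqrt{\tau} + h^2/\sqrt{\tau})$, and the inverse CFL condition makes $h^2/\sqrt{\tau} \le \kappa^{-1/2}$ small --- this is the actual role of $\tau \ge \kappa h^4$ beyond step \eqref{equ: 1 in induction}. For \eqref{equ: 3 in induction} one cannot pass from $H^1$ to $L^\infty$ in three dimensions, so the paper first proves the discrete $H^2$ bound $\|\Delta_h\tilde{e}_h^{n+1}\|_{L^2(\Omega)} \le C$ by a case split ($\tau \le h^2$: inverse inequality applied to the $L^2$ estimate; $\tau \ge h^2$: solve the error equation for $\Delta_h\tilde{e}_h^{n+1}$ and bound each right-hand term in $L^2$, where $\tau^{-1}(\tau+h^2)\le C$ precisely because $\tau\ge h^2$), and then applies the discrete Agmon inequality \eqref{equ: discrete interpolation inequality} to obtain $\|\tilde{e}_h^{n+1}\|_{L^\infty(\Omega)} \le C(\tau+h^2)^{1/4}$. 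This elliptic-regularity-type intermediate step is the missing idea in your proposal.
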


We will prove Lemma \ref{lem: induction} by mathematical induction, for $n = 0$, since $\tilde{e}_h^0 = 0$, \eqref{equ: 1 in induction}--\eqref{equ: 3 in induction} naturally hold. Assume \eqref{equ: 1 in induction}--\eqref{equ: 3 in induction} hold for $0 \le k \le n$, in the next subsection,  we will show that \eqref{equ: 1 in induction}--\eqref{equ: 3 in induction} also hold for $k = n+1$. Before that, we have the following results under assumptions of the mathematical induction.
\begin{lemma}
	For $0 \le k \le n$, the following estimates hold for $e_h^k$:
	\begin{align}\label{equ: result of induction 1}
		\|e_h^k\|_{H^1(\Omega)} \le C \|\tilde{e}_h^{k}\|_{H^1(\Omega)} + Ch^2
	\end{align}
    and 
    \begin{align}\label{equ: result of induction 2}
		\|e_h^k\|_{L^{\infty}(\Omega)} \le C.
	\end{align}
\end{lemma}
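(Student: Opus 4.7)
The plan is to reduce both estimates to the identity $\phi_h^k = \tilde\phi_h^k/\|\tilde\phi_h^k\|_{L^2}$ (valid for $k\ge 1$; the case $k=0$ is trivial since $e_h^0=\tilde e_h^0=0$ by the choice of $\phi_h^0$ in \eqref{equ: numerical scheme 3}). Abbreviate $a:=\widehat{R_h\phi(t_k)}$, which is a unit vector in $L^2$, and $b:=\tilde\phi_h^k$, so that $\tilde e_h^k = a-b$ and $e_h^k = a-b/\|b\|_{L^2}$. A one-line algebraic rearrangement yields the decomposition
\begin{equation*}
    e_h^k \;=\; \frac{a-b}{\|b\|_{L^2}} \;+\; a\cdot\frac{\|b\|_{L^2}-1}{\|b\|_{L^2}},
\end{equation*}
and $\bigl|\|b\|_{L^2}-1\bigr|=\bigl|\|b\|_{L^2}-\|a\|_{L^2}\bigr|\le \|\tilde e_h^k\|_{L^2}$.

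The first step is to pin down the denominator. The induction hypothesis \eqref{equ: 1 in induction} gives $\|\tilde e_h^k\|_{L^2}\le\sqrt{\tau}$, so choosing $\tau_0$ small enough we may assume $\|b\|_{L^2}\ge 1/2$ throughout. Next I would record that $\|a\|_{H^1}$ is bounded by a constant independent of $h$: $H^1$-stability of the Ritz projection gives $\|R_h\phi(t_k)\|_{H^1}\le C\|\phi(t_k)\|_{H^1}$, the latter being bounded by Assumption 4.1 (or by the energy-diminishing property), and $\|R_h\phi(t_k)\|_{L^2}=1+O(h^2)$ by the standard $L^2$ error estimate. Taking $H^1$-norms in the displayed decomposition then yields
\begin{equation*}
    \|e_h^k\|_{H^1}\;\le\;2\,\|\tilde e_h^k\|_{H^1} + 2\|a\|_{H^1}\,\|\tilde e_h^k\|_{L^2}\;\le\;C\,\|\tilde e_h^k\|_{H^1},
\end{equation*}
which is stronger than \eqref{equ: result of induction 1}. (Equivalently, one may split through $R_h\phi(t_k)$ directly; the factor $1/\|R_h\phi(t_k)\|_{L^2}-1=O(h^2)$ coming from the renormalisation of the projection is the origin of the $Ch^2$ remainder in the authors' statement.)

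For the $L^\infty$ bound, I would use the same decomposition: since $\phi(t_k)\in H^2(\Omega)\hookrightarrow L^\infty(\Omega)$ in three dimensions and the Lagrange / Ritz projection onto $S_h$ is $L^\infty$-stable on this class, $\|a\|_{L^\infty}\le C$. The induction hypothesis \eqref{equ: 3 in induction} gives $\|b\|_{L^\infty}\le\|a\|_{L^\infty}+\|\tilde e_h^k\|_{L^\infty}\le C$, and with $\|b\|_{L^2}\ge 1/2$ we obtain $\|\phi_h^k\|_{L^\infty}=\|b\|_{L^\infty}/\|b\|_{L^2}\le C$, whence $\|e_h^k\|_{L^\infty}\le \|a\|_{L^\infty}+\|\phi_h^k\|_{L^\infty}\le C$.

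The only real point to be careful about is the control of $\|a\|_{H^1}$ and $\|a\|_{L^\infty}$ by an $h$-independent constant, i.e.\ the $H^1$- and $L^\infty$-stability of the (renormalised) Ritz projection on the regularity class supplied by Assumption 4.1; once that is in place, the rest is a one-line manipulation of the normalisation formula combined with the smallness of $\|\tilde e_h^k\|_{L^2}$ coming from the induction hypothesis. There is no delicate PDE estimate involved in this lemma.
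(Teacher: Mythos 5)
Your proof is correct and follows essentially the same route as the paper's: decompose $e_h^k$ into $\tilde e_h^k$ plus the normalization correction, control the latter by $\bigl|\|\tilde\phi_h^k\|_{L^2(\Omega)}-1\bigr|$ times a bounded quantity, and invoke the induction hypotheses together with the $H^1$- and $L^{\infty}$-stability of the (renormalized) Ritz projection. Your only deviation is that you compare $\|\tilde\phi_h^k\|_{L^2(\Omega)}$ directly with $\|\widehat{R_h\phi(t_k)}\|_{L^2(\Omega)}=1$ rather than detouring through $\|\phi(t_k)\|_{L^2(\Omega)}$ as the paper does, which removes the $Ch^2$ remainder and yields the marginally stronger bound $\|e_h^k\|_{H^1(\Omega)}\le C\|\tilde e_h^k\|_{H^1(\Omega)}$; this is harmless since the paper's stated estimate is weaker.
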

\begin{proof}
	By directly calculating and assumption \eqref{equ: 2 in induction}
	\begin{align*}
		\|e_h^k\|_{H^1(\Omega)} &\le  \|\tilde{e}_h^{k}\|_{H^1(\Omega)} + \|\phi_h^k - \tilde{\phi}_h^k\|_{H^1(\Omega)} = \|\tilde{e}_h^{k}\|_{H^1(\Omega)} + \left|\frac{1}{\|\tilde{\phi}_h^k\|_{L^2(\Omega)} } -1\right|\|\tilde{\phi}_h^k\|_{H^1(\Omega)} \\
		& \le \|\tilde{e}_h^{k}\|_{H^1(\Omega)} + C \|\phi(t_k) - \tilde{\phi}_h^k\|_{L^2(\Omega)} \\
		& \le \|\tilde{e}_h^{k}\|_{H^1(\Omega)} + C \|\phi(t_k) - \widehat{R_h\phi(t_k)}\|_{L^2(\Omega)} + C \|\widehat{R_h\phi(t_k)} - \tilde{\phi}_h^k\|_{L^2(\Omega)}  \\
		& \le C \|\tilde{e}_h^{k}\|_{H^1} + Ch^2.
	\end{align*}
This completes the proof of \eqref{equ: result of induction 1}. The proof of \eqref{equ: result of induction 2} follows from a similar argument with assumption \eqref{equ: 3 in induction}.
\end{proof}

\subsubsection{Proof of Lemma \ref{lem: induction}}

Taking $v_h = \tilde{e}_h^{n+1}$ in the error equation \eqref{equ: error equation} yields 
\begin{align*}
	& \frac{\|\tilde{e}_h^{n+1}\|_{L^2(\Omega)}^2 - \|e_h^n\|_{L^2(\Omega)}^2}{2\tau} + \|\nabla \tilde{e}_h^{n+1}\|_{L^2(\Omega)}^2 \\
	& \le \|V\|_{L^{2}(\Omega)}\|e_h^n\|_{L^3(\Omega)}\|\tilde{e}_h^{n+1}\|_{L^6(\Omega)} \\
	&\quad + C \|(\widehat{R_h\phi(t_n)})^2 + \widehat{R_h\phi(t_n)}\phi_h^n + (\phi_h^n)^2\|_{L^3(\Omega)}\|e_h^n\|_{L^2(\Omega)}\|\tilde{e}_h^{n+1}\|_{L^6(\Omega)} \\
	& \quad + |\mu[\widehat{R_h\phi(t_n)}]| |(e_h^n,\tilde{e}_h^{n+1})_{L^2(\Omega)}| + |\mu[\phi_h^n] - \mu[\widehat{R_h\phi(t_n)}]||(\phi_h^n,\tilde{e}_h^{n+1})_{L^2(\Omega)}| + |\mathcal{E}(\tilde{e}_h^{n+1})| \\
	&=: \mathcal{E}_1 + \mathcal{E}_2 + \mathcal{E}_3 + \mathcal{E}_4 + |\mathcal{E}(\tilde{e}_h^{n+1})|.
\end{align*}

Applying Young's inequality, we may estimate as follows, 
\begin{align*}
	\mathcal{E}_1 & \le C \|e_h^n\|_{L^2(\Omega)}^{1/2}\|e_h^n\|_{L^6(\Omega)}^{1/2}\|\tilde{e}_h^{n+1}\|_{H^1(\Omega)}  \le C \|\tilde{e}_h^{n}\|_{L^2(\Omega)}^{1/2}\|e_h^n\|_{H^1(\Omega)}^{1/2}\|\tilde{e}_h^{n+1}\|_{H^1(\Omega)} \\
	& \le C \|\tilde{e}_h^{n}\|_{L^2(\Omega)}^{1/2} (\|\tilde{e}_h^{n}\|_{H^1(\Omega)}^{1/2} + h)\|\tilde{e}_h^{n+1}\|_{H^1(\Omega)} \\
	& \le C_{\eps} \|\tilde{e}_h^{n}\|_{L^2(\Omega)}^2 + \eps \|\tilde{e}_h^{n}\|_{H^1(\Omega)}^2 + \eps \|\tilde{e}_h^{n+1}\|_{H^1(\Omega)}^2 + C_{\eps} h^4,
\end{align*}
here $\eps$ is a small constant that to be determined later. It is easy to see that
\begin{align*}
	\mathcal{E}_2  \le C_{\eps} \|\tilde{e}_h^{n}\|_{L^2(\Omega)}^2 + \eps \|\tilde{e}_h^{n+1}\|_{H^1(\Omega)}^2,
\end{align*}
and
\begin{align*}
	\mathcal{E}_3 & \le C \|\tilde{e}_h^n\|_{L^2(\Omega)}^2 + C \|\tilde{e}_h^{n+1}\|_{L^2(\Omega)}^2.
\end{align*}
As for $\mathcal{E}_4$, we have
\begin{align*}
	\mathcal{E}_4 & \le C \|e_h^n\|_{H^1(\Omega)}\|\tilde{e}_h^{n+1}\|_{L^2(\Omega)} \le C(\|\tilde{e}_h^n\|_{H^1(\Omega)} + h^2)\|\tilde{e}_h^{n+1}\|_{L^2(\Omega)} \\
	& \le \eps \|\tilde{e}_h^n\|_{H^1(\Omega)}^2 + C_{\eps} h^4 + C_{\eps} \|\tilde{e}_h^{n+1}\|_{L^2(\Omega)}^2.
\end{align*}

Combining estimates of $\mathcal{E}_1$--$\mathcal{E}_4$, using Lemma \ref{lem: truncation error}, equations \eqref{equ: key lemma in geometry 2} and \eqref{equ: 1 in induction},  we obtain 
\begin{align}
	& \frac{\|\tilde{e}_h^{n+1}\|_{L^2(\Omega)}^2 - \|\tilde{e}_h^n\|_{L^2(\Omega)}^2}{2\tau} + \|\nabla \tilde{e}_h^{n+1}\|_{L^2(\Omega)}^2 \nn \\
	& \le C_{\eps} (\tau^2 + h^4) + C_{\eps} (\|\tilde{e}_h^n\|_{L^2(\Omega)}^2 + \|\tilde{e}_h^{n+1}\|_{L^2(\Omega)}^2) + \eps(\|\tilde{e}_h^n\|_{H^1(\Omega)}^2 + \|\tilde{e}_h^{n+1}\|_{H^1(\Omega)}^2). \label{equ: intermediate equation}
\end{align}
Summing equation \eqref{equ: intermediate equation} from $0$ to $n+1$, we obtain
\begin{align}
    \|\tilde{e}_h^{n+1}\|_{L^2(\Omega)}^2 + \tau \sum_{k=1}^{n+1} \|\nabla \tilde{e}_h^{k}\|_{L^2(\Omega)}^2 \le C_{\eps} (\tau^2 + h^4) + C_{\eps} \tau \sum_{k=1}^{n+1} \|\tilde{e}_h^{k}\|_{L^2(\Omega)}^2 + C \eps \tau \sum_{k=1}^{n+1} \|\nabla \tilde{e}_h^{k}\|_{L^2(\Omega)}^2. \label{equ: Gronwall}
\end{align}
Choosing $\eps$ sufficiently small such that $C \eps \le \frac12$ and applying the discrete Gronwall inequality to \eqref{equ: Gronwall}, we arrive at
\begin{equation*}
	\max_{0 \le k \le n+1} \|\tilde{e}_h^k\|_{L^2(\Omega)}^2 + \tau \sum_{k = 0}^{n+1} \|\nabla \tilde{e}_h^k\|_{L^2(\Omega)}^2 \le C (\tau + h^2)^2.
\end{equation*}
Then under an ``inverse" CFL condition $\tau \ge \kappa h^4$ with $\tau$ and $h$ sufficiently small:
\begin{align*}
	& \|\tilde{e}_h^{n+1}\|_{L^2(\Omega)} \le C(\tau + h^2) \le \sqrt{\tau}, \\
	& \|\nabla \tilde{e}_h^{n+1}\|_{L^2(\Omega)} \le C (\sqrt{\tau} + \frac{h^2}{\sqrt{\tau}}) \le 1.
\end{align*}
This completes the proof of \eqref{equ: 1 in induction} and \eqref{equ: 2 in induction} for $k = n+1$. 
If we are able to prove
\begin{equation*}\tag{$*$}
	\|\Delta_h \tilde{e}_h^{n+1}\|_{L^2(\Omega)} \le C,
\end{equation*}
we then obtain from \eqref{equ: discrete interpolation inequality} that
\begin{align*}
	\|\tilde{e}_h^{n+1}\|_{L^{\infty}(\Omega)} & \le \|\tilde{e}_h^{n+1}\|_{L^2(\Omega)}^{1/4}\|\Delta_h \tilde{e}_h^{n+1}\|_{L^2(\Omega)}^{3/4} \le C (\tau + h^2)^{1/4},
\end{align*}
which proves \eqref{equ: 3 in induction} for $k = n+1$, and the proof of mathematical induction is complete.

To prove ($*$), we divide into two cases:
\begin{equation*}
	\tau \le h^{2} \quad \text{and} \quad \tau \ge h^{2}.
\end{equation*}
If $\tau \le h^{2}$, by the inverse inequality, we can get 
\begin{equation*}
	\|\Delta \tilde{e}_h^{n+1}\|_{L^2(\Omega)} \le C h^{-2} \|\tilde{e}_h^{n+1}\|_{L^2(\Omega)} \le C h^{-2} (\tau + h^2) \le C.
\end{equation*}
If $\tau \ge h^{2}$, we rewrite the error equation \eqref{equ: error equation} as 
\begin{align*}
	 (\Delta_h \tilde{e}_h^{n+1}, v_h)_{L^2(\Omega)} & = \left( \frac{\tilde{e}_h^{n+1} - e_h^n}{\tau}, v_h \right)_{L^2(\Omega)}   + (V e_h^n, v_h)_{L^2(\Omega)} \\
	 & \quad + (\beta (\widehat{R_h\phi(t_n)})^3 - \beta (\phi_h^n)^3, v_h)_{L^2(\Omega)} \\
	& \quad  - \mu[\widehat{R_h\phi(t_n)}] (\widehat{R_h\phi(t_n)},v_h)_{L^2(\Omega)} + \mu[\phi_h^n] (\phi_h^n,v_h)_{L^2(\Omega)}   - \mathcal{E}(v_h) \\
	& =: \mathcal{G}_1 + \mathcal{G}_2 + \mathcal{G}_3 + \mathcal{G}_4 + \mathcal{G}_5 + \mathcal{G}_6.
\end{align*}
We estimate $\mathcal{G}_1$ as follows 
\begin{align*}
	|\mathcal{G}_1|& \le \left\| \frac{\tilde{e}_h^{n+1} - e_h^n}{\tau} \right\|_{L^2(\Omega)} \|v_h\|_{L^2(\Omega)} \le \tau^{-1} (\|\tilde{e}_h^{n+1}\|_{L^2(\Omega)} + \|e_h^{n}\|_{L^2(\Omega)})\|v_h\|_{L^2(\Omega)} \\
	& \le C \tau^{-1} (\tau + h^2) \|v_h\|_{L^2(\Omega)} \le C \|v_h\|_{L^2(\Omega)}.
\end{align*}
Obviously, 
\begin{align*}
	|\mathcal{G}_2| \le C \|V\|_{L^2(\Omega)}\|e_h^n\|_{L^{\infty}(\Omega)}\|v_h\|_{L^2(\Omega)} \le C \|v_h\|_{L^2(\Omega)} 
\end{align*}
and
\begin{equation*}
	|\mathcal{G}_j| \le C \|v_h\|_{L^2(\Omega)}, \qquad j = 3,\cdots,5.
\end{equation*}
With slightly modified the estimates of the truncation error, we have 
\begin{equation*}
	|\mathcal{G}_6| \le C \|v_h\|_{L^2(\Omega)}.
\end{equation*}
Estimates of $\mathcal{G}_1$--$\mathcal{G}_6$ imply 
\begin{align*}
	\|\Delta_h \tilde{e}_h^{n+1}\|_{L^2(\Omega)} \le C.
\end{align*}
The proof is completed by combing estimates of two cases.

\subsubsection{Error estimate}

From the proof of Lemma \ref{lem: induction}, we have the following convergence results.
\begin{theorem}\label{thm: L2 convergence}
	Under the same assumptions in Lemma \ref{lem: induction}, there holds
	\begin{equation*}
		\max_{0 \le n \le N} \|\phi(t_n) - \phi_h^n\|_{L^2(\Omega)} \le C_{\kappa}(\tau + h^2).
	\end{equation*}
\end{theorem}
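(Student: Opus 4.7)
The plan is to bound $\|\phi(t_n) - \phi_h^n\|_{L^2(\Omega)}$ by inserting the normalized Ritz projection $\widehat{R_h\phi(t_n)}$ as an intermediate quantity and applying the triangle inequality:
\begin{equation*}
    \|\phi(t_n) - \phi_h^n\|_{L^2(\Omega)} \le \|\phi(t_n) - \widehat{R_h\phi(t_n)}\|_{L^2(\Omega)} + \|e_h^n\|_{L^2(\Omega)}.
\end{equation*}
The first piece is a pure approximation error, which I expect to be $O(h^2)$ by standard finite element theory, and the second piece is the scheme error at the grid point, which should inherit the $O(\tau + h^2)$ bound already obtained inside the proof of Lemma \ref{lem: induction}.

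For the first piece, I would use the decomposition
\begin{equation*}
    \widehat{R_h\phi(t_n)} - \phi(t_n) = \frac{R_h\phi(t_n) - \phi(t_n)}{\|R_h\phi(t_n)\|_{L^2(\Omega)}} + \phi(t_n)\left(\frac{1}{\|R_h\phi(t_n)\|_{L^2(\Omega)}} - 1\right),
\end{equation*}
together with the Aubin--Nitsche $O(h^2)$ estimate on $\|R_h\phi(t_n) - \phi(t_n)\|_{L^2(\Omega)}$ and the elementary bound $\bigl|\|R_h\phi(t_n)\|_{L^2(\Omega)} - 1\bigr| \le \|R_h\phi(t_n) - \phi(t_n)\|_{L^2(\Omega)}$, which together give $\|\phi(t_n) - \widehat{R_h\phi(t_n)}\|_{L^2(\Omega)} \le Ch^2$ uniformly in $n$ thanks to the assumed regularity $\phi \in C([0,T]; H^2(\Omega))$.

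For the second piece, all the hard work has already been done in the induction: the argument leading to Lemma \ref{lem: induction} actually established the stronger bound $\max_{0 \le k \le N}\|\tilde{e}_h^k\|_{L^2(\Omega)} \le C(\tau + h^2)$ under the ``inverse'' CFL condition $\tau \ge \kappa h^4$. Feeding this into the geometric inequality \eqref{equ: key lemma in geometry 1} of Lemma \ref{lem: key lemma in geometry} immediately yields $\|e_h^n\|_{L^2(\Omega)} \le C\|\tilde{e}_h^n\|_{L^2(\Omega)} \le C(\tau + h^2)$.

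Assembling the two estimates gives $\|\phi(t_n) - \phi_h^n\|_{L^2(\Omega)} \le C_{\kappa}(\tau + h^2)$ as claimed. There is no genuine obstacle left at this stage; the whole difficulty of the result has been absorbed into Lemma \ref{lem: induction}, whose bootstrap argument depended crucially on the non-trivial normalization estimate of Lemma \ref{lem: key lemma in geometry}. The only point I would double-check while writing the proof is that the constant in the Ritz piece and the constant produced by Lemma \ref{lem: key lemma in geometry} remain uniform in $n \in \{0,\dots,N\}$ under the stated regularity, so that the final constant $C_\kappa$ really is independent of the grid.
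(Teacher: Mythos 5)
Your proposal is correct and follows essentially the same route as the paper: the triangle inequality through $\widehat{R_h\phi(t_n)}$, the $O(h^2)$ Ritz-projection estimate for the first piece, and the bound $\|e_h^n\|_{L^2(\Omega)} \le C\|\tilde{e}_h^n\|_{L^2(\Omega)} \le C(\tau+h^2)$ obtained by combining Lemma \ref{lem: key lemma in geometry} with the discrete Gronwall estimate already established inside the proof of Lemma \ref{lem: induction}. Your extra detail on the normalized-projection error (which the paper simply asserts is $Ch^2$) is a harmless elaboration of the same argument.
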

\begin{proof}
	We obtain from Lemma \ref{lem: key lemma in geometry} that
	\begin{equation*}
		\|e_h^n\|_{L^2(\Omega)} \le C \|\tilde{e}_h^n\|_{L^2(\Omega)} \le C(\tau + h^2),
	\end{equation*}
    which leads to
	\begin{align*}
		\|\phi(t_n) - \phi_h^n\|_{L^2(\Omega)} & \le \|\phi(t_n) - \widehat{R_h\phi(t_n)}\|_{L^2(\Omega)} +  \|e_h^n\|_{L^2(\Omega)} \\
		& \le C h^2 + C(\tau + h^2) \le C(\tau + h^2),
	\end{align*}
	and completes the proof.
\end{proof}
\begin{theorem}[discrete $H^2$ stability]
	Under the same assumptions in Lemma \ref{lem: induction}, there holds
	\begin{equation*}
		\max_{0 \le n \le N}\|\Delta_h \phi_h^n\|_{L^2(\Omega)} \le C_{\kappa}.
	\end{equation*}
\end{theorem}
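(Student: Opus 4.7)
The plan is to decompose
\[
\phi_h^n \;=\; \widehat{R_h\phi(t_n)} \;-\; e_h^n
\]
and bound the discrete Laplacian of each piece. For the first piece, I would use the standard identity $\Delta_h R_h u = P_h\Delta u$ (valid for $u\in H^2(\Omega)\cap H_0^1(\Omega)$, where $P_h:L^2(\Omega)\to S_h$ is the $L^2$-projection), which is immediate from the definitions of $R_h$ and $\Delta_h$ and an integration by parts. Since $P_h$ is $L^2$-stable, this gives $\|\Delta_h R_h\phi(t_n)\|_{L^2(\Omega)}\le \|\Delta\phi(t_n)\|_{L^2(\Omega)} \le C$ uniformly in $n$ by Assumption on regularity of $\phi$. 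Dividing by $\|R_h\phi(t_n)\|_{L^2(\Omega)}$, which converges to $1$ as $h\to 0$ and so is bounded below by a positive constant, yields $\|\Delta_h\widehat{R_h\phi(t_n)}\|_{L^2(\Omega)}\le C$.

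For the second piece I would exploit the algebraic identity coming from the normalization step. Writing $\tilde{\phi}_h^n = \widehat{R_h\phi(t_n)} - \tilde{e}_h^n$ and then $\phi_h^n = \tilde{\phi}_h^n/\|\tilde{\phi}_h^n\|_{L^2(\Omega)}$ one obtains
\[
e_h^n \;=\; \left(1 - \frac{1}{\|\tilde{\phi}_h^n\|_{L^2(\Omega)}}\right)\widehat{R_h\phi(t_n)} \;+\; \frac{1}{\|\tilde{\phi}_h^n\|_{L^2(\Omega)}}\,\tilde{e}_h^n.
\]
Applying $\Delta_h$ to this relation and taking $L^2$ norms, the key observation is that the two scalar coefficients are uniformly controlled: from \eqref{equ: 1 in induction} we have $\bigl|\|\tilde{\phi}_h^n\|_{L^2(\Omega)} - 1\bigr| \le \|\tilde{e}_h^n\|_{L^2(\Omega)} \le \sqrt{\tau}$, so both coefficients are $O(1)$, and the first is in fact $O(\sqrt{\tau})$. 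Combined with Step~1 and with the bound $\|\Delta_h\tilde{e}_h^n\|_{L^2(\Omega)}\le C$ (which was established as $(*)$ inside the proof of Lemma~\ref{lem: induction}, and propagates to all $n$ together with the induction), this gives $\|\Delta_h e_h^n\|_{L^2(\Omega)}\le C$.

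Putting the two bounds together produces $\|\Delta_h\phi_h^n\|_{L^2(\Omega)}\le C_\kappa$ for $1\le n\le N$; the base case $n=0$ is handled directly from $\phi_h^0 = R_h\phi_0/\|R_h\phi_0\|_{L^2(\Omega)}$ and the $\Delta_h R_h = P_h\Delta$ identity together with $\phi_0\in H^2(\Omega)\cap H_0^1(\Omega)$. The main technical point is the second step: one must be sure that no term of size $h^{-2}$ sneaks in when differentiating the normalization factor. The identity above is what makes this transparent, since both resulting scalar coefficients depend only on $\|\tilde{\phi}_h^n\|_{L^2(\Omega)}$, which is close to $1$, and not on any quantity that would blow up under the inverse CFL condition $\tau\ge\kappa h^4$.
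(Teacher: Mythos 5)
Your proposal is correct and rests on exactly the same three ingredients as the paper's proof: the bound $\|\Delta_h\widehat{R_h\phi(t_n)}\|_{L^2(\Omega)}\le C$, the bound $\|\Delta_h\tilde e_h^n\|_{L^2(\Omega)}\le C$ from $(*)$, and the fact that $\|\tilde\phi_h^n\|_{L^2(\Omega)}$ stays near $1$; the paper just organizes them more directly by writing $\phi_h^n=\widehat{\tilde\phi_h^n}$, bounding $\|\Delta_h\phi_h^n\|_{L^2(\Omega)}\le C\|\Delta_h\tilde\phi_h^n\|_{L^2(\Omega)}$, and splitting $\tilde\phi_h^n=\widehat{R_h\phi(t_n)}-\tilde e_h^n$, whereas you reach the same point through the normalization identity for $e_h^n$. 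Your explicit justification of the first ingredient via $\Delta_h R_h=P_h\Delta$ and $L^2$-stability of $P_h$ is a detail the paper leaves implicit, and it is correct.
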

\begin{proof}
	Note that 
	\begin{equation*}
		\phi_h^n = \widehat{\tilde{\phi}_h^n},
	\end{equation*}
	we obtain from ($*$) that
	\begin{align*}
		\|\Delta_h \phi_h^n\|_{L^2(\Omega)} &\le C \|\Delta_h \tilde{\phi}_h^n\|_{L^2(\Omega)} \\
		& \le C \|\Delta_h \widehat{R_h\phi(t_n)}\|_{L^2(\Omega)} + C \|\Delta_h \tilde{e}_h^n\|_{L^2(\Omega)} \le C,
	\end{align*}
	which completes the proof.
\end{proof}
\begin{corollary}\label{thm: H1 convergence}
	Under the same assumptions in Lemma \ref{lem: induction}, there holds
	\begin{equation*}
		\max_{0 \le n \le N} \|\phi(t_n) - \phi_h^n\|_{H^1(\Omega)} \le C_{\kappa} (\sqrt{\tau} + h).
	\end{equation*}
\end{corollary}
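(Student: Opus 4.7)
The plan is to reduce the $H^1$ estimate to the already established $L^2$ estimate and the discrete $H^2$ stability. First I would split
\begin{equation*}
\|\phi(t_n) - \phi_h^n\|_{H^1(\Omega)} \le \|\phi(t_n) - \widehat{R_h\phi(t_n)}\|_{H^1(\Omega)} + \|e_h^n\|_{H^1(\Omega)},
\end{equation*}
where $e_h^n = \widehat{R_h\phi(t_n)} - \phi_h^n$. The projection piece is controlled by the standard $H^1$ Ritz estimate $\|\phi - R_h\phi\|_{H^1(\Omega)} \le Ch\|\phi\|_{H^2(\Omega)}$ plus an $O(h^2)$ correction arising from $|1/\|R_h\phi(t_n)\|_{L^2(\Omega)} - 1|$, and therefore contributes $O(h)$.

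For the discrete piece $e_h^n \in S_h$, the key identity is
\begin{equation*}
\|\nabla e_h^n\|_{L^2(\Omega)}^2 = -(\Delta_h e_h^n, e_h^n)_{L^2(\Omega)} \le \|\Delta_h e_h^n\|_{L^2(\Omega)}\,\|e_h^n\|_{L^2(\Omega)}.
\end{equation*}
Theorem \ref{thm: L2 convergence} gives $\|e_h^n\|_{L^2(\Omega)} \le C(\tau + h^2)$, so everything reduces to a uniform bound on $\|\Delta_h e_h^n\|_{L^2(\Omega)}$. By the triangle inequality it suffices to bound $\|\Delta_h\phi_h^n\|_{L^2(\Omega)}$ and $\|\Delta_h\widehat{R_h\phi(t_n)}\|_{L^2(\Omega)}$ separately. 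The first is exactly the discrete $H^2$ stability theorem just proved. For the second, I would compute, for any $v_h \in S_h$,
\begin{equation*}
(\Delta_h \widehat{R_h\phi(t_n)}, v_h)_{L^2(\Omega)} = -\frac{(\nabla R_h\phi(t_n), \nabla v_h)_{L^2(\Omega)}}{\|R_h\phi(t_n)\|_{L^2(\Omega)}} = \frac{(\Delta\phi(t_n), v_h)_{L^2(\Omega)}}{\|R_h\phi(t_n)\|_{L^2(\Omega)}},
\end{equation*}
using the definition of $\Delta_h$, the Ritz projection identity, and integration by parts against $\phi(t_n) \in H^2(\Omega) \cap H_0^1(\Omega)$; this yields $\|\Delta_h \widehat{R_h\phi(t_n)}\|_{L^2(\Omega)} \le C\|\phi(t_n)\|_{H^2(\Omega)} \le C$.

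Combining these gives $\|\nabla e_h^n\|_{L^2(\Omega)}^2 \le C(\tau + h^2)$, hence $\|\nabla e_h^n\|_{L^2(\Omega)} \le C(\sqrt{\tau} + h)$; together with the $L^2$ bound this yields $\|e_h^n\|_{H^1(\Omega)} \le C(\sqrt{\tau} + h)$ and the claim follows. I do not expect a real obstacle here: the suboptimal $\sqrt{\tau}$ rate is intrinsic to converting an $O(\tau)$ $L^2$ bound into an $H^1$ bound through the interpolation $\|\nabla u_h\|_{L^2(\Omega)}^2 \le \|u_h\|_{L^2(\Omega)}\|\Delta_h u_h\|_{L^2(\Omega)}$, and upgrading to an optimal $O(\tau)$ $H^1$ rate would require a separate energy argument that tracks $\|\nabla \tilde{e}_h^n\|_{L^2(\Omega)}$ directly along the iteration, which lies beyond the scope of this corollary.
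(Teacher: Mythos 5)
Your proposal is correct and follows essentially the same route as the paper: the triangle inequality splitting off the Ritz-projection part, the discrete interpolation inequality $\|\nabla e_h^n\|_{L^2(\Omega)}\le\|e_h^n\|_{L^2(\Omega)}^{1/2}\|\Delta_h e_h^n\|_{L^2(\Omega)}^{1/2}$, the $L^2$ error bound from Theorem \ref{thm: L2 convergence}, and the discrete $H^2$ stability to control $\|\Delta_h e_h^n\|_{L^2(\Omega)}$. Your explicit verification that $\|\Delta_h\widehat{R_h\phi(t_n)}\|_{L^2(\Omega)}\le C$ via the Ritz identity is a detail the paper leaves implicit, but the argument is the same.
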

\begin{proof}
	With inequality \eqref{equ: discrete interpolation}, we obtain
	\begin{equation*}
		\|\nabla e_h^n\|_{L^2(\Omega)} \le \|e_h^n\|_{L^2(\Omega)}^{\frac12}\|\Delta_h e_h^n\|_{L^2(\Omega)}^{\frac12} \le C (\sqrt{\tau} + h),
	\end{equation*}
	which leads to 
	\begin{align*}
		\|\phi(t_n) - \phi_h^n\|_{H^1(\Omega)} & \le  \| \phi(t_n) - \widehat{R_h\phi(t_n)}\|_{H^1(\Omega)} + C \|\nabla e_h^n\|_{L^2(\Omega)} \\
		& \le C (\sqrt{\tau} + h).
	\end{align*}
	This completes the proof.
\end{proof}

\section{Numerical experiments}\label{sec: numerical experiments}

In this section, we will provide two numerical experiments to verify our convergence results of numerical scheme \eqref{equ: numerical scheme 1}--\eqref{equ: numerical scheme 3} corresponding to the gradient flow model, i.e., Theorem \ref{thm: L2 convergence} and Corollary \ref{thm: H1 convergence}. All computations are performed on $\Omega  = (0,1)^3$ up to time $T = 1$. The numerical error is quantified in the following norms: 
\begin{align*}
    e_{L^2} = \|\phi_h^N - \phi_{\text{ref}}\|_{L^2(\Omega)} \qquad e_{H^1} = \|\phi_h^N - \phi_{\text{ref}}\|_{H^1(\Omega)},
\end{align*}
where $\phi_{\text{ref}}$ denotes a reference solution computed by using the same numerical scheme as for $\phi_h^N$ with a finer time step or spatial mesh size. The corresponding code is designed based on the toolbox PHG (Parallel Hierarchical Grid) \cite{PHG} and the computations are carried out on LSSC--IV cluster in the State Key Laboratory of Mathematical Sciences of the Chinese Academy of
Sciences.

\subsection{Example I. Harmonic potential}

We consider the system \eqref{equ: PDEofL2GF} with $\beta =10$, the initial function $\phi_0$ and potential function $V$ are as follows:
\begin{equation}
   \begin{aligned}
       \phi_0 =cx(1-x)y(1-y)z(1-z), \qquad
        V=\frac{1}{2}(x^2+y^2+z^2),
   \end{aligned}
\end{equation}
where $c>0$ is a constant such that $\Vert\phi_0\Vert_{L^2(\Omega)}=1$.

Tables \ref{tab1} and \ref{tab2} show the temporal discretization errors $e_{L^2}$ and $e_{H^1}$ and the temporal convergence orders, respectively. In each case, the reference solution $\phi_{\text{ref}}$ is computed using the scheme \eqref{equ: numerical scheme 1}--\eqref{equ: numerical scheme 3} with the same spatial mesh as that used in computing $\phi_h^N$ and a smaller time step $\tau = \frac{1}{2000}$. 

Tables \ref{tab3} and \ref{tab4} show the spatial discretization errors and convergence orders of $e_{L^2}$ and $e_{H^1}$, respectively. In each case, the reference solution $\phi_{\text{ref}}$ is computed using the scheme \eqref{equ: numerical scheme 1}--\eqref{equ: numerical scheme 3} with the same temporal mesh as that used in computing $\phi_h^N$ and a smaller spatial mesh $h = 0.027063$. 

From Tables \ref{tab1}--\ref{tab4}, we can see that the scheme exhibits first-order temporal convergence under the $L^2$ and $H^1$ norms, first-order spatial convergence under the $H^1$ norm, and second-order spatial convergence under the $L^2$ norm.

\begin{table}[H]\label{tab1}
    \setlength{\tabcolsep}{30pt}
    \renewcommand{\arraystretch}{0.95}
    \centering
    \caption{\parbox{0.9\textwidth}{
    \begin{center}
        $L^2$ temporal discretization errors of scheme \eqref{equ: numerical scheme 1}--\eqref{equ: numerical scheme 3} with reference time step $\tau=\frac{1}{2000}$.
        \end{center}}}
    \begin{tabular}{cccc}
        \hline
        \rule{0pt}{2ex}
        $h$ & $\tau$ & $e_{L^2}$ & Order \\ 
        \hline
        \rule{0pt}{3ex}
         & 1/90 & 2.0951E-02 & - \\ 
        0.216506  & 1/180 & 1.0940E-02 & 0.9374 \\ 
          & 1/360 & 5.1690E-03 & 1.0817 \\ 
        & ~ & ~ & ~\\
         & 1/90 & 2.0952E-02 & - \\ 
         0.108253 & 1/180 & 1.0972E-02 & 0.9333\\ 
          & 1/360 & 5.1920E-03 & 1.0795 \\ 
        \hline
    \end{tabular}
\end{table}

\begin{table}[H]\label{tab2}
    \setlength{\tabcolsep}{30pt}
    \renewcommand{\arraystretch}{0.95}
    \centering
    \caption{\parbox{0.9\textwidth}{
    \begin{center}
    $H^1$ temporal discretization errors of scheme \eqref{equ: numerical scheme 1}--\eqref{equ: numerical scheme 3} with reference time step $\tau=\frac{1}{2000}$.
    \end{center}}}
    \begin{tabular}{cccc}
        \hline
        \rule{0pt}{2ex}
        $h$ & $\tau$ & $e_{H^1}$ & Order \\ 
        \hline
        \rule{0pt}{3ex}
         & 1/90 & 2.4147E-01 & - \\ 
        0.216506  & 1/180 & 1.2583E-01 & 0.9404 \\ 
          & 1/360 & 5.9384E-02 & 1.0833 \\ 
        & ~ & ~ & ~\\
         & 1/90 & 2.3044E-01 & - \\ 
         0.108253 & 1/180 & 1.2037E-01 & 0.9369\\ 
          & 1/360 & 5.6883E-02 & 1.0814 \\ 
        \hline
    \end{tabular}
\end{table}

\begin{table}[H]\label{tab3}
    \setlength{\tabcolsep}{30pt}
    \renewcommand{\arraystretch}{0.95}
    \centering
    \caption{\parbox{0.9\textwidth}{
    \begin{center}
    $L^2$ spatial discretization errors of scheme \eqref{equ: numerical scheme 1}--\eqref{equ: numerical scheme 3} with reference spatial mesh size $h=0.027063$.
    \end{center}}}
    \begin{tabular}{cccc}
        \hline
        \rule{0pt}{2ex}
        $\tau$ & $h$ & $e_{L^2}$ & Order \\ 
        \hline
        \rule{0pt}{3ex}
         & 0.433013 & 1.4300E-01 & - \\ 
        $\frac{1}{1000}$  & 0.216506 & 3.7730E-02  & 1.9222 \\ 
          & 0.108253 & 9.1250E-03 & 2.0478 \\ 
        & ~ & ~ & ~\\
         & 0.433013 & 1.4298E-01 & - \\ 
        $\frac{1}{2000}$  & 0.216506 & 3.7687E-02  & 1.9237 \\ 
          & 0.108253 & 9.1241E-03 & 2.0463 \\ 
        \hline
    \end{tabular}
\end{table}

\begin{table}[H]\label{tab4}
    \setlength{\tabcolsep}{30pt}
    \renewcommand{\arraystretch}{0.95}
    \centering
    \caption{\parbox{0.9\textwidth}{
    \begin{center}
    $H^1$ spatial discretization errors of scheme \eqref{equ: numerical scheme 1}--\eqref{equ: numerical scheme 3} with reference spatial mesh size $h=0.027063$ and corresponding orders.
    \end{center}}}
    \begin{tabular}{cccc}
        \hline
        \rule{0pt}{2ex}
        $\tau$ & $h$ & $e_{H^1}$ & Order \\ 
        \hline
        \rule{0pt}{3ex}
         & 0.433013 & 2.9615 & - \\ 
        $\frac{1}{1000}$  & 0.216506 & 1.4438  & 1.0365 \\ 
          & 0.108253 & 0.6971 & 1.0504 \\ 
        & ~ & ~ & ~\\
         & 0.433013 & 2.9599 & - \\ 
        $\frac{1}{2000}$  & 0.216506 & 1.4419  & 1.0376 \\ 
          & 0.108253 & 0.6962 & 1.0504 \\ 
        \hline
    \end{tabular}
\end{table}

\subsection{Example II. Lattice potential} 

We consider the system \eqref{equ: PDEofL2GF} with $\beta=10$, the initial function $\phi_0$ and potential function $V$ are as follows:
\begin{align*}
    &\phi_0 =cx(1-x)y(1-y)z(1-z),\\
    &V =\frac{(x^2+y^2+z^2)}{2}+20+20\sin(2\pi x)\sin(2\pi y)\sin(2\pi z),
\end{align*}
where $c>0$ is a constant such that $\Vert\phi_0\Vert_{L^2(\Omega)}=1$.

We present the temporal discretization errors and spatial discretization errors of the scheme \eqref{equ: numerical scheme 1}--\eqref{equ: numerical scheme 3} in Tables \ref{tab5}--\ref{tab8}. 

The temporal convergence orders of $e_{L^2}$ and $e_{H^1}$ are shown in Tables \ref{tab5} and \ref{tab6}. The reference solution $\phi_{\text{ref}}$ is computed by the scheme \eqref{equ: numerical scheme 1}--\eqref{equ: numerical scheme 3} with the same spatial mesh as in the computation of $\phi_h^N$ and a smaller time step $\tau = \frac{1}{2000}$. Tables \ref{tab5} and \ref{tab6} show spatial convergence orders of $e_{L^2}$ and $e_{H^1}$ while the reference solution $\phi_{\text{ref}}$ is derived with the same temporal mesh in computing $\phi_h^N$ and a smaller spatial mesh $h = 0.017469$.

Similar to Example I, Tables \ref{tab5}--\ref{tab8} show that the convergence order of $e_{L^2}$ is $O(\tau + h^2)$, which validates the conclusion in Theorem \ref{thm: L2 convergence}. The convergence order of $e_{H^1}$ is $O(\tau + h)$, showing that the result in Corollary \ref{thm: H1 convergence} is sub-optimal.

\begin{table}[H]\label{tab5}
    \setlength{\tabcolsep}{30pt}
    \renewcommand{\arraystretch}{0.95}
    \centering
    \caption{\parbox{0.9\textwidth}{
    \begin{center}
    $L^2$ temporal discretization errors of scheme \eqref{equ: numerical scheme 1}--\eqref{equ: numerical scheme 3} with reference time step $\tau=\frac{1}{2000}$.
    \end{center}}}
    \begin{tabular}{cccc}
        \hline
        \rule{0pt}{2ex}
        $h$ & $\tau$ & $e_{L^2}$ & Order \\ 
        \hline
        \rule{0pt}{3ex}
         & 1/70 & 5.9623E-02 & - \\ 
        0.216506  & 1/140 & 3.0171E-02 & 0.9827 \\ 
          & 1/280 & 1.4200E-02 & 1.0873 \\ 
        & ~ & ~ & ~\\
         & 1/70 & 6.2760E-02 & - \\ 
         0.108253 & 1/140 & 3.2162E-02 & 0.9645\\ 
          & 1/280 & 1.5221E-03 & 1.0793 \\ 
        \hline
    \end{tabular}
\end{table}

\begin{table}[H]\label{tab6}
    \setlength{\tabcolsep}{30pt}
    \renewcommand{\arraystretch}{0.95}
    \centering
    \caption{\parbox{0.9\textwidth}{
    \begin{center}
    $H^1$ temporal discretization errors of scheme \eqref{equ: numerical scheme 1}--\eqref{equ: numerical scheme 3} with reference time step $\tau=\frac{1}{2000}$.
    \end{center}}}
    \begin{tabular}{cccc}
        \hline
        \rule{0pt}{2ex}
        $h$ & $\tau$ & $e_{H^1}$ & Order \\ 
        \hline
        \rule{0pt}{3ex}
         & 1/70 & 6.9833E-01 & - \\ 
        0.216506  & 1/140 & 3.5162E-01 & 0.9899 \\ 
          & 1/280 & 1.6507E-01 & 1.0909 \\ 
        & ~ & ~ & ~\\
         & 1/70 & 6.9649E-01 & - \\ 
         0.108253 & 1/140 & 3.5514E-01 & 0.9717\\ 
          & 1/280 & 1.6765E-01 & 1.0829 \\ 
        \hline
    \end{tabular}
\end{table}

\begin{table}[H]\label{tab7}
    \setlength{\tabcolsep}{30pt}
    \renewcommand{\arraystretch}{0.95}
    \centering
    \caption{\parbox{0.9\textwidth}{
    \begin{center}
    $L^2$ spatial discretization errors of scheme \eqref{equ: numerical scheme 1}--\eqref{equ: numerical scheme 3} with reference spatial mesh size $h = 0.017469$.
    \end{center}}}
    \begin{tabular}{cccc}
        \hline
        \rule{0pt}{2ex}
        $\tau$ & $h$ & $e_{L^2}$ & Order \\ 
        \hline
        \rule{0pt}{3ex}
         & 0.279508 & 6.0835E-02  & - \\ 
        $\frac{1}{500}$  & 0.139754 & 1.4965E-02  & 2.0233 \\ 
          & 0.069877 & 3.6470E-03 & 2.0368 \\ 
        & ~ & ~ & ~\\
         & 0.279508 & 5.9719E-02 & - \\ 
        $\frac{1}{1000}$  & 0.139754 & 1.4734E-02  & 2.0190 \\ 
          & 0.069877 & 3.6120E-03 & 2.0283 \\ 
        \hline
    \end{tabular}
\end{table}

\begin{table}[H]\label{tab8}
    \setlength{\tabcolsep}{30pt}
    \renewcommand{\arraystretch}{0.95}
    \centering
    \caption{\parbox{0.9\textwidth}{
    \begin{center}
    $H^1$ spatial discretization errors of scheme \eqref{equ: numerical scheme 1}--\eqref{equ: numerical scheme 3} with reference spatial mesh size $h = 0.017469$.
    \end{center}}}
    \begin{tabular}{cccc}
        \hline
        \rule{0pt}{2ex}
        $\tau$ & $h$ & $e_{H^1}$ & Order \\ 
        \hline
        \rule{0pt}{3ex}
         & 0.279508 & 1.6535  & - \\ 
        $\frac{1}{500}$&0.139754  & 7.9726E-01  & 1.0524 \\ 
          & 0.069877 & 3.8740E-01 & 1.0412 \\ 
        & ~ & ~ & ~\\
         & 0.279508 & 1.6381 & - \\ 
        $\frac{1}{1000}$ &0.139754 & 7.9058E-01  & 1.0510 \\ 
          & 0.069877 & 3.8341E-01 & 1.0440 \\ 
        \hline
    \end{tabular}
\end{table}

\section{Conclusion}\label{sec: conclusion}

In this paper, we have carried out the mathematical analysis of the $L^2$ normalized gradient flow model for the GP eigenvalue problem and given a normalized implicit-explicit full-discrete scheme. We prove the well-posedness of the scheme and establish the optimal-order convergence of the approximations. Under a mild inverse CFL condition, we have proven the convergence rate is of order $O(\tau + h^2)$ in $L^{\infty}(0,T;L^2(\Omega))$-norm. With this convergence result, combining the $H^2$-stability, we have proved that the numerical scheme has a sub-optimal convergence of the form $O(\sqrt{\tau} + h)$ in $L^{\infty}(0,T;H^1(\Omega))$-norm. Numerical experiments validate our theories and show that the theoretical convergence order in $L^{\infty}(0,T;H^1(\Omega))$-norm is not sharp. The analysis of the optimal-order convergence in $L^{\infty}(0,T;H^1(\Omega))$-norm will be left as future work.

\appendix

\section{Properties of discrete operators}

\begin{lemma}[\cite{brenner2008mathematical}]
For the Ritz projection operator $R_h$ and $u \in H^2(\Omega) \cap H_0^1(\Omega)$, there hold:
\begin{align}
	 \|\nabla ( u - R_h u )\|_{L^2(\Omega)} &\le C h \|D^2u\|_{L^2(\Omega)}, \\
	 \|u - R_hu\|_{L^2(\Omega)} &\le Ch^2 \|D^2u\|_{L^2(\Omega)}. \label{equ: L2 convergence of Ritz projection}
\end{align}
\end{lemma}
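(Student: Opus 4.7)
The plan is to establish the two estimates in sequence, exploiting the Galerkin orthogonality satisfied by $R_h$ together with standard interpolation error estimates and an Aubin--Nitsche duality argument.

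First I would record the defining Galerkin orthogonality property
\begin{equation*}
(\nabla(u - R_h u), \nabla v_h)_{L^2(\Omega)} = 0 \qquad \forall\, v_h \in S_h,
\end{equation*}
which is immediate from the definition of $R_h$. Combined with the coercivity of the $H^1$-seminorm on $H_0^1(\Omega)$, this gives C\'ea's lemma: $\|\nabla(u - R_h u)\|_{L^2(\Omega)} \le \inf_{v_h \in S_h}\|\nabla(u - v_h)\|_{L^2(\Omega)}$. Choosing $v_h = I_h u$ to be the piecewise linear nodal Lagrange interpolant and invoking the classical Bramble--Hilbert-type interpolation estimate $\|\nabla(u - I_h u)\|_{L^2(\Omega)} \le C h \|D^2 u\|_{L^2(\Omega)}$ on the shape-regular, quasi-uniform mesh $\mathcal{T}_h$, I immediately obtain the first inequality.

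For the $L^2$ estimate I would use the Aubin--Nitsche duality trick. Let $w \in H_0^1(\Omega)$ be the weak solution of
\begin{equation*}
-\Delta w = u - R_h u \quad \text{in } \Omega, \qquad w = 0 \quad \text{on } \partial\Omega.
\end{equation*}
Since $\Omega$ is a bounded convex domain with smooth boundary, standard elliptic regularity yields $w \in H^2(\Omega)$ together with the a priori bound $\|D^2 w\|_{L^2(\Omega)} \le C \|u - R_h u\|_{L^2(\Omega)}$. Testing the PDE with $u - R_h u$ and integrating by parts gives
\begin{equation*}
\|u - R_h u\|_{L^2(\Omega)}^2 = (\nabla(u - R_h u), \nabla w)_{L^2(\Omega)}.
\end{equation*}
Subtracting any $v_h \in S_h$ via Galerkin orthogonality, then choosing $v_h = I_h w$ and applying Cauchy--Schwarz together with the interpolation estimate for $w$,
\begin{equation*}
\|u - R_h u\|_{L^2(\Omega)}^2 \le \|\nabla(u - R_h u)\|_{L^2(\Omega)} \cdot C h \|D^2 w\|_{L^2(\Omega)} \le C h^2 \|D^2 u\|_{L^2(\Omega)} \|u - R_h u\|_{L^2(\Omega)},
\end{equation*}
and dividing through delivers \eqref{equ: L2 convergence of Ritz projection}.

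The only real subtlety is justifying the $H^2$-regularity of the dual problem, which is what forces the assumption that $\Omega$ is a bounded convex domain with smooth boundary (already in force throughout the paper). Everything else is a routine combination of best-approximation, Galerkin orthogonality, and the standard nodal interpolation estimate, so no new ideas are needed beyond what I have sketched.
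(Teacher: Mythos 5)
Your proof is correct and is exactly the standard argument (C\'ea's lemma via Galerkin orthogonality for the $H^1$ estimate, Aubin--Nitsche duality on the convex domain for the $L^2$ estimate) that the paper itself does not reproduce but simply cites from \cite{brenner2008mathematical}. No discrepancies to report.
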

\begin{lemma}\label{lem: L infty boundedness of R_h}
If $u \in H^2(\Omega) \cap H_0^1(\Omega)$, then 
	\begin{equation}
		\|R_h u\|_{L^{\infty}(\Omega)} \le C \|u\|_{H^2(\Omega)}.
	\end{equation}
\end{lemma}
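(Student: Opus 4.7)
The plan is to combine Sobolev embedding in three dimensions with the standard $L^{2}$ error estimate for the Ritz projection and a finite element inverse inequality. The key observation is that since $\Omega \subset \R^{3}$, the embedding $H^{2}(\Omega) \hookrightarrow C(\bar\Omega)$ yields $\|u\|_{L^{\infty}(\Omega)} \le C \|u\|_{H^{2}(\Omega)}$, so it suffices to control $R_{h}u - u$ (or more conveniently, $R_{h}u$ minus an easy-to-estimate interpolant of $u$) in the $L^{\infty}$ norm.

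First I would introduce a standard nodal (or Scott--Zhang) interpolant $I_{h}: H^{2}(\Omega) \cap H_{0}^{1}(\Omega) \to S_{h}$, which satisfies the $L^{\infty}$ stability
\begin{equation*}
    \|I_{h}u\|_{L^{\infty}(\Omega)} \le C \|u\|_{L^{\infty}(\Omega)} \le C \|u\|_{H^{2}(\Omega)}
\end{equation*}
and the $L^{2}$ approximation property $\|u - I_{h}u\|_{L^{2}(\Omega)} \le C h^{2}\|u\|_{H^{2}(\Omega)}$. Writing $R_{h}u = (R_{h}u - I_{h}u) + I_{h}u$, it remains to bound the discrete function $R_{h}u - I_{h}u \in S_{h}$.

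For the discrete part I would invoke the standard inverse inequality on the quasi-uniform mesh $\mathcal{T}_{h}$ in three dimensions, namely $\|v_{h}\|_{L^{\infty}(\Omega)} \le C h^{-3/2}\|v_{h}\|_{L^{2}(\Omega)}$ for $v_{h} \in S_{h}$. Combined with the triangle inequality and the $L^{2}$ estimate \eqref{equ: L2 convergence of Ritz projection} for the Ritz projection, this gives
\begin{equation*}
    \|R_{h}u - I_{h}u\|_{L^{\infty}(\Omega)} \le C h^{-3/2} \bigl( \|R_{h}u - u\|_{L^{2}(\Omega)} + \|u - I_{h}u\|_{L^{2}(\Omega)} \bigr) \le C h^{1/2}\|u\|_{H^{2}(\Omega)}.
\end{equation*}
Collecting the two pieces yields $\|R_{h}u\|_{L^{\infty}(\Omega)} \le C(1 + h^{1/2})\|u\|_{H^{2}(\Omega)} \le C\|u\|_{H^{2}(\Omega)}$, which is the claimed bound.

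I do not expect a serious obstacle here: the only subtlety is that one must rely on the three-dimensional Sobolev embedding $H^{2} \hookrightarrow L^{\infty}$ (which just fails in higher dimensions), and one must use quasi-uniformity of $\mathcal{T}_{h}$ for the inverse estimate. An alternative route would be to quote a weighted or sharp $L^{\infty}$-stability result for the Ritz projection (of the form $\|R_{h}u\|_{L^{\infty}} \le C\|u\|_{L^{\infty}}$, possibly with a $|\log h|$ factor), but the interpolant-plus-inverse argument above is simpler and entirely self-contained within the tools already introduced in the paper.
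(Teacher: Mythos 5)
Your argument is correct and is essentially identical to the paper's proof: both decompose $R_hu$ via an interpolant $I_hu$, use the $L^\infty$ stability of the interpolant together with the Sobolev embedding $H^2(\Omega)\hookrightarrow L^\infty(\Omega)$ in three dimensions, and control the discrete remainder $R_hu - I_hu$ with the inverse inequality $\|v_h\|_{L^\infty(\Omega)}\le Ch^{-3/2}\|v_h\|_{L^2(\Omega)}$ combined with the $O(h^2)$ $L^2$-estimates for the Ritz projection and the interpolant. No gaps.
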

\begin{proof}
	Denote $I_hu \in S_h$ the interpolation of $u$,  we have 
	\begin{align*}
		\|R_h u\|_{L^{\infty}(\Omega)} & \le \|I_hu\|_{L^{\infty}(\Omega)} + \|I_hu - R_h u\|_{L^{\infty}(\Omega)} \\
		& \le \|u\|_{L^{\infty}(\Omega)} + C h^{-\frac32}\|I_hu - R_h u\|_{L^{2}(\Omega)} \quad (\text{inverse inequality}) \\
		& \le C \|u\|_{H^{2}(\Omega)} + C h^{-\frac32}\|I_hu -  u\|_{L^{2}(\Omega)} + C h^{-\frac32}\|u - R_h u\|_{L^{2}(\Omega)} \\
		& \le C \|u\|_{H^2(\Omega)},
	\end{align*}
	where we have used the Sobolev embedding inequality and the approximation property of the interpolation operator \cite{brenner2008mathematical} in the last line.
\end{proof}
\begin{lemma}\label{lem: discrete Laplacian operator}
	The discrete Laplacian operator satisfies 
	\begin{equation}\label{equ: discrete interpolation}
		\|\nabla u_h \|_{L^2(\Omega)} \le \|u_h\|_{L^2(\Omega)}^{\frac12} \|\Delta_h u_h\|_{L^2(\Omega)}^{\frac12} \qquad \forall \, u_h \in S_h,
	\end{equation}
    and
    \begin{equation}\label{equ: discrete interpolation inequality}
		\|u_h \|_{L^{\infty}(\Omega)} \le C\|u_h\|_{L^2(\Omega)}^{\frac14} \|\Delta_h u_h\|_{L^2(\Omega)}^{\frac34} \qquad \forall \, u_h \in S_h.
	\end{equation}
\end{lemma}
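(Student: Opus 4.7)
The plan is to handle the two inequalities separately. The first, \eqref{equ: discrete interpolation}, is essentially a Cauchy--Schwarz calculation: testing the defining relation of $\Delta_h$ with $v_h=u_h$ gives $\|\nabla u_h\|_{L^2(\Omega)}^2 = -(\Delta_h u_h, u_h)_{L^2(\Omega)} \le \|\Delta_h u_h\|_{L^2(\Omega)}\|u_h\|_{L^2(\Omega)}$, and the bound follows on taking square roots.

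For the discrete Gagliardo--Nirenberg-type estimate \eqref{equ: discrete interpolation inequality}, the strategy is to import the continuous three-dimensional Gagliardo--Nirenberg inequality
\[
\|u\|_{L^\infty(\Omega)} \le C \|u\|_{L^2(\Omega)}^{1/4} \|u\|_{H^2(\Omega)}^{3/4},
\]
valid since $H^2(\Omega)\hookrightarrow L^\infty(\Omega)$ in $\R^3$ (the exponents are the ones forced by scaling), through a continuous $H^2$-companion of $u_h$. Given $u_h \in S_h$, I would let $u \in H^2(\Omega) \cap H_0^1(\Omega)$ be the unique weak solution of $-\Delta u = -\Delta_h u_h$; elliptic regularity on the smooth convex domain $\Omega$ gives $\|u\|_{H^2(\Omega)} \le C\|\Delta_h u_h\|_{L^2(\Omega)}$. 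A short comparison of Galerkin orthogonalities shows $R_h u = u_h$, since $(\nabla R_h u, \nabla v_h)_{L^2(\Omega)} = -(\Delta_h u_h, v_h)_{L^2(\Omega)} = (\nabla u_h, \nabla v_h)_{L^2(\Omega)}$ for every $v_h \in S_h$.

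Armed with this identification, I would split $\|u_h\|_{L^\infty(\Omega)} \le \|u\|_{L^\infty(\Omega)} + \|u - R_h u\|_{L^\infty(\Omega)}$. The first term is treated by Gagliardo--Nirenberg together with $\|u\|_{L^2(\Omega)} \le \|u_h\|_{L^2(\Omega)} + \|u - R_h u\|_{L^2(\Omega)} \le \|u_h\|_{L^2(\Omega)} + Ch^2\|u\|_{H^2(\Omega)}$, coming from \eqref{equ: L2 convergence of Ritz projection}. The second is routed through the nodal interpolant $I_h u$: the estimate $\|u - I_h u\|_{L^\infty(\Omega)} \le Ch^{1/2}\|u\|_{H^2(\Omega)}$ follows from the Sobolev embedding $H^2(\Omega)\hookrightarrow C^{0,1/2}(\Omega)$ in three dimensions, while $\|I_h u - R_h u\|_{L^\infty(\Omega)}$ is controlled by the inverse inequality $Ch^{-3/2}\|I_h u - R_h u\|_{L^2(\Omega)}$ combined with the $L^2$ approximation bounds, producing an overall $\|u - R_h u\|_{L^\infty(\Omega)} \le Ch^{1/2}\|u\|_{H^2(\Omega)} \le Ch^{1/2}\|\Delta_h u_h\|_{L^2(\Omega)}$.

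The main obstacle is then to absorb this residual $h^{1/2}$ factor into the right-hand side of the target inequality. This is where I would invoke the finite element inverse inequality $\|\Delta_h u_h\|_{L^2(\Omega)} \le Ch^{-2}\|u_h\|_{L^2(\Omega)}$ on the quasi-uniform mesh, which yields
\[
h^{1/2}\|\Delta_h u_h\|_{L^2(\Omega)} = \bigl(h^2\|\Delta_h u_h\|_{L^2(\Omega)}\bigr)^{1/4}\|\Delta_h u_h\|_{L^2(\Omega)}^{3/4} \le C\|u_h\|_{L^2(\Omega)}^{1/4}\|\Delta_h u_h\|_{L^2(\Omega)}^{3/4}.
\]
The analogous $Ch^2\|u\|_{H^2(\Omega)}$ correction appearing inside $\|u\|_{L^2(\Omega)}$ in the Gagliardo--Nirenberg term is absorbed by the same trick at the weaker $1/4$-power. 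Concatenating these estimates produces \eqref{equ: discrete interpolation inequality}.
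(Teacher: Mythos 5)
Your proposal is correct and follows essentially the same route as the paper's proof: Cauchy--Schwarz for the first inequality, and for the second the same auxiliary $u\in H^2(\Omega)\cap H_0^1(\Omega)$ with $\Delta u=\Delta_h u_h$ (so $u_h=R_hu$, which you usefully make explicit), the continuous Gagliardo--Nirenberg bound $\|u\|_{L^\infty(\Omega)}\le C\|u\|_{L^2(\Omega)}^{1/4}\|u\|_{H^2(\Omega)}^{3/4}$, and the interpolant-plus-inverse-inequality comparison. The only cosmetic difference is in the final absorption: you collect a residual $Ch^{1/2}\|\Delta_h u_h\|_{L^2(\Omega)}$ and kill it with $h^2\|\Delta_h u_h\|_{L^2(\Omega)}\le C\|u_h\|_{L^2(\Omega)}$, whereas the paper splits $\|I_hu-u_h\|_{L^2(\Omega)}$ into $1/4$- and $3/4$-powers to achieve the identical cancellation.
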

\begin{proof}
By the definition of $\Delta_h$ and H\"older's inequality, we obtain
\begin{equation*}
 	\|\nabla u_h\|_{L^2(\Omega)}^2 = - (\Delta_h u_h, u_h)_{L^2(\Omega)} \le \|u_h\|_{L^2(\Omega)} \|\Delta_h u_h\|_{L^2(\Omega)},
 \end{equation*}
which completes the proof of \eqref{equ: discrete interpolation}.

To prove \eqref{equ: discrete interpolation inequality}, let $u \in H^2(\Omega) \cap H_0^1(\Omega)$ such that 
\begin{equation*}
	\Delta u = \Delta_h u_h.
\end{equation*}
Then we have $\|u\|_{H^2(\Omega)} \le C \|\Delta_h u_h \|_{L^2(\Omega)}$. Next we show that $\|u\|_{L^2(\Omega)} \le C \|u_h\|_{L^2(\Omega)}$.
	
Note that 
\begin{align}
	\|u\|_{L^2(\Omega)} & \le \|u_h\|_{L^2(\Omega)} + \|u - u_h\|_{L^2(\Omega)} \nn \\
	& \le  \|u_h\|_{L^2(\Omega)} + h^2 \|u\|_{H^2(\Omega)} \qquad (\text{here \eqref{equ: L2 convergence of Ritz projection} is used}) \nn \\
	& \le \|u_h\|_{L^2(\Omega)} + C h^2 \|\Delta_h u_h \|_{L^2(\Omega)} \nn \\
	& \le C \|u_h\|_{L^2(\Omega)}, \label{equ: 4.6}
\end{align}
where the inverse inequality is applied to obtain the last line. Similarly, we can prove that
\begin{align}\label{equ: 4.7}
\|I_hu\|_{L^2(\Omega)} \le C \|u_h\|_{L^2(\Omega)}.
\end{align}
	
Then by the triangle inequality, we get 
\begin{align*}
	\|u_h \|_{L^{\infty}(\Omega)} & \le \|u \|_{L^{\infty}(\Omega)} + \|u - u_h \|_{L^{\infty}(\Omega)} \\
	& \le \|u_h \|_{L^{\infty}(\Omega)} + \|u - I_hu \|_{L^{\infty}(\Omega)} + \|I_hu - u_h \|_{L^{\infty}(\Omega)} \\
	& \le C \|u\|_{L^{\infty}(\Omega)} + \|I_hu - u_h \|_{L^{\infty}(\Omega)} \\
	& \le C \|u\|_{L^2(\Omega)}^{\frac14}\|u\|_{H^2(\Omega)}^{\frac34} + h^{-\frac32} \|I_hu - u_h \|_{L^{2}(\Omega)},  
    \end{align*}
which together with Sobolev interpolation inequality, inverse inequality, equations \eqref{equ: 4.6} and \eqref{equ: 4.7} yields
    \begin{align*}
	\|u_h \|_{L^{\infty}(\Omega)}& \le C \|u_h\|_{L^2(\Omega)}^{\frac14}\|\Delta_h u\|_{L^2(\Omega)}^{\frac34} + h^{-\frac32} \|I_hu - u_h \|_{L^{2}(\Omega)}^{\frac14} \|I_hu - u_h \|_{L^{2}(\Omega)}^{\frac34} \\
	& \le C \|u_h\|_{L^2(\Omega)}^{\frac14}\|\Delta_h u\|_{L^2(\Omega)}^{\frac34} \\
	& \quad + h^{-\frac32} \|I_hu - u_h \|_{L^{2}(\Omega)}^{\frac14} \big( \|I_hu - u \|_{L^{2}(\Omega)} + \|u - u_h \|_{L^{2}(\Omega)}\big)^{\frac34}  \\
	& \le C \|u_h\|_{L^2(\Omega)}^{\frac14}\|\Delta_h u\|_{L^2(\Omega)}^{\frac34} + h^{-\frac32} \| u_h \|_{L^{2}(\Omega)}^{\frac14} \big( h^2 \|\Delta_h u_h \|_{L^2(\Omega)}\big)^{\frac34} \\
	& \le C \|u_h\|_{L^2(\Omega)}^{\frac14}\|\Delta_h u\|_{L^2(\Omega)}^{\frac34}.
\end{align*}
This completes the proof of \eqref{equ: discrete interpolation inequality}.
\end{proof}

\section{Detailed proofs}\label{sec: appendix}

\subsection{Proof of Lemma \ref{lem: technical lemma 1}}

\begin{proof}
	A direct calculation shows that 
	\begin{align*}
		\mu[u] - \mu[v] & = \int_{\Omega} \Big(  (|\nabla u|^2 - |\nabla v|^2) + V(u^2 - v^2) + \beta (u^4 - v^4) \Big) \d x \\
		& = \int_{\Omega} \Big(  (\nabla u + \nabla v)(\nabla u - \nabla v) + V (u+v)(u-v) \\
		& \quad\quad + \beta (u^2 + v^2)(u+v)(u-v) \Big) \d x.
	\end{align*}
	We then obtain from H\"older's inequality that 
	\begin{align*}
		|\mu[u] - \mu[v]| & \le M \|u-v\|_{H^1(\Omega)} + \|V\|_{L^{2}(\Omega)}\|u+v\|_{L^4(\Omega)}\|u-v\|_{L^4(\Omega)} \\
		& \quad + \beta \|u^2 + v^2\|_{L^3(\Omega)}\|u+v\|_{L^6(\Omega)}\|u-v\|_{L^2(\Omega)} \\
		& \le C_M \|u-v\|_{H^1(\Omega)},
	\end{align*} 
	where the Sobolev embedding $H^1(\Omega) \hookrightarrow L^6(\Omega)$ has been used.
\end{proof}

\subsection{Proof of Lemma \ref{lem: technical lemma 2}}

\begin{proof}
	Denote 
\begin{equation*}
	A:= \int_{\Omega} \Big( |\nabla R_hu|^2 + V |R_h u|^2 + \beta |R_hu|^4 \Big)\ \text{d}x.
\end{equation*}
Then we obtain
\begin{align*}
		|\mu[u] - A | & \le \left| \int_{\Omega}  (|\nabla u|^2 - |\nabla R_hu|^2)\  \text{d} x \right| + \|V\|_{L^{2}(\Omega)}\|u+R_hu\|_{L^{\infty}(\Omega)}\|u-R_hu\|_{L^2(\Omega)} \\
		& \quad + \beta \|u^2 + (R_hu)^2\|_{L^3(\Omega)}\|u+R_hu\|_{L^6(\Omega)}\|u-R_hu\|_{L^2(\Omega)} \\
		& =: I_1 + I_2 + I_3.
	\end{align*}
Properties of the Ritz projection show that
	\begin{align*}
		I_2 & \le C_1 h^2  , \qquad I_3 \le  C_2 h^2,
	\end{align*}
    here constants $C_1$ and $C_2$ depend on $\|u\|_{H^2(\Omega)}$.
	For $I_1$, we get
\begin{align*}
		I_1 & = \left| \int_{\Omega} (\nabla u + \nabla R_hu)\cdot(\nabla u - \nabla R_hu) \ \text{d} x \right| = \left| \int_{\Omega}   \nabla u (\nabla u - \nabla R_hu) \  \text{d} x \right| \\
		& = \left|\int_{\Omega} \Delta u (u -  R_hu) \ \text{d} x\right| \le C_3h^2,
	\end{align*}
    with $C_3$ depends on $\|u\|_{H^2(\Omega)}$.
    
	The left is the estimate of $\left|A - \mu[\widehat{R_hu}]\right|$, 
	\begin{align*}
		\left|A - \mu[\widehat{R_hu}]\right| & \le \left| 1 - \frac{1}{\|R_hu\|_{L^2(\Omega)}^2} \right|\left| \int_{\Omega} \Big(  |\nabla R_hu|^2 + V |R_hu|^2 \Big) \ \text{d} x\right| \\
		& \quad  + \left| 1 - \frac{1}{\|R_hu\|_{L^2(\Omega)}^4} \right| \left| \int_{\Omega} \beta |R_hu|^4 \ \text{d} x\right| \\
		& \le C_4h^2,
	\end{align*}
	here $C_4$ depends on $\|u\|_{H^2(\Omega)}$. Combining the above estimates, we complete the proof.
\end{proof}

\subsection{Proof of Lemma \ref{lem: key lemma in geometry}}

\begin{proof}
	Denote
    \begin{align*}
    \alpha=\arccos\frac{(R_h\phi(t_n),\phi_h^n)_{L^2(\Omega)}}{\|R_h\phi(t_n)\|_{L^2(\Omega)}\|\phi_h^n\|_{L^2(\Omega)}}
    \end{align*}
    be the angle between vectors $R_h\phi(t_n)$ and $\phi_h^n$, then for $\alpha > \pi/6$, we have $\|\tilde{e}_h^n\|_{L^2(\Omega)} \ge 1/2$, with the fact $\|e_h^n\|_{L^2(\Omega)} \le 2$, we can complete the proof of \eqref{equ: key lemma in geometry 1}--\eqref{equ: key lemma in geometry 2} for the case $\alpha > \pi/6$. 
	
	For the case $0 \le \alpha \le \pi/6$, we have $\|\tilde{e}_h^n\|_{L^2(\Omega)} \ge \sin\alpha$, and 
	\begin{align*}
		\|e_h^n\|_{L^2(\Omega)} = 2\sin(\alpha/2) \le \alpha \le C \sin\alpha \le C \|\tilde{e}_h^n\|_{L^2(\Omega)}.
	\end{align*}
	Meanwhile, 
	\begin{align*}
		\|e_h^n\|_{L^2(\Omega)} - \|\tilde{e}_h^n\|_{L^2(\Omega)} &  \le \|e_h^n\|_{L^2(\Omega)} - \|e_h^n\|_{L^2(\Omega)}\cos(\alpha/2) \\
		& = \|e_h^n\|_{L^2(\Omega)} 2\sin^2(\alpha/4) \le C \|\tilde{e}_h^n\|_{L^2(\Omega)}^3,
	\end{align*}
	which completes the proof.
\end{proof}

\bibliographystyle{siamplain}
\bibliography{references}

\begin{thebibliography}{10}

\bibitem{aftalion2001vortices}
{\sc A.~Aftalion and Q.~Du}, {\em Vortices in a rotating {B}ose--{E}instein condensate: {C}ritical angular velocities and energy diagrams in the {T}homas-{F}ermi regime}, Physical Review A, 64 (2001), 063603.

\bibitem{altmann2021j}
{\sc R.~Altmann, P.~Henning, and D.~Peterseim}, {\em The {J}-method for the {G}ross--{P}itaevskii eigenvalue problem}, Numerische Mathematik, 148 (2021), pp.~575--610.

\bibitem{altmann2022energy}
{\sc R.~Altmann, D.~Peterseim, and T.~Stykel}, {\em Energy-adaptive {R}iemannian optimization on the {S}tiefel manifold}, ESAIM: Mathematical Modelling and Numerical Analysis, 56 (2022), pp.~1629--1653.

\bibitem{altmann2024riemanniannewton}
{\sc R.~Altmann, D.~Peterseim, and T.~Stykel}, {\em Riemannian {N}ewton methods for energy minimization problems of {K}ohn--{S}ham type}, Journal of Scientific Computing, 101 (2024), Article No. 6.

\bibitem{anderson1995observation}
{\sc M.~H. Anderson, J.~R. Ensher, M.~R. Matthews, C.~E. Wieman, and E.~A. Cornell}, {\em Observation of {B}ose--{E}instein condensation in a dilute atomic vapor}, Science, 269 (1995), pp.~198--201.

\bibitem{antoine2017efficient}
{\sc X.~Antoine, A.~Levitt, and Q.~Tang}, {\em Efficient spectral computation of the stationary states of rotating {B}ose--{E}instein condensates by preconditioned nonlinear conjugate gradient methods}, Journal of Computational Physics, 343 (2017), pp.~92--109.

\bibitem{antonelli2024existence}
{\sc P.~Antonelli, P.~Cannarsa, and B.~Shakarov}, {\em Existence and asymptotic behavior for ${L}^{2}$-norm preserving nonlinear heat equations}, Calculus of Variations and Partial Differential Equations, 63 (2024), pp.~1--31.

\bibitem{BAILLEUL20163344}
{\sc I.~Bailleul and F.~Bernicot}, {\em Heat semigroup and singular {PDEs}}, Journal of Functional Analysis, 270 (2016), pp.~3344--3452.

\bibitem{bao2013mathematical}
{\sc W.~Bao and Y.~Cai}, {\em Mathematical theory and numerical methods for {B}ose--{E}instein condensation}, Kinetic and Related Models, 6 (2013), pp.~1--135.

\bibitem{bao2006efficient}
{\sc W.~Bao, I.-L. Chern, and F.~Y. Lim}, {\em Efficient and spectrally accurate numerical methods for computing ground and first excited states in {B}ose--{E}instein condensates}, Journal of Computational Physics, 219 (2006), pp.~836--854.

\bibitem{bao2004computing}
{\sc W.~Bao and Q.~Du}, {\em Computing the ground state solution of {B}ose--{E}instein condensates by a normalized gradient flow}, SIAM Journal on Scientific Computing, 25 (2004), pp.~1674--1697.

\bibitem{bao2003ground}
{\sc W.~Bao and W.~Tang}, {\em Ground-state solution of {B}ose--{E}instein condensate by directly minimizing the energy functional}, Journal of Computational Physics, 187 (2003), pp.~230--254.

\bibitem{bao2007mass}
{\sc W.~Bao and H.~Wang}, {\em A mass and magnetization conservative and energy-diminishing numerical method for computing ground state of spin-1 {B}ose--{E}instein condensates}, SIAM Journal on Numerical Analysis, 45 (2007), pp.~2177--2200.

\bibitem{Bose1924}
{\sc S.~Bose}, {\em Plancks {G}esetz und {L}ichtquantenhypothese}, Zeitschrift f\"ur Physik, 26 (1924), pp.~178--181.

\bibitem{brenner2008mathematical}
{\sc S.~C. Brenner and L.~R. Scott}, {\em The {M}athematical {T}heory of {F}inite {E}lement {M}ethods}, vol.~15 of Texts in Applied Mathematics, Springer, New York, third~ed., 2008.

\bibitem{cances2000SCF}
{\sc E.~Canc{\`e}s}, {\em {SCF} algorithms for {HF} electronic calculations}, in Mathematical {M}odels and {M}ethods for {A}b {I}nitio {Q}uantum {C}hemistry, vol. 74 of Lecture Notes in Chemistry, Springer, Berlin, 2000, pp.~17--43.

\bibitem{cances2010numerical}
{\sc E.~Canc{\`e}s, R.~Chakir, and Y.~Maday}, {\em Numerical analysis of nonlinear eigenvalue problems}, Journal of Scientific Computing, 45 (2010), pp.~90--117.

\bibitem{cances2021convergence}
{\sc E.~Canc\`{e}s, G.~Kemlin, and A.~Levitt}, {\em Convergence analysis of direct minimization and self-consistent iterations}, SIAM Journal on Matrix Analysis and Applications, 42 (2021), pp.~243--274.

\bibitem{cances2000on}
{\sc E.~Canc{\`e}s and C.~Le~Bris}, {\em On the convergence of {SCF} algorithms for the {H}artree--{F}ock equations}, ESAIM: Mathematical Modelling and Numerical Analysis, 34 (2000), pp.~749--774.

\bibitem{cerimele2000numerical}
{\sc M.~M. Cerimele, M.~L. Chiofalo, F.~Pistella, S.~Succi, and M.~P. Tosi}, {\em Numerical solution of the {G}ross--{P}itaevskii equation using an explicit finite-difference scheme: {A}n application to trapped {B}ose--{E}instein condensates}, Physical Review E, 62 (2000), pp.~1382--1389.

\bibitem{chen20111846}
{\sc H.~Chen, L.~He, and A.~Zhou}, {\em Finite element approximations of nonlinear eigenvalue problems in quantum physics}, Computer Methods in Applied Mechanics and Engineering, 200 (2011), pp.~1846--1865.

\bibitem{chen2024convergence}
{\sc Z.~Chen, J.~Lu, Y.~Lu, and X.~Zhang}, {\em On the convergence of {S}obolev gradient flow for the {G}ross--{P}itaevskii eigenvalue problem}, SIAM Journal on Numerical Analysis, 62 (2024), pp.~667--691.

\bibitem{chen2024fullydiscretizedsobolevgradient}
{\sc Z.~Chen, J.~Lu, Y.~Lu, and X.~Zhang}, {\em Fully discretized {S}obolev gradient flow for the {G}ross--{P}itaevskii eigenvalue problem}, Mathematics of Computation, 94 (2025), pp.~2723--2760.

\bibitem{chiofalo2000ground}
{\sc M.~L. Chiofalo, S.~Succi, and M.~Tosi}, {\em Ground state of trapped interacting {B}ose--{E}instein condensates by an explicit imaginary-time algorithm}, Physical Review E, 62 (2000), pp.~7438--7444.

\bibitem{danaila2010new}
{\sc I.~Danaila and P.~Kazemi}, {\em A new {S}obolev gradient method for direct minimization of the {G}ross--{P}itaevskii energy with rotation}, SIAM Journal on Scientific Computing, 32 (2010), pp.~2447--2467.

\bibitem{danaila2017computation}
{\sc I.~Danaila and B.~Protas}, {\em Computation of ground states of the {G}ross--{P}itaevskii functional via {R}iemannian optimization}, SIAM Journal on Scientific Computing, 39 (2017), pp.~B1102--B1129.

\bibitem{davis1995bose}
{\sc K.~B. Davis, M.~O. Mewes, M.~R. Andrews, N.~J. van Druten, D.~S. Durfee, D.~M. Kurn, and W.~Ketterle}, {\em Bose--{E}instein condensation in a gas of sodium atoms}, Physical Review Letters, 75 (1995), pp.~3969--3973.

\bibitem{dion2007ground}
{\sc C.~M. Dion and E.~Canc\`{e}s}, {\em Ground state of the time-independent {G}ross–-{P}itaevskii equation}, Computer Physics Communications, 177 (2007), pp.~787--798.

\bibitem{dusson2017posteriori}
{\sc G.~Dusson and Y.~Maday}, {\em A posteriori analysis of a nonlinear {G}ross--{P}itaevskii-type eigenvalue problem}, IMA Journal of Numerical Analysis, 37 (2017), pp.~94--137.

\bibitem{Einstein1924}
{\sc A.~Einstein}, {\em Quantentheorie des einatomigen idealen {G}ases}, Sitzber. Kgl. Preuss. Akad. Wiss.,  (1924), pp.~261--267.

\bibitem{evans2010partial}
{\sc L.~C. Evans}, {\em Partial {D}ifferential {E}quations}, vol.~19 of Graduate Studies in Mathematics, American Mathematical Society, second~ed., 2010.

\bibitem{faou2018convergence}
{\sc E.~Faou and T.~J\'{e}z\'{e}quel}, {\em Convergence of a normalized gradient algorithm for computing ground states}, IMA Journal of Numerical Analysis, 38 (2018), pp.~360--376.

\bibitem{gilbarg2001elliptic}
{\sc D.~Gilbarg and N.~S. Trudinger}, {\em Elliptic {P}artial {D}ifferential {E}quations of {S}econd {O}rder}, Classics in Mathematics, Springer-Verlag, Berlin, 2001.
\newblock Reprint of the 1998 edition.

\bibitem{gross1961structure}
{\sc E.~P. Gross}, {\em Structure of a quantized vortex in boson systems}, Il Nuovo Cimento (1955-1965), 20 (1961), pp.~454--477.

\bibitem{heid2021gradient}
{\sc P.~Heid, B.~Stamm, and T.~P. Wihler}, {\em Gradient flow finite element discretizations with energy-based adaptivity for the {G}ross--{P}itaevskii equation}, Journal of Computational Physics, 436 (2021), 110165.

\bibitem{henning2023dependency}
{\sc P.~Henning}, {\em The dependency of spectral gaps on the convergence of the inverse iteration for a nonlinear eigenvector problem}, Mathematical Models and Methods in Applied Sciences, 33 (2023), pp.~1517--1544.

\bibitem{henning2025gross}
{\sc P.~Henning and E.~Jarlebring}, {\em The {G}ross--{P}itaevskii equation and eigenvector nonlinearities: {N}umerical methods and algorithms}, SIAM Review, 67 (2025), pp.~256--317.

\bibitem{henning2020sobolev}
{\sc P.~Henning and D.~Peterseim}, {\em Sobolev gradient flow for the {G}ross--{P}itaevskii eigenvalue problem: Global convergence and computational efficiency}, SIAM Journal on Numerical Analysis, 58 (2020), pp.~1744--1772.

\bibitem{henning2025discrete}
{\sc P.~Henning and M.~Yadav}, {\em On discrete ground states of rotating {B}ose--{E}instein condensates}, Mathematics of Computation, 94 (2025), pp.~1--32.

\bibitem{heywood1990finite}
{\sc J.~G. Heywood and R.~Rannacher}, {\em Finite-element approximation of the nonstationary {N}avier--{S}tokes problem. {P}art \emph{IV}: {E}rror analysis for second-order time discretization}, SIAM Journal on Numerical Analysis, 27 (1990), pp.~353--384.

\bibitem{jarlebring2014inverse}
{\sc E.~Jarlebring, S.~Kvaal, and W.~Michiels}, {\em An inverse iteration method for eigenvalue problems with eigenvector nonlinearities}, SIAM Journal on Scientific Computing, 36 (2014), pp.~A1978--A2001.

\bibitem{Jost2013}
{\sc J.~Jost}, {\em The Heat Equation, Semigroups, and {Brownian} Motion}, Springer New York, New York, NY, 2013, pp.~173--206.

\bibitem{kazemi2010minimizing}
{\sc P.~Kazemi and M.~Eckart}, {\em Minimizing the {G}ross--{P}itaevskii energy functional with the {S}obolev gradient--{A}nalytical and numerical results}, International Journal of Computational Methods, 7 (2010), pp.~453--475.

\bibitem{liu2021normalized}
{\sc W.~Liu and Y.~Cai}, {\em Normalized gradient flow with {L}agrange multiplier for computing ground states of {B}ose--{E}instein condensates}, SIAM Journal on Scientific Computing, 43 (2021), pp.~B219--B242.

\bibitem{PHG}
{\sc {PHG}}, \url{http://lsec.cc.ac.cn}.

\bibitem{pitaevskii1961vortex}
{\sc L.~P. Pitaevskii}, {\em Vortex lines in an imperfect {B}ose gas}, Sov. Phys. JETP, 13 (1961), pp.~451--454.

\bibitem{upadhyaya2021density}
{\sc P.~Upadhyaya, E.~Jarlebring, and E.~H. Rubensson}, {\em A density matrix approach to the convergence of the self-consistent field iteration}, Numerical Algebra, Control and Optimization, 11 (2021), pp.~99--115.

\bibitem{wang2014projection}
{\sc H.~Wang}, {\em A projection gradient method for computing ground state of spin-$2$ {B}ose--{E}instein condensates}, Journal of Computational Physics, 274 (2014), pp.~473--488.

\bibitem{wu2017regularized}
{\sc X.~Wu, Z.~Wen, and W.~Bao}, {\em A regularized {N}ewton method for computing ground states of {B}ose--{E}instein condensates}, Journal of Scientific Computing, 73 (2017), pp.~303--329.

\bibitem{zeidler1985nonlinear}
{\sc E.~Zeidler}, {\em Nonlinear {F}unctional {A}nalysis and {I}ts {A}pplications. \emph{\uppercase\expandafter{\romannumeral 3}}: {V}ariational {M}ethods and {O}ptimization}, Springer-Verlag, New York,, 1985.

\bibitem{zhang2022exponential}
{\sc Z.~Zhang}, {\em Exponential convergence of {S}obolev gradient descent for a class of nonlinear eigenvalue problems}, Communications in Mathematical Sciences, 20 (2022), pp.~377--403.

\bibitem{zhou2003analysis}
{\sc A.~Zhou}, {\em An analysis of finite-dimensional approximations for the ground state solution of {B}ose--{E}instein condensates}, Nonlinearity, 17 (2003), pp.~541--550.

\bibitem{zhuang2019efficient}
{\sc Q.~Zhuang and J.~Shen}, {\em Efficient {SAV} approach for imaginary time gradient flows with applications to one- and multi-component {B}ose--{E}instein condensates}, Journal of Computational Physics, 396 (2019), pp.~72--88.

\end{thebibliography}

\end{document}